\documentclass[10pt]{amsart}
\usepackage[leqno]{amsmath}
\usepackage{amssymb,latexsym,soul,cite,amsthm,color,enumitem,graphicx,mathtools,microtype,accents, esint, tikz}
\usepackage[colorlinks=true,urlcolor=cobalt,citecolor=cobalt,linkcolor=cobalt,linktocpage,pdfpagelabels,bookmarksnumbered,bookmarksopen]{hyperref}
\definecolor{cobalt}{rgb}{0.0, 0.28, 0.67}
\usepackage[english]{babel}
\usepackage[left=2.5cm,right=2.5cm,top=2.5cm,bottom=2.5cm]{geometry}

\numberwithin{equation}{section}

\newtheorem{theorem}{Theorem}[section]
\theoremstyle{plain}
\newtheorem{lemma}[theorem]{Lemma}
\theoremstyle{plain}
\newtheorem{proposition}[theorem]{Proposition}
\theoremstyle{plain}
\newtheorem{corollary}[theorem]{Corollary}
\newtheorem{definition}[theorem]{Definition}
\theoremstyle{definition}
\newtheorem{remark}[theorem]{Remark}

\newcommand{\N}{{\mathbb N}}

\newcommand{\R}{{\mathbb R}}
\newcommand{\eps}{\varepsilon}
\newcommand{\beq}{\begin{equation}}
\newcommand{\eeq}{\end{equation}}
\renewcommand{\le}{\leqslant}
\renewcommand{\ge}{\geqslant}

\def\Xint#1{\mathchoice
{\XXint\displaystyle\textstyle{#1}}%
{\XXint\textstyle\scriptstyle{#1}}%
{\XXint\scriptstyle\scriptscriptstyle{#1}}%
{\XXint\scriptscriptstyle\scriptscriptstyle{#1}}%
\!\int}
\def\XXint#1#2#3{{\setbox0=\hbox{$#1{#2#3}{\int}$ }
\vcenter{\hbox{$#2#3$ }}\kern-.6\wd0}}
\def\fint{\Xint-}

\def\Yint#1{\mathchoice
    {\YYint\displaystyle\textstyle{#1}}%
    {\YYint\textstyle\scriptstyle{#1}}%
    {\YYint\scriptstyle\scriptscriptstyle{#1}}%
    {\YYint\scriptscriptstyle\scriptscriptstyle{#1}}%
      \!\iint}
\def\YYint#1#2#3{{\setbox0=\hbox{$#1{#2#3}{\iint}$}
    \vcenter{\hbox{$#2#3$}}\kern-.51\wd0}}
\def\longdash{{-}\mkern-3.5mu{-}} 

\def\fiint{\Yint\longdash}

\makeatletter
\newcommand{\leqnomode}{\tagsleft@true}
\newcommand{\reqnomode}{\tagsleft@false}
\makeatother

\newenvironment{enumroman}{\begin{enumerate}

}{\end{enumerate}}

\title[Singular Fractional $p$-Laplacian]{Integral Harnack estimates and the rate of extinction of singular fractional diffusion}

\author[F.M.\ Cassanello, S.\ Ciani, A.\ Iannizzotto]{Filippo Maria Cassanello, Simone Ciani, Antonio Iannizzotto}

\address[F.M.\ Cassanello, A.\ Iannizzotto]{Dipartimento di Matematica e Informatica
\newline\indent
Universit\`a degli Studi di Cagliari
\newline\indent
Via Ospedale 72, 09124 Cagliari, Italy}
\email{filippom.cassanello@unica.it}
\email{antonio.iannizzotto@unica.it}

\address[S.\ Ciani]{Dipartimento di Matematica
\newline\indent
Universit\`a di Bologna
\newline\indent
Piazza di Porta San Donato 5, Bologna, Italy}
\email{simone.ciani3@unibo.it}

\subjclass[2020]{35K67, 35B65, 35K92, 35Q35..}
\keywords{Integral Harnack-type inequality, $L^1$-$L^1$ Harnack tipe inequality, Fractional $p$-Laplacian, Rate of Extinction, Decay.}

\begin{document}

\begin{abstract}
We prove several integral Harnack-type inequalities for local weak solutions of parabolic equations with measurable and bounded coefficients, describing singular $s$-fractional $p$-Laplacian diffusion. Then we apply such estimates to evaluate the decay rate of the local mass and supremum of the solutions as they approach a possible extinction time. Yet we show consistency of our general decay estimates by studying the extinction phenomenon for weak solutions of the Cauchy-Dirichlet problem, by means of an approximation procedure that carefully avoids the use of an integrable time derivative.
\end{abstract}

\maketitle

\section{Introduction and main results}\label{sec1}

\noindent \subsection{Heuristics} \label{heuristics} When describing the flow of a non-Newtonian fluid in a simple situation (as in a pipe), the momentum balance law written for a power-like stress tensor can prescribe a dissipation of energy that distinguishes between {\it dilatant} fluids, which starting still, stay immobile until a time $T^*$ and {\it pseudoplastic} fluids, that become immobile after a finite time $T^*$ has passed (see \cite{ADS} Ch.IV Section 7.6). We are here interested in the latter phenomenon (as opposite to the former one), which we rephrase under the more general principle of extinction of a diffusion process after a finite time. As anticipated, this principle is a consequence of the dissipation of energy involved in the evolutive process, that in general can be supplied by a particular source or, as in our case, by the unbalance between the energy of the propagation (parabolic energy terms with power-growth $\approx 2$) and the one of the diffusion (elliptic energy terms with a slower power-growth $\approx p<2$). We refer to the classic books \cite{ADS}, \cite{AS} for a presentation of energy methods for the study of the localization of the solutions to parabolic equations.
\vskip2pt 
\noindent
When the aim is the description of materials with memory or media that exhibit long-range elastic or plastic deformation, these models account for the fact that stresses or strains at one point in a material can be influenced by other regions over a nonlocal range, rather than just the local neighborhood (see for instance \cite{Phys1}, \cite{Caffa1}, \cite{Caffa-Annals}, \cite{Phys3}, \cite{Phys2}, \cite{Vaz-appl} and references therein) and the diffusion is termed {\em nonlocal}. It is the precise scope of this work to address the study of the interplay between local and nonlocal effects caused by the phenomenon of extinction and the regularity properties of weak solutions of diffusion processes described, in particular, by the fractional $p$-Laplacian equation
\begin{equation} \label{eq0}
u_t (x,t)= \int_{\R^N} \frac{|u(y,t)-u(x,t)|^{p-2}(u(y,t)-u(x,t))}{|x-y|^{N+sp}} \mu(x,y,t) \,dy\,,\qquad x \in \Omega, \  t\in [0,T],
\end{equation} \noindent
where $p\in(1,2)$, $s\in(01)$, and $\mu$ is a measurable bounded function, prescribed by the anisotropy of the medium and reflecting the impossibility to measure the properties described by the model without interfering with themselves (see \cite{DBGV-mono} discussion at 3.1 Ch.I). 

\subsection{Framing of the Topic and Main Results}\label{framing} We consider the following parabolic nonlinear, nonlocal equation
\beq\label{eq}
u_t+\mathcal{L}_Ku = 0,
\eeq
in the cylindrical set $\Omega_T=\Omega \times [0,T]$, with $\Omega \subset \R^N$ open ($N\ge 2$). The diffusion operator $\mathcal{L}_K$ is formally defined by
\[\mathcal{L}_Ku(x,t) = 2\lim_{\eps\to 0^+}\int_{B_\eps^c(x)}|u(x,t)-u(y,t)|^{p-2}(u(x,t)-u(y,t))K(x,y,t)\,dy,\]
where $p\in(1,2)$, $s\in(0,1)$, and $K:\R^N\times\R^N\times(0,T)\to\R$ is a measurable function satisfying, for constants $0<C_1\le C_2$, the following structural properties almost everywhere:
\begin{itemize}[leftmargin=1cm]
\item[$(K_1)$] $K(x,y,t)=K(y,x,t)$; \vskip0.2cm \noindent
\item[$(K_2)$] $C_1\le K(x,y,t)|x-y|^{N+ps}\le C_2$ \,.
\end{itemize} We remark that when  $C_1=C_2=1$, the operator $\mathcal{L}_K$ reduces to the prototype $s$-fractional $p$-Laplacian 
\[(-\Delta)_p^s u(x,t) = 2\lim_{\eps\to 0^+}\int_{B_\eps^c(x)}\frac{|u(x,t)-u(y,t)|^{p-2}(u(x,t)-u(y,t))}{|x-y|^{N+ps}}\,dy.\]
For the precise definition of solution adopted we refer to Section \ref{sec2}. For some related results in the elliptic framework, see \cite{CDI}, \cite{DIV} and the survey paper \cite{I}. Our choice of $p\in(1,2)$ qualifies the diffusion operator as {\em singular}, since when $u(x,t)=u(y,t)$ the elliptic term of the diffusion dominates the process. The former quality of the operator has significant consequences on the properties of the solutions. Indeed, consider the associated Cauchy-Dirichlet problem:
\beq\label{cdp}
\begin{cases}
u_t+\mathcal{L}_Ku = 0 & \text{in $\Omega_T$} \\
u = 0 & \text{in $\Omega^c\times(0,T)$} \\
u(\cdot,0) = u_0 & \text{in $\R^N$},
\end{cases}
\eeq
with $\Omega$ bounded and initial datum $u_0\in W^{s,p}_0(\Omega)$, i.e., $u_0\in W^{s,p}(\R^N)$ and $u_0=0$ in $\Omega^c$ (see Section \ref{sec2} for details). We will prove that (global) weak solutions to \eqref{cdp} extinguish within a time $T_*$ that depends on some $L^q$ norm of $u_0$ (see Theorem \ref{fte} for the precise statement), while local\footnote{Meaning with this, weak solutions that are irrespective of any boundary or initial datum, see Section \ref{sec2} for the precise definition.} weak solutions satisfy an integral Harnack-type inequality such as, for fixed $\rho, t>0$,
\[ {\gamma}^{-1} \sup_{0<\tau<t} \int_{B_{\rho}} u(x,\tau)\, d x \leq \inf_{0<\tau<t} \int_{B_{2\rho}} u(x,\tau)\, dx + \bigg(\frac{t}{\rho^{\lambda_1}} \bigg)^{\frac{1}{2-p}} +\mathcal{T},\qquad \qquad  \gamma>1,\]
where the second term on the right-hand side takes into account the possible global effects in time, while the term $\mathcal{T}$ involves the long-range values of the solution in space, through the quantity 
\beq\label{tail}
{\rm Tail}\big(u,x_0,\rho,t_1,t_2\big) = \sup_{t_1<\tau<t_2}\,\Big[\rho^{ps}\int_{B_\rho^c(x_0)}\frac{|u(x,\tau)|^{p-1}}{|x-x_0|^{N+ps}}\,dx\Big]^\frac{1}{p-1},
\eeq
for fixed $0<t_1 \leq t_2\leq T$, $x_0 \in \Omega$, that we refer to as the {\em nonlocal tail} of $u:\R^N\times(0,T)\to\R$ (see for instance~\cite{DKP14, DKP16} for the origin of the term, and \cite{Liao1} for an alternative definition of tail and its consequences).
\vskip2pt
\noindent
Both the property of extinction and this integral Harnack-type inequality are a consequence of the fact that the operator is {\em singular}, paralleling the description of the pseudoplastic fluids of Section \ref{heuristics}. We address the previous integral Harnack inequality, following the terminology of \cite{DB}, as {\em an $L^1$-$L^1$ Harnack inequality}, since it is an Harnack-type estimate for the function $t \rightarrow \|u(\cdot, t)\|_{L^1(B_{\rho})}$. Another peculiarity of the range $1<p<2$ is the fact that local weak solutions are not necessarily locally bounded (see \cite{DB} Ch. V for a comparison with the $p$-Laplacian case). A byproduct of our analysis shows with a quantitative estimate that, if $p,s$ satisfy the following relation
\begin{equation}\label{critical}
   \frac{2N}{N+2s}=:p_c<p<2,
\end{equation} \noindent then local weak solutions to \eqref{eq} are locally bounded, provided that the tail terms are adequately controlled (see Remark \ref{RMK-Bundedness} below for the details). Therefore we address the exponent $p_{c}$ as {\em critical}, and we will say that the equation \eqref{eq} lies in the {\em subcritical regime} if $1<p<p_c$, and in the {\em supercritical regime} if $p_c<p<2$, respectively. Connecting the $L^1$-$L^1$ Harnack inequality with the estimates for the local boundedness (Theorem \ref{lr}), we show some very useful $L^1$-$L^{\infty}$ Harnack-type estimates (Theorem \ref{linf}) that can be used directly to check the rate of decay toward extinction, and show that the behaviour of local weak solutions is, roughly speaking, dictated by the rule
\[\gamma^{-1} \|u(\cdot, \tau)\|_{L^{\infty}(B_{\rho})} \leq  \bigg( \inf_{[0,\tau]}\|u(\cdot, \tau)\|_{L^{1}(B_{2\rho})} \bigg)^{\frac{ps}{\lambda_1}} \tau^{-\frac{N}{\lambda_1}}+ \bigg( \frac{\tau}{\rho^{ps}}\bigg)^{\frac{1}{2-p}} +\hat{\mathcal{T}},\]
where again $\hat{\mathcal{T}}$ is a perturbation term involving the long-range values of the solution in space and $\lambda_1= N(p-2)+ps$ is the fractional Barenblatt number (see \cite{Vaz1} Thm 1.1).

\vskip2pt
\noindent
We now present our main results. For all $r\ge 1$ we define 
\[\lambda_r = N(p-2)+rps,\]
and we note that $p_c=2N/(N+2s)$ implies $\lambda_2>0$ for all $p>p_c$. In order to ease notation, say that a constant $\gamma$ depends only on the data if it depends only on $\{N,p,s,C_1,C_2\}$, where $C_1,C_2$ are the constants appearing in $(K_2)$.
\vskip2pt
\noindent 
Our first result states that, if $u$ is a locally bounded, local weak solution of \eqref{eq}, then the supremum of $u$ in a cylinder is controlled by the average of $u^r$ in a larger cylinder plus a perturbative term that depends on the tail and on the ratio $t/\rho^{ps}$, which accounts for long-range behavior with respect to time.

\begin{theorem}\label{lr}
{\rm ($L^r$-$L^\infty$ estimate)} Let $r\ge 1$ be such that $\lambda_r>0$, with $1\leq r<2$ when $p>p_c$. If $u$ is a locally bounded solution of \eqref{eq}, non-negative in $B_{4\rho}(x_0)\times(0,t)\subset\Omega_T$, then there exists $\gamma>0$ depending on the data and $r$, s.t.
\begin{equation*}
\begin{aligned}
 \sup_{B_{\rho/2}(x_0)\times(t/2,t)}\,u \le &\gamma\Big[\fiint_{B_\rho(x_0)\times(0,t)}u^r(x,\tau)\,dx\,d\tau\Big]^\frac{ps}{\lambda_r}\Big(\frac{t}{\rho^{ps}}\Big)^{-\frac{N}{\lambda_r}}+ \\
&+ \gamma\Big(\frac{t}{\rho^{ps}}\Big)^\frac{1}{2-p}\max\Big\{1,\,\Big(\frac{t}{\rho^{ps}}\Big)^\frac{1-p}{2-p}{\rm Tail}\Big(u_+,x_0,\frac{\rho}{2},0,t\Big)^{p-1}\Big\}.
\end{aligned}
\end{equation*}
Moreover, $\gamma$ blows up when $\lambda_r$ vanishes.
\end{theorem}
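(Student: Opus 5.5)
We follow the classical DiBenedetto scheme for singular equations (see \cite{DB}, Ch.~V), adapted to the nonlocal setting. The plan has three stages: a De Giorgi iteration on shrinking cylinders, an interpolation that absorbs the $L^\infty$ norm, and a Giaquinta-type iteration lemma that removes the absorbed term.

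\textbf{Step 1: energy estimates.} Fix nested radii $\rho/2\le\rho_{n+1}<\rho_n\le\rho$ and times $t/2\ge\theta_n>\theta_{n+1}\ge0$, with cylinders $Q_n=B_{\rho_n}(x_0)\times(\theta_n,t)$. Test the equation with $(u-k)_+\zeta^p$, where $\zeta$ is a cutoff in space and time. This gives a Caccioppoli inequality that controls
\[
\sup_\tau\int (u-k)_+^2\zeta^p + \iint\!\!\iint \frac{|w\zeta(x)-w\zeta(y)|^p}{|x-y|^{N+ps}}
\]
in terms of the following quantities:
\begin{itemize}[leftmargin=1cm]
\item the term $\iint (u-k)_+^p|\nabla\zeta|^p$-type factors of size $(\rho_n-\rho_{n+1})^{-ps}$;
\item the time term $\iint (u-k)_+^2|\partial_t\zeta|$;
\item a tail term $\sup_\tau\int_{B^c}(u-k)_+^{p-1}/|x-x_0|^{N+ps}$ multiplied by $\iint (u-k)_+$.
\end{itemize}
Since $u\ge0$ only in $B_{4\rho}$, the tail is bounded by $\rho^{-ps}2^{n(N+ps)}\,{\rm Tail}(u_+,x_0,\rho/2,0,t)^{p-1}$.

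\textbf{Step 2: De Giorgi iteration.} Combine Step 1 with the fractional parabolic Sobolev embedding and levels $k_n=k(1-2^{-n})$. Because $p<2$, the energy terms are inhomogeneous. Following the singular theory, we bound $(u-k)_+^p\le (u-k)_+^2\,k^{p-2}$ on the set $\{u>k_{n+1}\}$, or we rewrite the time term using $(u-k)_+^2\le M^{2-p}(u-k)_+^p$ with $M=\sup_{Q_0} u$. This yields a recursive inequality for $Y_n=|Q_n\cap\{u>k_n\}|/|Q_n|$ of the form $Y_{n+1}\le \gamma b^n\,(\ldots)\,Y_n^{1+\delta}$. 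Starting from $Y_0\le k^{-r}\fiint u^r$, we obtain a bound of the form
\[
\sup_{Q_\infty}u\le \gamma\Big(\frac{t}{\rho^{ps}}\Big)^{-\frac Nps\cdot\ldots}\Big(\fiint_{Q_0} u^{r}\Big)^{\ldots}M^{\alpha}+\gamma\Big(\frac{t}{\rho^{ps}}\Big)^{\frac1{2-p}}+\text{tail term},
\]
with some $\alpha<1$. The tail term enters by choosing $k$ at least of size $(t/\rho^{ps})^{1/(2-p)}$ times the bracketed max. Concretely, the constraint $k^{2-p}\gtrsim t\rho^{-ps}$ lets the time and elliptic terms balance, and the tail is absorbed when $k^{p-1}\gtrsim (t/\rho^{ps})\ldots{\rm Tail}^{p-1}$.

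For $r>1$ we first run the iteration with $r$ replaced by $2$, and then interpolate using $\fiint u^2\le M^{2-r}\fiint u^r$. This is where $r<2$ is needed when $p>p_c$, and where $\lambda_r>0$ ensures that the final exponent $\alpha$ of $M$ is strictly less than $1$. The exponent is $\alpha = 1-\lambda_r/(\text{something positive})$. Hence $\gamma\to\infty$ as $\lambda_r\to0$: Young's inequality with exponent $1/(1-\alpha)$ degenerates.

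\textbf{Step 3: absorbing the supremum.} Apply Step 2 between two arbitrary cylinders $B_\sigma\times(\ldots)$ and $B_{\sigma'}\times(\ldots)$ with $\rho/2\le\sigma<\sigma'\le\rho$. Use Young's inequality $\gamma(\sigma'-\sigma)^{-a}M(\sigma')^\alpha F\le \tfrac12 M(\sigma') + \gamma'(\sigma'-\sigma)^{-a/(1-\alpha)}F^{1/(1-\alpha)}$. The standard iteration lemma for functions $f(\sigma)\le\frac12f(\sigma')+A(\sigma'-\sigma)^{-b}+B$ then removes $M$. Computing $1/(1-\alpha)$ produces exactly the exponents $ps/\lambda_r$ and $-N/\lambda_r$, which is a dimensional check. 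Local boundedness guarantees that $M<\infty$, so the absorption is legitimate.

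\textbf{Main obstacle.} The delicate point is Step 2's bookkeeping of the nonlocal tail under shrinking cylinders. The tail must be taken with respect to $B_{\rho/2}$ and must not pick up powers of $M$; this works because $u_+$ is used outside while $u\ge0$ holds on $B_{4\rho}$. Equally delicate is choosing $k$ to reconcile the $p$-homogeneous and $2$-homogeneous terms so that the interpolation exponent stays below $1$.
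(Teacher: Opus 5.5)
\emph{The gap: how you bring in $r$ leaves the subcritical regime uncovered.} Your overall architecture is the paper's. A De Giorgi step gives $\sup_{Q_\infty}u\le\gamma M^{\alpha}(\cdots)+P$ with $\alpha<1$ exactly when $\lambda_r>0$, and then Young's inequality plus an iteration over expanding cylinders absorbs $M$; your lemma $f(\sigma)\le\frac12 f(\sigma')+\dots$ plays the role of the paper's $S_n\le\eps S_{n+1}+\dots$. The problem is your handling of $r$. For $r>1$ you run the iteration at $r=2$ and interpolate via $\fiint u^2\le M^{2-r}\fiint u^r$, which requires $r\le 2$. But the theorem also covers the subcritical regime $1<p\le p_c$. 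There $\lambda_r>0$ forces $r>N(2-p)/(ps)\ge 2$, and $r=1$ is unavailable since $\lambda_1>0$ already forces $p>p_c$. For such $r$ your interpolation inequality is false. The $r=2$ iteration is useless there anyway: $\lambda_2\le 0$ makes the exponent of $M$ at least $1$ (in your measure-based version it is $2(N+ps)/(pN+4ps)$), so nothing can be absorbed. Replacing interpolation by H\"older, $\fiint u^2\le(\fiint u^r)^{2/r}$, does not change that exponent. So as written the proposal proves the theorem only for $p>p_c$. The paper handles $p\le p_c$ with a separate Lemma \ref{sub}. It iterates $\iint_{Q_n}w_n^r$ and uses $\sup_{Q_0}u$ to lift integrability from the embedding exponent $p(N+2s)/N\le 2<r$ up to $r$, replacing the measure multiplier of Lemma \ref{sup}.

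\emph{A repair within your own setup, and how it compares.} Drop the interpolation and apply Chebyshev at power $r$ directly, with levels running from $k/2$ to $k$; with $k_0=0$ your bound $Y_0\le k^{-r}\fiint u^r$ is false. If $M\le k$ there is nothing to prove. If $M>k\ge P$, then $\iint w_n^2\le M^2|A_n|$ and $\iint w_n^p\le M^p|A_n|\le \rho^{ps}M^2|A_n|/t_0$, and the tail term is bounded the same way because $M>P\ge (t/\rho^{ps}){\rm Tail}^{p-1}$. So the energy is at most $\gamma b^nM^2|A_n|/t_0$. The embedding at exponent $p(N+2s)/N$ then gives $Y_{n+1}\le\gamma b^nk^{-p(N+2s)/N}M^{2(N+ps)/N}(t_0/\rho^{ps})^{-1}Y_n^{1+ps/N}$. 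Hence $\sup u\le\gamma M^\alpha[(\fiint u^r)^{ps}(t_0/\rho^{ps})^{-N}]^{1/(rps+pN+2ps)}+P$ with $1-\alpha=\lambda_r/(rps+pN+2ps)$, and Young returns exactly the exponents $ps/\lambda_r$ and $-N/\lambda_r$. This single argument covers every $p\in(1,2)$ and every $r\ge 1$ with $\lambda_r>0$, and the restriction $r<2$ plays no role. The paper instead needs two regime-dependent lemmas. The price is that $M$ is used from the start. You therefore lose the by-product of the paper's $L^2$-based Lemma \ref{sup}: for $p>p_c$, local boundedness follows without being assumed (Remark \ref{RMK-Bundedness}).
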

\noindent
For ease of presentation we prefer to state Theorem \ref{lr} for locally bounded solutions that are locally non-negative. Nevertheless, these two requirements are redundant, see Remarks \ref{RMK-Bundedness}, \ref{RMK-non-negative} for a more detailed analysis.
\vskip2pt
\noindent
Our second result compares the supremum of the integral of $u$ in a ball, over a time interval, to the infimum of the integral of $u$ in a larger ball.

\begin{theorem}\label{l1}
{\rm ($L^1$-$L^1$ estimate)} Let $u$ be a solution of \eqref{eq}, non-negative in $B_{4\rho}(x_0)\times(0,t)\subset\Omega_T$. Then, there exists $\gamma>0$ depending on the data s.t.\
\begin{align*}  \sup_{0<\tau<t}\,\int_{B_\rho(x_0)}u(x,\tau)\,dx \le& \gamma\Big[\inf_{0<\tau<t}\,\int_{B_{2\rho}(x_0)}u(x,\tau)\,dx\Big] \\
&+ \gamma\Big(\frac{t}{\rho^{\lambda_1}}\Big)^\frac{1}{2-p}\max\Big\{1,\,\Big(\frac{t}{\rho^{ps}}\Big)^\frac{1-p}{2-p}{\rm Tail}\Big(u_-,x_0,\frac{\rho}{2},0,t\Big)^{p-1}\Big\} \\
&+ \gamma\Big(\frac{t}{\rho^{\lambda_1}}\Big)^\frac{1}{2-p}\max\Big\{1,\,\Big(\frac{t}{\rho^{ps}}\Big)^\frac{1-p}{2-p}{\rm Tail}\Big(u_-,x_0,\frac{\rho}{2},0,t\Big)^{p-1}\Big\}^\frac{p-1}{2-p}.
\end{align*}
\end{theorem}

\noindent
Note that Theorem \ref{l1} does not require that the solution $u$ is locally bounded, as it only involves integrals of $u$; moreover, it is valid for every $1<p<2$. From Theorems \ref{lr} and \ref{l1} above, for the restricted supercritical regime
\[\frac{2N}{N+s}<p<2\]
(which ensures $\lambda_1>0$), we derive the following further result that relates the supremum of $u$ in a cylinder to the infimum of the integral of $u$ over a larger time interval.

\begin{theorem}\label{linf}
{\rm ($L^1$-$L^\infty$ estimate)} Let $\lambda_1>0$, let $u$ be a locally bounded solution of \eqref{eq} non-negative in $B_{4\rho}(x_0)\times(0,t)\subset\Omega_T$. Then, there exists $\gamma>0$ depending on the data s.t.\
\begin{align*}
\sup_{B_{\rho/2}(x_0)\times(t/2,t)}\,u &\le \gamma\Big[\inf_{0<\tau<t}\,\int_{B_{2\rho}(x_0)}u(x,\tau)\,dx\Big]^\frac{ps}{\lambda_1}t^{-\frac{N}{\lambda_1}} \\
&+ \gamma\Big(\frac{t}{\rho^{ps}}\Big)^\frac{1}{2-p}\max\Big\{1,\,\Big(\frac{t}{\rho^{ps}}\Big)^\frac{1-p}{2-p}{\rm Tail}\Big(u_+,x_0,\frac{\rho}{2},0,t\Big)^{p-1}\Big\} \\
&+ \gamma\Big(\frac{t}{\rho^{ps}}\Big)^\frac{1}{2-p}\max\Big\{1,\,\Big(\frac{t}{\rho^{ps}}\Big)^\frac{1-p}{2-p}{\rm Tail}\Big(u_-,x_0,\frac{\rho}{2},0,t\Big)^{p-1}\Big\}^\frac{ps}{\lambda_1} \\
&+ \gamma\Big(\frac{t}{\rho^{ps}}\Big)^\frac{1}{2-p}\max\Big\{1,\,\Big(\frac{t}{\rho^{ps}}\Big)^\frac{1-p}{2-p}{\rm Tail}\Big(u_-,x_0,\frac{\rho}{2},0,t\Big)^{p-1}\Big\}^\frac{(p-1)ps}{(2-p)\lambda_1}.
\end{align*}
\end{theorem}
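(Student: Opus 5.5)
Theorem \ref{linf} follows by combining Theorem \ref{lr} (with $r=1$) and Theorem \ref{l1}. Since $\lambda_1>0$, Theorem \ref{lr} applies with $r=1$ and gives
\[
\sup_{B_{\rho/2}(x_0)\times(t/2,t)} u \le \gamma\Big[\fiint_{B_\rho(x_0)\times(0,t)}u\,dx\,d\tau\Big]^{\frac{ps}{\lambda_1}}\Big(\frac{t}{\rho^{ps}}\Big)^{-\frac{N}{\lambda_1}} + \gamma\Big(\frac{t}{\rho^{ps}}\Big)^{\frac{1}{2-p}}\mathcal{M}_+,
\]
where $\mathcal{M}_\pm=\max\{1,(t/\rho^{ps})^{\frac{1-p}{2-p}}{\rm Tail}(u_\pm,x_0,\rho/2,0,t)^{p-1}\}$. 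The second term is already the second line of the claim. For the first term, we bound the space-time average by the supremum in time:
\[
\fiint_{B_\rho\times(0,t)}u \le |B_\rho|^{-1}\sup_{0<\tau<t}\int_{B_\rho(x_0)}u(x,\tau)\,dx .
\]
Theorem \ref{l1} then bounds this supremum by $\gamma\inf_\tau\int_{B_{2\rho}}u$ plus $\gamma(t/\rho^{\lambda_1})^{\frac{1}{2-p}}\big(\mathcal{M}_-+\mathcal{M}_-^{\frac{p-1}{2-p}}\big)$.

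Next we use the elementary inequality $(a+b+c)^{\theta}\le 3^{\theta}(a^\theta+b^\theta+c^\theta)$ with $\theta=ps/\lambda_1>0$ to split the power into three contributions and track the scaling in each.

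\emph{Main term.} It carries the factor $\rho^{-Nps/\lambda_1}(t/\rho^{ps})^{-N/\lambda_1}=\rho^{-Nps/\lambda_1+Nps/\lambda_1}t^{-N/\lambda_1}=t^{-N/\lambda_1}$. This gives exactly $[\inf\int_{B_{2\rho}}u]^{ps/\lambda_1}t^{-N/\lambda_1}$.

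\emph{Tail terms.} These contain the factor
\[
\Big[\rho^{-N}\Big(\frac{t}{\rho^{\lambda_1}}\Big)^{\frac{1}{2-p}}\Big]^{\frac{ps}{\lambda_1}}\Big(\frac{t}{\rho^{ps}}\Big)^{-\frac{N}{\lambda_1}}.
\]
Write $t/\rho^{\lambda_1}=(t/\rho^{ps})\rho^{ps-\lambda_1}$, with $ps-\lambda_1=N(2-p)$. Then $\rho^{-N}(t/\rho^{\lambda_1})^{\frac{1}{2-p}}=(t/\rho^{ps})^{\frac{1}{2-p}}$, so the factor equals
\[
\Big(\frac{t}{\rho^{ps}}\Big)^{\frac{ps}{(2-p)\lambda_1}-\frac{N}{\lambda_1}}=\Big(\frac{t}{\rho^{ps}}\Big)^{\frac{ps-N(2-p)}{(2-p)\lambda_1}}=\Big(\frac{t}{\rho^{ps}}\Big)^{\frac{1}{2-p}},
\]
since $ps-N(2-p)=\lambda_1$. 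The remaining factors are $\mathcal{M}_-^{ps/\lambda_1}$ and $\mathcal{M}_-^{\frac{(p-1)ps}{(2-p)\lambda_1}}$, which are exactly the third and fourth lines of the claim.

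The main obstacle is only bookkeeping: checking that the hypotheses of both theorems match. Nonnegativity must hold in $B_{4\rho}\times(0,t)$, and the tails must be taken at radius $\rho/2$. The exponent $r=1$ is admissible in Theorem \ref{lr} because $1\le r<2$ and $\lambda_1>0$, which also implies $p>p_c$.
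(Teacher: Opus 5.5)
Your proposal is correct and follows the paper's proof essentially step by step. You apply Theorem \ref{lr} with $r=1$, dominate the space-time average by $|B_\rho|^{-1}\sup_\tau\int_{B_\rho}u$, insert Theorem \ref{l1}, and split the $ps/\lambda_1$ power using the same scaling identity $\rho^{-N}(t/\rho^{\lambda_1})^{1/(2-p)}=(t/\rho^{ps})^{1/(2-p)}$. The paper differs only cosmetically, by first rewriting Theorem \ref{l1} in terms of spatial means before concatenating.
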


\noindent
The following estimate relates the $L^r$-norm of a solution $u$, spanning a time interval $(0,t)$, to its initial value at time $0$, provided the $L^r$-norm of the positive part $u_+$ concentrates in a ball.

\begin{theorem}\label{back}
{\rm (Backward $L^r$-$L^r$ estimate)} Let $u$ be a locally bounded solution of \eqref{eq}, non-negative in $B_{4\rho}(x_0)\times(0,t)\subset\Omega_T$. Then, there exists $\gamma>0$ depending on the data, s.t. \
\[\sup_{0<\tau<t}\,\int_{B_\rho(x_0)}u^r(x,\tau)\,dx \le \gamma \max\Big\{\sup_{0<\tau<t}\,\int_{B_\rho^c(x_0)}u_+^r(x,\tau)\,dx, \ \int_{B_{2\rho}(x_0)}u^r(x,0)\,dx+\Big(\frac{t^r}{\rho^{\lambda_r}}\Big)^\frac{1}{r}\Big\}.\]
\end{theorem}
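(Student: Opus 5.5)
We test the equation against $u^{r-1}\zeta^p$ (with $r\ge1$ and $\zeta$ a space-only cutoff) and aim for an ODE-type inequality for $Y(\tau)=\int_{B_{2\rho}}u^r\zeta^p\,dx$. This is the singular counterpart of the classical backward $L^r$ estimates of DiBenedetto. Since $u$ is only assumed non-negative in $B_{4\rho}$, we would actually use $u_+^{r-1}\zeta^p$, where $\zeta\in C^\infty_c(B_{3\rho/2}(x_0))$, $\zeta=1$ on $B_\rho$, $|\nabla\zeta|\le \gamma/\rho$. Because we work with time-independent $\zeta$, we never need an integrable time derivative. The test is justified through Steklov averages, and local boundedness makes $u^{r-1}$ admissible when $r<2$. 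Integrating over $(0,\tau)$ gives
\[\frac1r\int_{B_{2\rho}} u^r\zeta^p(x,\tau)\,dx - \frac1r\int_{B_{2\rho}}u^r\zeta^p(x,0)\,dx = -\int_0^\tau\!\!\iint |u(x)-u(y)|^{p-2}(u(x)-u(y))\big(u^{r-1}\zeta^p(x)-u^{r-1}\zeta^p(y)\big)K\,dx\,dy\,d\sigma .\]

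We split the double integral into the local part ($x,y\in B_{2\rho}$) and the nonlocal part ($x\in B_{3\rho/2}$, $y\in B_{2\rho}^c$).

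For the local part we use the standard algebraic inequality for $(a-b)^{\langle p-1\rangle}(a^{r-1}\zeta_1^p-b^{r-1}\zeta_2^p)$. It yields a nonnegative "good" term, which we discard, minus an error term bounded by
\[\gamma\iint (u(x)+u(y))^{r-2+p}\,|\zeta(x)-\zeta(y)|^pK\,dx\,dy .\]
Since $r-2+p<r$, we control this error by Hölder:
\[\int_{B_{2\rho}} u^{r+p-2}\,dx\cdot\rho^{-ps}\le \rho^{-ps}|B_{2\rho}|^{\frac{2-p}{r}}\Big(\int_{B_{2\rho}}u^r\,dx\Big)^{\frac{r+p-2}{r}} .\]
For the nonlocal part, where $\zeta(y)=0$, the term $u(x)^{r-1}\zeta^p(x)\,(u(x)-u(y))^{\langle p-1\rangle}$ with $u(y)$ possibly positive gives a contribution of the same flavour, bounded through
\[\int_{B_{2\rho}^c}\frac{u_+(y)^{p-1}}{|y-x_0|^{N+ps}}\,dy .\]
We would like to control this by $\sup_\tau\int_{B_\rho^c}u_+^r$ instead of the tail, which forces us to handle the mismatch in exponents. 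We plan to do this with Hölder on $B^c_{2\rho}$ and the integrable weight $|y|^{-(N+ps)r/(r-p+1)}$. This produces
\[\rho^{-ps}\,\rho^{N\frac{r-p+1}{r}}\Big(\int_{B_\rho^c} u_+^r\Big)^{\frac{p-1}{r}}\Big(\int_{B_{2\rho}} u^{r-1}\Big),\]
and after one more Hölder step the total power is $(r+p-2)/r$ again.

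Setting $S=\sup_{0<\tau<t}\int_{B_\rho}u^r$, $M=\max\{S,\ \sup_\tau\int_{B_\rho^c}u_+^r\}$ and $\Sigma(\tau)=\int_{B_{2\rho}}u^r$, we arrive at
\[\sup_{\tau<t}\int_{B_{\rho}}u^r \le \int_{B_{2\rho}}u^r(x,0)\,dx+\gamma\, t\,\rho^{-ps+N\frac{2-p}{r}}\Big(\sup_{\tau<t}\int_{B_{2\rho}}u^r+\sup_{\tau<t}\int_{B_\rho^c}u_+^r\Big)^{\frac{r+p-2}{r}} .\]
Observe that $\rho^{-ps+N(2-p)/r}=\rho^{-\lambda_r/r}$. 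Since $B_{2\rho}\subset B_\rho\cup B_\rho^c$, the quantity inside the parentheses is at most $2M$, using $\int_{B_{2\rho}}u^r\le\int_{B_\rho}u^r+\int_{B_\rho^c}u_+^r$.

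Now we apply Young's inequality with exponents $r/(r+p-2)$ and $r/(2-p)$:
\[t\rho^{-\lambda_r/r}M^{\frac{r+p-2}{r}}\le \varepsilon M+\gamma_\varepsilon\big(t\rho^{-\lambda_r/r}\big)^{\frac{r}{2-p}} .\]
If $M$ equals the outer mass, the claim is trivial. Otherwise $M=S$, and absorbing $\varepsilon S$ yields
\[S\le\gamma\Big[\int_{B_{2\rho}}u^r(x,0)\,dx+\big(t^r/\rho^{\lambda_r}\big)^{\frac1{2-p}}\Big] .\]

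The exponent on the time-scale term in this result does not match the stated $(t^r/\rho^{\lambda_r})^{1/r}$. We would therefore have to revisit the normalisation of the estimate, possibly by scaling or by a different splitting. This mismatch, together with the nonlocal Hölder step (controlling far-away $u_+^{p-1}$ by the $L^r$ mass outside $B_\rho$ uniformly in the integration point $x$), is where we expect the main difficulty.
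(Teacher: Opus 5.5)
Your argument is correct for $r>1$ and gives exactly the bound the paper's proof gives, by a shorter route.

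\textbf{Comparison of routes.} You test once with $u^{r-1}\zeta^p$, where $\zeta$ is cut off between $B_\rho$ and $B_{3\rho/2}$. You then dispose of the resulting $\sup_\tau\int_{B_{2\rho}}u^r$ in one step via $\sup_\tau\int_{B_{2\rho}}u^r\le S+\sup_\tau\int_{B_\rho^c}u_+^r$. This works because the annulus $B_{2\rho}\setminus B_\rho$ lies in $B_\rho^c$, whose mass is already an entry of the max, so a single Young inequality closes.

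The paper instead proves Lemma \ref{bl} for $B_\rho\subset B_{(1+\sigma)\rho}$, using \eqref{bal} only on the far-field term $I_6$. It keeps $\sup_\tau\int_{B_{(1+\sigma)\rho}}u^r$ on the right and removes it by iterating over radii $\rho_n\uparrow 2\rho$ with the interpolation Lemma \ref{int}. That mirrors the local DiBenedetto--Herrero scheme, where no outer mass is available to pay for the annulus. Your observation shows the iteration is unnecessary in the present max-formulation.

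In the far field you keep $u_+^{p-1}(y)u^{r-1}(x)$ and use two H\"older steps. The paper bounds by $u_+^{p+r-2}$ at the far point and uses one H\"older step with weight $|x-x_0|^{-(N+ps)r/(2-p)}$. Both give $t\rho^{-\lambda_r/r}$ times a mass to the power $(r+p-2)/r$.

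The uniformity in $x$ you worried about is immediate: $|x-y|\ge|y-x_0|/4$ for $x\in B_{3\rho/2}(x_0)$ and $y\in B_{2\rho}^c(x_0)$. Keeping $K$ and using $(K_2)$ only on error terms is also cleaner than replacing $K$ by $\gamma^{-1}|x-y|^{-N-ps}$ in a sign-changing integrand, as in \eqref{bl1}.

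\textbf{The exponent mismatch is not a defect of your proof.} The paper's own proof ends with $Y_0\le\gamma\int_{B_{2\rho}(x_0)}u^r(x,0)\,dx+\gamma(t^r/\rho^{\lambda_r})^{1/(2-p)}$, which is exactly your bound. The factor $(t^r/\rho^{\lambda_r})^{1/r}=t\rho^{-\lambda_r/r}$ in the statement is the factor from Lemma \ref{bl} before absorption.

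Scaling confirms $1/(2-p)$. The function $v(x,\tau)=\lambda u(x,\lambda^{p-2}\tau)$ solves an equation of the same class on $(0,\lambda^{2-p}t)$. All masses scale like $\lambda^r$, and so does $(t^r/\rho^{\lambda_r})^{1/(2-p)}$. With exponent $1/r$, the time term would instead carry a relative factor $\lambda^{2-p-r}\to0$ as $\lambda\to\infty$. The stated form therefore amounts to a time-free bound, which neither argument provides, so there is nothing to renormalise.

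\textbf{Two small corrections.}
\begin{enumerate}
\item You need $r>1$. For $r=1$ the good term $(a-b)^{p-1}(a^{r-1}-b^{r-1})$ vanishes, so the cutoff error stays of the form $|u(x)-u(y)|^{p-1}|\zeta(x)-\zeta(y)|$. That is the difficulty Lemma \ref{tst} is built for, and the paper also only treats $r\ge2$ and $1<r<2$.
\item The admissibility issues are reversed. For $1<r<2$ the obstruction is that $s\mapsto s^{r-1}$ is not Lipschitz at $0$; it is cured by testing with $(u+\nu)^{r-1}\zeta^p$ and letting $\nu\to0$, as in Lemma \ref{mob}. Local boundedness is what is needed for $r>2$ (cf.\ \eqref{knu}).
\end{enumerate}
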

\noindent The quantitative estimate of Theorem \ref{back} has to be read as void when the right hand side is unbounded. \vskip0.1cm 
\noindent Besides free solutions of \eqref{eq}, we also consider the associated Cauchy-Dirichlet problem \eqref{cdp}, under the assumptions of Section \ref{framing}. Our first result on problem \eqref{cdp} is a qualitative one, ensuring a finite extinction time for globally non-negative  solutions.

\begin{theorem}\label{fte}
{\rm (Extinction time)} Let $\Omega$ be a bounded open set, $u_0\in W^{s,p}_0(\Omega)\cap L^\infty(\R^N)$ s.t.\ $u_0\ge 0$ in $\Omega$, and $u\ge 0$ be a bounded weak solution of \eqref{cdp}. Then, there exists $T_*\in(0,\infty)$ s.t.\ $u(\cdot,t)=0$ in $\R^N$, for all $t\ge T_*$. In addition, there exists a constant $\gamma_*>0$ depending only on $\{N,p,s\}$ such that 
\begin{enumroman}
\item\label{fte1} if $1<p<p_c$, then \
\[T_* = \frac{\gamma_*}{C_1}\|u_0\|_{L^q(\Omega)}^{2-p}\,, \quad \quad q=\frac{N(2-p)}{ps};\]
\item\label{fte2} if $p_c\le p<2$, then \
\[T_* = \frac{\gamma_*}{C_1}\|u_0\|_{L^2(\Omega)}^{2-p}|\Omega|^\frac{\lambda_2}{2N}.\]
\end{enumroman}
\end{theorem}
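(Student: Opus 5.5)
We argue by an energy (Lyapunov) method on the global $L^q$-type norm of $u(\cdot,t)$, combined with a fractional Sobolev inequality, to get a differential inequality of the form $y'\le -c\,y^{\theta}$ with $\theta<1$. Such an inequality forces $y$ to vanish in finite time.

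\textbf{Case (i), $1<p<p_c$.} Set $m=q-1$, where $q=N(2-p)/(ps)$; the condition $p<p_c$ is equivalent to $q>2$. We formally test \eqref{cdp} with $u^{m}$, which vanishes outside $\Omega$. This gives
\[
\frac{1}{q}\frac{d}{dt}\int_\Omega u^q\,dx + \iint_{\R^{2N}} |u(x)-u(y)|^{p-2}(u(x)-u(y))\big(u^m(x)-u^m(y)\big)K\,dx\,dy=0 .
\]
By $(K_2)$ and the elementary algebraic inequality
\[
|a-b|^{p-2}(a-b)(a^m-b^m)\ge c\,\big|a^{\frac{m+p-1}{p}}-b^{\frac{m+p-1}{p}}\big|^p ,
\]
the double integral is bounded below by $cC_1[w]_{s,p}^p$, where $w=u^{(m+p-1)/p}$. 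The fractional Sobolev inequality then gives $[w]_{s,p}^p\ge c\|w\|_{L^{p^*_s}}^p$ with $p^*_s=Np/(N-ps)$; note $ps<N$ holds since $p<p_c<2$ and $N\ge2$. The choice of $q$ is exactly the one making $\frac{m+p-1}{p}\,p^*_s=q$, i.e.\ $(q+p-2)N/(N-ps)=q$, which solves to $q=N(2-p)/(ps)$. Hence
\[
\|w\|_{p^*_s}^p=\Big(\int u^q\Big)^{(q+p-2)/q}.
\]
Setting $y(t)=\int_\Omega u^q\,dx$, we obtain
\[
y'\le -cC_1\,y^{(q+p-2)/q}.
\]
Since $(q+p-2)/q<1$, integration yields
\[
y(t)^{(2-p)/q}\le y(0)^{(2-p)/q}-cC_1 t ,
\]
so $y$ vanishes by $T_*=\gamma_*C_1^{-1}\|u_0\|_q^{2-p}$, as claimed.

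\textbf{Case (ii), $p_c\le p<2$.} We test with $u$ itself, so $y=\int u^2$. The Sobolev exponent now satisfies $p^*_s\ge2$ (or $ps\ge N$, in which case any finite exponent works). By Hölder's inequality on the bounded set $\Omega$,
\[
\|u\|_2\le|\Omega|^{\frac12-\frac1{p^*_s}}\|u\|_{p^*_s}\le c\,|\Omega|^{\frac{\lambda_2}{2Np}}[u]_{s,p},
\]
since $\frac12-\frac1{p^*_s}=\frac{\lambda_2}{2Np}$. This gives
\[
y'\le -cC_1|\Omega|^{-\lambda_2/(2N)}\,y^{p/2}.
\]
Integrating as before produces $T_*=\gamma_*C_1^{-1}\|u_0\|_2^{2-p}|\Omega|^{\lambda_2/(2N)}$. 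Once $y(T_*)=0$, nonnegativity of $u$ and monotonicity of $y$ give $u(\cdot,t)=0$ for all $t\ge T_*$.

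\textbf{Main obstacle: rigour of the formal computation.} Weak solutions need not have an integrable time derivative, and $u^m$ is not an admissible test function a priori. We would handle this with the Steklov averages $[u]_h$ of Section \ref{sec2}. Test with $([u]_h)^m\zeta(t)$, where $\zeta$ is a time cutoff, or more simply derive the integrated version
\[
y(t_2)+\frac{cC_1}{1}\int_{t_1}^{t_2}y^{\theta}\,d\tau\le y(t_1).
\]
Then pass to $h\to0$, using boundedness of $u$ so that the powers $u^m$ lie in $W^{s,p}$ with uniform control. Continuity in $L^2$, and hence in $L^q$ via boundedness, lets us conclude. The integral inequality for a nonincreasing $y$ yields the same ODE comparison without differentiating $y$. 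The singular case $m<1$ does not occur here, since $q\ge2$, which keeps $u^m$ Lipschitz in $u$ on bounded sets.
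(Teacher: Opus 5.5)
Your proposal is correct and follows essentially the paper's route: test with $u^{q-1}$ in case (i) and with $u$ in case (ii), bound the diffusion term from below by the algebraic inequality of Lemma \ref{alg} and the Sobolev inequality \eqref{sob} (adding H\"older on $\Omega$ in case (ii)), then integrate the resulting differential inequality for $\|u(\cdot,t)\|_{L^q(\Omega)}^q$, respectively $\|u(\cdot,t)\|_{L^2(\Omega)}^2$. The only difference is the rigorous justification. The paper uses the exponential time mollification $u_{\bar h}$ of Appendix \ref{app} (Lemma \ref{mot}) and a weak-in-time inequality tested against $\psi\ge 0$, not Steklov averages, which in fact are not introduced in Section \ref{sec2}; either device works.
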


\noindent
The quantitative counterpart of Theorem \ref{fte}, following from Theorems \ref{l1} and \ref{linf} above, provides us with an estimate of the decay rate (see Figure \ref{fig:FINALE}) as $t$ approaches the extinction time $T_*$, as follows.

\begin{theorem}\label{dec}
{\rm (Decay Rate)} Let $u\ge 0$ be a locally bounded local weak solution of \eqref{eq} in $\Omega_T= \Omega \times [0,T]$ and $T_*\in(0,T)$ an extinction time for $u$. Let us assume $B_{4\rho}(x_0)\times(0,t)\subset\Omega\times(0,T_*)$. Then, there exists a constant $\gamma>0$ depending only on the data s.t. \vskip0.1cm \noindent 
\begin{enumroman}
\item\label{dec1} for all $1<p<2$ the local mass decays as \ 
\[\int_{B_{\rho}(x_0)}u(x,t)\,dx \le \gamma\Big(\frac{T_*-t}{\rho^{\lambda_1}}\Big)^\frac{1}{2-p};\]
\item\label{dec2} if $\lambda_1>0$, then \ 
\[\sup_{B_{\rho/2}(x_0)}u\Big(\cdot,\frac{t+T_*}{2}\Big) \le \gamma\Big(\frac{T_*-t}{\rho^{ps}}\Big)^\frac{1}{2-p}\max\Big\{1,\,\Big(\frac{T_*-t}{\rho^{ps}}\Big)^\frac{1-p}{2-p}{\rm Tail}\Big(u,x_0,\frac{\rho}{2},t,T_*\Big)^{p-1}\Big\}.\]
\end{enumroman}
\end{theorem}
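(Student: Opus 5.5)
The plan is to apply Theorems \ref{l1} and \ref{linf} on a time window that ends at the extinction time. Since $u\ge 0$ everywhere, all tails of $u_-$ vanish, and $u(\cdot,\tau)\equiv 0$ for $\tau\ge T_*$. The one technical point is that both theorems are stated on a cylinder starting at time $0$. Because the equation is autonomous in form, being invariant under time translations with $K(\cdot,\cdot,\cdot+t)$ still satisfying $(K_1)$–$(K_2)$, we may apply them to the translated solution $v(x,\tau)=u(x,\tau+t)$ on $B_{4\rho}(x_0)\times(0,T_*-t)$. To make sure the inclusion hypothesis holds, we allow the window to touch $T_*$: if needed we use $(0,T_*-t-\eps)$ and let $\eps\to 0$, or observe that $u$ is also a local solution slightly beyond $T_*$, namely $u\equiv 0$ there with $T_*<T$.

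\textbf{Part \ref{dec1}.} Apply Theorem \ref{l1} to $v$ with length $T_*-t$. The tail terms involving $u_-$ vanish, so each $\max\{1,\dots\}$ equals $1$. Hence
\[
\sup_{0<\tau<T_*-t}\int_{B_\rho(x_0)}v(x,\tau)\,dx\le \gamma\inf_{0<\tau<T_*-t}\int_{B_{2\rho}(x_0)}v(x,\tau)\,dx+2\gamma\Big(\frac{T_*-t}{\rho^{\lambda_1}}\Big)^{\frac{1}{2-p}}.
\]
The infimum is $0$: as $\tau\uparrow T_*-t$ we have $u(\cdot,t+\tau)\to u(\cdot,T_*)=0$ in $L^1(B_{2\rho})$, by the continuity in time with values in $L^2_{\rm loc}$ built into the notion of weak solution. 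At $\tau\downarrow 0$ the left side dominates $\int_{B_\rho}u(x,t)\,dx$, again by time continuity. This gives \ref{dec1}. Here $\lambda_1$ may have any sign; only the scaling $\rho^{-\lambda_1}$ appears.

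\textbf{Part \ref{dec2}.} For $\lambda_1>0$, apply Theorem \ref{linf} to $v$ on $(0,T_*-t)$. The first term vanishes because the infimum of the $L^1$ mass is zero, as above. The last two terms carry $\mathrm{Tail}(u_-)=0$, so they reduce to $\gamma((T_*-t)/\rho^{ps})^{1/(2-p)}$, which is absorbed into the second term since that term's $\max$ is at least $1$. The remaining tail of $u_+=u$ is $\mathrm{Tail}(u,x_0,\rho/2,t,T_*)$ after undoing the translation. The cylinder supremum over $B_{\rho/2}\times((T_*-t)/2,T_*-t)$ in $v$-time contains the time $\tau=(T_*-t)/2$, that is, $u$ at time $(t+T_*)/2$. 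Since $u$ is locally bounded and the sup estimate holds a.e., the slice bound follows from the cylinder bound. Combining these gives the claimed estimate with a new $\gamma$.

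\textbf{Main obstacle.} The delicate points are justifying the limit $\inf_\tau\int u(\cdot,\tau)=0$ and the slice evaluation, which require time continuity of $u$ in $L^1_{\rm loc}$ up to $T_*$. I would handle both by applying the theorems on $(0,T_*-t-\eps)$ and using the $C([0,T];L^2_{\rm loc})$ regularity of weak solutions. On that window the infimum is bounded by $\int_{B_{2\rho}}u(\cdot,T_*-\eps')$, which tends to $0$. All constants are uniform in $\eps$, so letting $\eps\to 0$ concludes.
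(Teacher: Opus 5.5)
Your proposal is correct and follows the paper's proof. The paper applies Corollary \ref{pos}, which is exactly Theorems \ref{l1} and \ref{linf} with ${\rm Tail}(u_-)=0$ (so $P_-=1$), on the time-translated window $(t,T_*)$, and then uses extinction to make the infimum of the $L^1$ mass over that window vanish. Your added care with continuity in time in $L^2_{\rm loc}$ fills in two points the paper leaves implicit: why that infimum is zero, and how to pass from the cylinder supremum to the time slice at $(t+T_*)/2$.
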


\noindent
Note that the $L^\infty$-bound in \ref{dec2} is a stronger bound than the $L^1$-bound of \ref{dec1} but the latter also holds for $p<2N/(N+s)$ and it does not suffer from the behavior of $u_+$ in $B_\rho(x_0) \setminus B_{\rho/2}(x_0)$.

\begin{figure}
     \centering
     \includegraphics[scale=0.3]{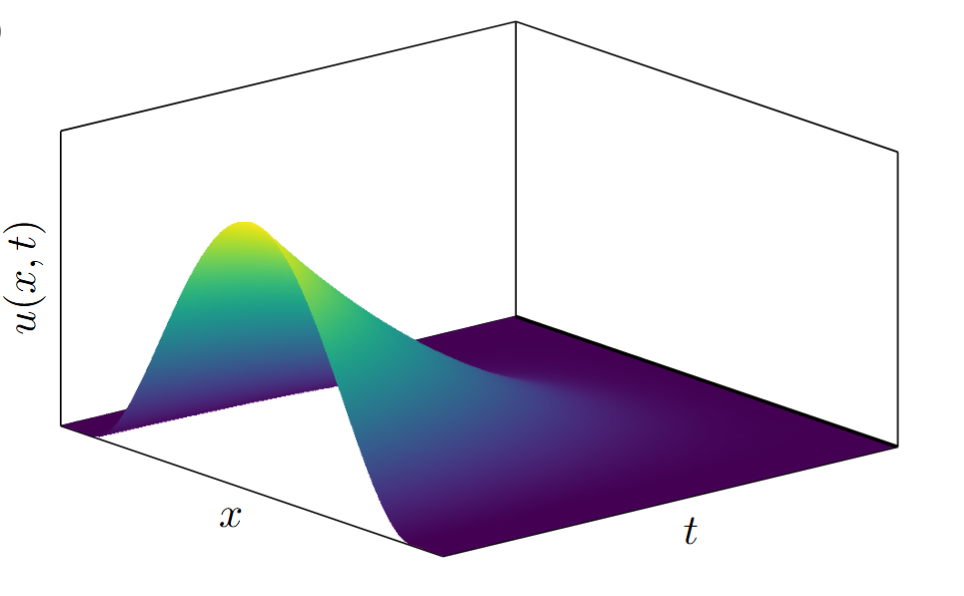}
     \caption{Illustration of the extinction decay for a local weak solution to \eqref{cdp}.}
     \label{fig:FINALE}
 \end{figure}

\subsection{Novelty and Significance} 
To the best of our knowledge, $L^1$-$L^1$ Harnack-type estimates are first found in \cite{DBH-sing}, with the aim of studying the existence of initial traces for very weak solutions to the singular $p$-Laplacian prototype. A couple of years later they were used in \cite{DiB-Kw} for both the cases of the $p$-Laplacian equation and the porous medium one, with the aim of evaluating the rate of extinction of non-negative solutions. The method of \cite{DBH-sing} for the $p$-Laplacian has been reported in \cite{DB}, Chap. VII for solutions to the prototype singular equation ($1<p<2$). A proof for $p$-Laplacian type equations with full quasilinear structure can be found first in the paper \cite{Annali} and then in the monograph \cite{DBGV-mono}. See also \cite{BoIaVa} and \cite{DBGV-mono} for a later treatment of the sub-critical case.
\vskip2pt
\noindent
In the case of fractional $p$-Laplacian diffusion, such Harnack-type estimates are new. For a different notion of solution and in the whole space, the author in \cite{Vaz1} has proved some global Harnack bounds. Existence theory for different definitions of solutions than ours can be found in  \cite{Mazon} and \cite{Vaz-CDP}, see also the book  \cite{Mazon-book}. There the notion of solution involves the existence of a Sobolev time-derivative, that we have carefully avoided here through the approximation procedures of Appendix \ref{app}. The main difference lies in considering a general measurable and bounded kernel as in $(K_1)$, $(K_2)$. The discontinuity of the field into the divergence term of \eqref{eq} results in a lack of regularity of the time derivative $u_t$, see \cite{LadSolUra} Sec.\ 13 of Ch. III (pages 224-233). The same lack of regularity in time pertains the possible weak solutions to the Cauchy-Dirichlet problem, whose analysis of the extinction phenomenon has been performed in \cite{AABP}, Theorem 5.1. There the authors point out a particular feature of the nature of fractional nonlinear diffusion, that is, it does not necessarily meet finite speed of propagation in $\R^N$ when $p>2$. On the other hand, similarly to the classical $p$-Laplacian, when $p>2$ local boundedness is inherent in the definition of local weak solution, see \cite{Strom1} (and later \cite{Brasco} for the prototype equation, see also \cite{AP}).
\vskip2pt
\noindent
Here, {\emph{en passant}}, we prove that the same is valid for local weak solutions in the super-critical range (see Remark \ref{RMK-Bundedness}), see also Theorem 1.1 of \cite{koreani} for a study on the best tail condition and \cite{Prasad-Tewary} in the case with no phase ($a=0$) for a whole satisfying picture in the special case of parabolic minima with a boundary datum. An analysis of the $L^1$-$L^{\infty}$ estimates with the finer tail condition of \cite{Kass} and \cite{Liao1} can be carried from ours with few effort. This is indeed the aim of a future project, where these estimate will be necessary for a different aim. Here we directly employ the definition \eqref{tail}, since we aim to the evaluation of the extinction decay of $u$ as power of time, i.e. $(T^*-t)^{1/(2-p)}$, disregarding of the optimality of the long-range effects. Concerning the study of the extinction of solutions, the works \cite{Vaz1}, \cite{Vazquez-Again} (see Section 8) investigate the decay in the whole space through an appropriate comparison principle. For a similar definition of solution, the authors in \cite{Collier-Hauer} study the decay rate towards extinction for an equation with double-nonlinearity in time. What diversifies the extinction rates of Theorem \ref{dec} is the concept of solution (see Definition \ref{lws}), and the circumvention of any comparison principle. Indeed, the formulation of the decay rate of Theorem \ref{dec} doesn't actually require $u$ to satisfy any Cauchy-Dirichlet conditions. 

\subsection{The Method} We rely on the energy estimates of \cite{L} to apply the iterative De Giorgi scheme to prove Theorem \ref{lr}. Then the $L^1$-$L^1$ Harnack-type estimates are found here, following the approach of \cite{DBGV-mono} within the refined technique of \cite{CH}, \cite{EIS}, but with a precise nonlocal method appropriately devised, see the proof of Lemma \ref{tst} for details. On the other hand, the $L^r$-$L^{\infty}$ bounds of Theorem \ref{lr} are more similar to the classic \cite{DB}, but again the tail terms impose an either/or alternative that is more in the spirit of Lemma 3.1 of \cite{L} (see also Theorem 1.8 of \cite{Kass} in the linear case with the finest tail condition). Finally, the backwards $L^r$- estimates cannot run the argument of \cite{DBGV-mono} and had to adapt the original idea of \cite{DBH-sing} with a proper condition on the long-range effects of the solution. The extinction phenomenon is reminiscent of the classic energy method (see for instance \cite{DB} Ch. VII section 2, or \cite{ADS}), while the lack of time-derivative called for a precise time-mollification in the spirit of \cite{KL}. 

\subsection{Future Applications} The $L^1$-$L^1$ estimate is very useful to transport uniformly measure information in time. It can be used, among other possibilities, to give a simpler proof of the H\"older continuity of solutions (see \cite{CV}) using the main tools developed in \cite{L}. At the same time $L^1$-$L^1$ Harnack-type inequalities are fundamental in the study of the initial traces, as in \cite{DBH-sing} (see \cite{DB} for a more comprehensive overview), while backward $L^r$ estimates as the ones of Theorem \ref{back} can be used to show that a precise local integrability of the initial datum $u_0$ (i.e., $u_0 \in L^r_{\rm loc}(\R^N)$) yields directly locally bounded and locally H\"older-continuous solutions by Ascoli-Arzelà's principle, see Remark 3.6 or \cite{DiB-Kw}. When an oscillation estimate is at stake (as in \cite{L}), it is again possible to study the decay rate to extinction, in a different local geometry than ours, see Corollary 1.4 of \cite{BDGLS} or \cite{Kim-Lee} for an approach with the extension trick of \cite{Caffarelli-Silvestre}.

\subsection{Structure of the paper} In Section \ref{sec2} we recall the basic definitions and some preliminary results, including the parabolic fractional embeddings, energy estimates and various iteration Lemmas; in Section \ref{sec3} we prove Theorem \ref{lr}; in Section \ref{sec4} we prove Theorems \ref{l1} and \ref{linf}; in Section \ref{sec5} we prove Theorem \ref{back}; in Section \ref{sec6} we prove Theorems \ref{fte} and \ref{dec}; finally, in Appendix \ref{app} we deal with the technical issue of time mollifications.

\subsection{Notation} Throughout the paper, for all $U\subset\R^N$ we will denote by $|U|$ the $N$-dimensional Lebesgue measure of $U$, $U^c=\R^N\setminus U$. By $B_\rho(x)$ we will denote the open ball of radius $\rho$ centered at $x$. Writings like $u\le v$ in $U$ will mean that $u(x)\le v(x)$ for a.e.\ $x\in U$. By $u_+$ (resp., $u_-$) we will denote the positive (resp., negative) part of $u$. By $\inf_U u$ (resp., $\sup_U u$) we will denote the essential infimum (resp., supremum) of $u$ in $U$. Most important, $\gamma$ will denote several positive constants, only depending on the data $N,p,s,\Omega,C_1,C_2$ of the problem, except when explicitly noted.

\section{Preliminaries}\label{sec2}

\noindent
This section includes some preliminary definitions which provide a rigorous framework for our problems, and some technical results that will be used in the proofs of our main results. We begin by recalling that, for any open set $\Omega\subseteq\R^N$ ($N\ge 2$), a function $u:\Omega\to\R$ is said to belong to the fractional Sobolev space $W^{s,p}(\Omega)$ $0<s<1$, $1<p<2$) if $u\in L^p(\Omega)$ and
\[\iint_{\Omega\times\Omega}\frac{|u(x)-u(y)|^p}{|x-y|^{N+ps}}\,dx\,dy < \infty.\]
Clearly, if $\Omega$ is unbounded, we say that $u\in W^{s,p}_{\rm loc}(\Omega)$ if $u\in W^{s,p}(\Omega')$ for all $\Omega'\Subset\Omega$. Also, if $\Omega$ is bounded, we say that $u\in W^{s,p}_0(\Omega)$ if $u\in W^{s,p}(\R^N)$ and $u=0$ a.e.\ in $\Omega^c$. We recall Sobolev's embedding for fractional order spaces: if $\Omega$ is bounded, then there exists $\gamma>0$ depending on $N,p,s$ s.t.\ for all $u\in W^{s,p}_0(\Omega)$
\beq\label{sob}
\Big[\int_\Omega |u(x)|^{p^*_s}\,dx\Big]^\frac{1}{p^*_s} \le \gamma\Big[\iint_{\R^N\times\R^N}\frac{|u(x)-u(y)|^p}{|x-y|^{N+ps}}\,dx\,dy\Big]^\frac{1}{p}, \ p^*_s = \frac{Np}{N-ps}
\eeq
(recall that $N>ps$). For evolutive equations, we will always identify $u(\cdot,t)$ with $u(t)$.
\vskip2pt
\noindent
Now let $\Omega\subset\R^N$ be an open set, $T\in(0,\infty]$. We define solutions of our equation as in \cite[Subsection 1.2.2]{L}:

\begin{definition}\label{lws}
We say that $u\in C(0,T,L^2_{\rm loc}(\Omega))\cap L^p_{\rm loc}(0,T,W^{s,p}_{\rm loc}(\Omega))$ is a (local, weak) solution of \eqref{eq} if for all $\Omega'\Subset\Omega$ and all $0<t_1<t_2<T$ the following conditions are satisfied:
\begin{enumroman}
\item\label{lws1} for all $\varphi\in W^{1,2}_{\rm loc}(0,T,L^2(\Omega'))\cap L^p_{\rm loc}(0,T,W^{s,p}_0(\Omega'))$
\begin{align*}
0 &= \int_{\Omega'}u(x,\tau)\varphi(x,\tau)\,dx\Big|_{t_1}^{t_2}-\int_{t_1}^{t_2}\int_{\Omega'}u(x,\tau)\varphi_\tau(x,\tau)\,dx\,d\tau \\
&+ \int_{t_1}^{t_2}\iint_{\R^N\times\R^N}|u(x,\tau)-u(y,\tau)|^{p-2}(u(x,\tau)-u(y,\tau))(\varphi(x,\tau)-\varphi(y,\tau))K(x,y,\tau)\,dx\,dy\,d\tau;
\end{align*}
\item\label{lws2} $\displaystyle\sup_{t_1<\tau<t_2}\,\int_{\R^N}\frac{|u(x,\tau)|^{p-1}}{(1+|x|)^{N+ps}}\,dx < \infty$.
\end{enumroman}
\end{definition}

\noindent
Solutions of the Cauchy-Dirichlet problem \eqref{cdp} are defined as follows:

\begin{definition}\label{scp}
Let $\Omega\subset\R^N$ be a bounded open set, $u_0\in W^{s,p}_0(\Omega)$. We say that $u\in C(0,T,L^2(\Omega))\cap L^p(0,T,W^{s,p}_0(\Omega))$ is a (weak) solution of \eqref{cdp} if
\begin{enumroman}
\item\label{scp1} for all $\varphi\in W^{1,2}(0,T,L^2(\Omega))\cap L^p(0,T,W^{s,p}_0(\Omega))$
\begin{align*}
0 &= \int_\Omega u(x,\tau)\varphi(x,\tau)\,dx\Big|_0^T-\int_0^T\int_\Omega u(x,\tau)\varphi_\tau(x,\tau)\,dx\,d\tau \\
&+ \int_0^T\iint_{\R^N\times\R^N}|u(x,\tau)-u(y,\tau)|^{p-2}(u(x,\tau)-u(y,\tau))(\varphi(x,\tau)-\varphi(y,\tau))K(x,y,\tau)\,dx\,dy\,d\tau;
\end{align*}
\item\label{scp2} $u(\cdot,0)=u_0$ in $\R^N$.
\end{enumroman}
\end{definition}

\noindent
Note that Definition \ref{scp} implies Definition \ref{lws}, as condition \ref{lws2} of the latter follows from $u(\cdot,\tau)\in W^{s,p}_0(\Omega)$. We keep the definition of nonlocal tails as in \eqref{tail}. We will next recall some basic properties of solutions, beginning from an embedding inequality from \cite[Proposition A.3]{L}:

\begin{proposition}\label{emb}
{\rm (Embedding)} Let $x_0\in\R^N$, $0<\rho_1<\rho_2<\rho$, $0<t_1<t_2$ be s.t.\ $B_\rho(x_0)\times(t_1,t_2)\subset\Omega_T$ and $u\in L^p(t_1,t_2,W^{s,p}(B_\rho(x_0)))\cap L^\infty(t_1,t_2,L^2(B_\rho(x_0)))$ be s.t.\ ${\rm supp}(u(\cdot,\tau))\subset B_{\rho_1}(x_0)$ for a.e.\ $\tau\in(t_1,t_2)$. Then, there exists $\gamma>0$ depending on $N,p,s$ s.t.\
\begin{align*}
& \int_{t_1}^{t_2}\int_{B_{\rho_2}(x_0)}|u(x,\tau)|^\frac{(N+2s)p}{N}\,dx\,d\tau \le \gamma\Big[\sup_{t_1<\tau<t_2}\,\fint_{B_{\rho_2}(x_0)}u^2(x,\tau)\,dx\Big]^\frac{ps}{N} \\
&\cdot \Big[\rho_2^{ps}\int_{t_1}^{t_2}\iint_{B_{\rho_2}(x_0)\times B_{\rho_2}(x_0)}\frac{|u(x,\tau)-u(y,\tau)|^p}{|x-y|^{N+ps}}\,dx\,dy\,d\tau+\Big(\frac{\rho_2}{\rho_2-\rho_1}\Big)^{N+ps}\int_{t_1}^{t_2}\int_{B_{\rho_2}(x_0)}|u(x,\tau)|^p\,dx\,d\tau\Big].
\end{align*}
\end{proposition}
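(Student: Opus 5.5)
We argue slice by slice in time: a Hölder interpolation between $L^{p^*_s}$ and $L^2$, then the fractional Sobolev inequality \eqref{sob} applied to the zero extension of $u(\cdot,\tau)$, with the part of the Gagliardo seminorm lying outside $B_{\rho_2}(x_0)$ absorbed into an $L^p$ term. Fix a.e.\ $\tau\in(t_1,t_2)$ and write $v=u(\cdot,\tau)$, extended by zero outside $B_{\rho_1}(x_0)$; by assumption this does not change $v$. Since $N>ps$, we can split
\[|v|^{\frac{(N+2s)p}{N}}=|v|^{p}\,|v|^{\frac{2ps}{N}}\]
and apply Hölder's inequality on $B_{\rho_2}(x_0)$ with the conjugate exponents $\frac{N}{N-ps}$ and $\frac{N}{ps}$. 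Because $p\cdot\frac{N}{N-ps}=p^*_s$ and $\frac{2ps}{N}\cdot\frac{N}{ps}=2$, this gives
\[\int_{B_{\rho_2}(x_0)}|v|^{\frac{(N+2s)p}{N}}\,dx\le \Big[\int_{B_{\rho_2}(x_0)}|v|^{p^*_s}\,dx\Big]^{\frac{p}{p^*_s}}\Big[\int_{B_{\rho_2}(x_0)}v^2\,dx\Big]^{\frac{ps}{N}}.\]
Writing the last factor as $|B_{\rho_2}|^{ps/N}\big[\fint_{B_{\rho_2}(x_0)} v^2\,dx\big]^{ps/N}$ produces the factor $\gamma\rho_2^{ps}$ that appears in front of the bracket in the statement.

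Next we bound the $L^{p^*_s}$ norm. The zero extension of $v$ lies in $W^{s,p}_0(B_{\rho_2}(x_0))$, so we may apply \eqref{sob} with $\Omega=B_{\rho_2}(x_0)$; its constant is scale invariant and depends only on $N,p,s$. The full Gagliardo seminorm over $\R^N\times\R^N$ splits into the integral over $B_{\rho_2}\times B_{\rho_2}$ plus twice the cross term over $B_{\rho_1}\times B_{\rho_2}^c$. In the cross term, $x\in B_{\rho_1}(x_0)$ and $y\notin B_{\rho_2}(x_0)$ give $|x-y|\ge\rho_2-\rho_1$, hence
\[2\int_{B_{\rho_1}(x_0)}|v(x)|^p\int_{B^c_{\rho_2}(x_0)}\frac{dy}{|x-y|^{N+ps}}\,dx\le \frac{\gamma}{(\rho_2-\rho_1)^{ps}}\int_{B_{\rho_2}(x_0)}|v|^p\,dx.\]
Combining the two steps,
\[\int_{B_{\rho_2}(x_0)}|v|^{\frac{(N+2s)p}{N}}\,dx\le\gamma\Big[\fint_{B_{\rho_2}(x_0)} v^2\,dx\Big]^{\frac{ps}{N}}\Big[\rho_2^{ps}\iint_{B_{\rho_2}\times B_{\rho_2}}\frac{|v(x)-v(y)|^p}{|x-y|^{N+ps}}\,dx\,dy+\Big(\frac{\rho_2}{\rho_2-\rho_1}\Big)^{ps}\int_{B_{\rho_2}(x_0)}|v|^p\,dx\Big].\]
Since $\rho_2/(\rho_2-\rho_1)\ge1$, the coefficient $\big(\frac{\rho_2}{\rho_2-\rho_1}\big)^{ps}$ can be replaced by $\big(\frac{\rho_2}{\rho_2-\rho_1}\big)^{N+ps}$.

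Finally, we bound the $L^2$ average by its supremum over $(t_1,t_2)$, which is finite because $u\in L^\infty(t_1,t_2,L^2)$. We then integrate in $\tau$ over $(t_1,t_2)$, and the remaining terms are finite because $u\in L^p(t_1,t_2,W^{s,p}(B_\rho(x_0)))$. The only delicate point is the localization step: \eqref{sob} needs the whole-space seminorm, while the statement keeps only the seminorm on $B_{\rho_2}\times B_{\rho_2}$. The support condition ${\rm supp}(u(\cdot,\tau))\subset B_{\rho_1}(x_0)$ with $\rho_1<\rho_2$ is what makes the discarded off-diagonal part controllable, at the price of the factor depending on $\rho_2-\rho_1$ in front of $\int|u|^p$.
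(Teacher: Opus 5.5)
Your proof is correct: zero extension, then the fractional Sobolev inequality \eqref{sob}, then control of the off-diagonal part of the seminorm through the gap $\rho_2-\rho_1$, then H\"older interpolation against the $L^2$ norm; the paper gives no proof of its own and simply quotes \cite[Proposition A.3]{L}, and your argument is the standard one behind such embeddings. Your direct bound $|x-y|\ge\rho_2-\rho_1$ on the cross term even gives the sharper factor $\big(\frac{\rho_2}{\rho_2-\rho_1}\big)^{ps}$, whereas the exponent $N+ps$ in the statement is what the cruder estimate $|x-y|\ge(1-\rho_1/\rho_2)|y-x_0|$ produces.
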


\noindent
The following energy estimate from \cite[Corollary 2.1]{L} will be one of our primary tools:

\begin{proposition}\label{ee}
{\rm (Energy estimate)} Let $u$ be a solution of \eqref{eq}, $x_0\in\R^N$, $0<\rho_1<\rho_2$, $0<t_1<t_2<t$ be s.t.\ $B_{\rho_2}(x_0)\times(0,t)\subset \Omega_T$, $k>0$, $w=u-k$. Then, there exists $\gamma>0$ depending on the data s.t.\
\begin{align*}
& \sup_{t-t_1<\tau<t}\,\int_{B_{\rho_1}(x_0)}w_\pm^2(x,\tau)\,dx+\int_{t-t_1}^t\iint_{B_{\rho_1}(x_0)\times B_{\rho_1}(x_0)}\frac{|w_\pm(x,\tau)-w_\pm(y,\tau)|^p}{|x-y|^{N+ps}}\,dx\,dy\,d\tau \\
&\le \frac{\gamma}{t_2-t_1}\int_{t-t_2}^t\int_{B_{\rho_2}(x_0)}w_\pm^2(x,\tau)\,dx\,d\tau+\frac{\gamma\rho_2^{p-ps}}{(\rho_2-\rho_1)^p}\int_{t-t_2}^t\int_{B_{\rho_2}(x_0)}w_\pm^p(x,\tau)\,dx\,d\tau \\
&+ \frac{\gamma\rho_2^N}{(\rho_2-\rho_1)^{N+ps}}\Big[\int_{t-t_2}^t\int_{B_{\rho_2}(x_0)}w_\pm(x,\tau)\,dx\,d\tau\Big]\,{\rm Tail}(w_\pm,x_0,\rho_2,t-t_2,t)^{p-1}.
\end{align*}
\end{proposition}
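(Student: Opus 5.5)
This energy estimate is quoted from \cite[Corollary 2.1]{L}. If I were to prove it directly, I would follow the standard Caccioppoli scheme: test the weak formulation in Definition \ref{lws} with $\varphi = w_\pm\,\zeta^p$. I treat $w_+$; the case $w_-$ is symmetric. The cutoff is $\zeta(x,\tau)=\psi(x)\eta(\tau)$, where:
\begin{itemize}[leftmargin=1cm]
\item $\psi\in C^\infty_c(B_{(\rho_1+\rho_2)/2}(x_0))$, $0\le\psi\le 1$, $\psi=1$ on $B_{\rho_1}(x_0)$ and $|\nabla\psi|\le \gamma/(\rho_2-\rho_1)$;
\item $\eta$ is Lipschitz, vanishes at $t-t_2$, equals $1$ on $(t-t_1,t)$ and satisfies $0\le\eta'\le 1/(t_2-t_1)$.
\end{itemize}
I integrate over $(t-t_2,\tau)$ for an arbitrary $\tau\in(t-t_1,t)$ and take the supremum in $\tau$ at the end.

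\textbf{Time derivative.} Definition \ref{lws} gives no integrable $u_t$, so $w_+\zeta^p$ is not an admissible test function as it stands. I would first test with a time-regularized version, either Steklov averages $[u]_h$ or the exponential mollification used in the Appendix, in the spirit of \cite{KL}. With that regularization the term $\int u\,\varphi_\tau$ becomes $\frac12\int\partial_\tau([w]_h)_+^2\zeta^p$ up to an error with a favourable sign. Integrating by parts in time and letting $h\to0$, using $u\in C(0,T,L^2_{\rm loc})$, gives
\[\frac12\int_{B_{\rho_2}}w_+^2\zeta^p(x,\tau)\,dx-\frac{p}{2}\int_{t-t_2}^{\tau}\int_{B_{\rho_2}}w_+^2\zeta^{p-1}\eta'\psi\,dx\,d\sigma.\]
The second piece is bounded by $\frac{\gamma}{t_2-t_1}\iint w_+^2$, which is the first term on the right-hand side of the estimate. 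I expect this justification of the limit $h\to0$, for both the time term and the nonlocal term under the mollification, to be the main technical obstacle. I would handle it by strong convergence of $[u]_h$ in $L^p(W^{s,p})$ and the dominated convergence theorem.

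\textbf{Local part of the nonlocal term.} I split $\R^N\times\R^N$ into $B_{\rho_2}\times B_{\rho_2}$ and, using $(K_1)$, twice the set $B_{\rho_2}\times B_{\rho_2}^c$. On $B_{\rho_2}\times B_{\rho_2}$ I use the algebraic inequality of \cite{DKP16}:
\[|a-b|^{p-2}(a-b)\big((a-k)_+\zeta_1^p-(b-k)_+\zeta_2^p\big)\ge c\,|(a-k)_+\zeta_1-(b-k)_+\zeta_2|^p-C\max\{(a-k)_+,(b-k)_+\}^p|\zeta_1-\zeta_2|^p.\]
When only one of $a,b$ exceeds $k$ the left side is simply nonnegative. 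Together with $(K_2)$, this produces the Gagliardo seminorm of $w_+\zeta$. That seminorm dominates the seminorm of $w_+$ on $B_{\rho_1}\times B_{\rho_1}$, since $\zeta=1$ there for $\tau\in(t-t_1,t)$. The error term is at most
\[\gamma\iint w_+^p(x)\frac{|\psi(x)-\psi(y)|^p}{|x-y|^{N+ps}}\,dy\,dx\le\frac{\gamma\rho_2^{p-ps}}{(\rho_2-\rho_1)^p}\iint w_+^p,\]
which follows by integrating $|x-y|^{p-N-ps}$ over $B_{2\rho_2}(x)$. This is the second term of the estimate.

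\textbf{Tail part.} Take $x\in{\rm supp}\,\psi$ with $u(x)>k$ and $y\in B_{\rho_2}^c$. Then $u(x)-u(y)\ge -w_+(y)$, so the integrand is bounded below by $-w_+(y)^{p-1}w_+(x)\zeta^p(x)K$. Moreover $|y-x|\ge\frac{\rho_2-\rho_1}{2\rho_2}|y-x_0|$ for such $x,y$. Hence
\[\int_{B_{\rho_2}^c}w_+(y)^{p-1}K\,dy\le\gamma\Big(\frac{\rho_2}{\rho_2-\rho_1}\Big)^{N+ps}\rho_2^{-ps}\,{\rm Tail}(w_+,x_0,\rho_2,t-t_2,t)^{p-1}=\frac{\gamma\rho_2^N}{(\rho_2-\rho_1)^{N+ps}}{\rm Tail}^{p-1}.\]
Multiplying by $\iint w_+$ gives the third term of the estimate. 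Collecting all terms, and taking the supremum in $\tau$ together with $\tau=t$ for the seminorm, yields the stated inequality.
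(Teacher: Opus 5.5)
Your proposal is correct and follows essentially the standard Caccioppoli argument behind \cite[Corollary 2.1]{L}, which the paper quotes without proof: you test with $w_\pm\zeta^p$ after a backward exponential time mollification (as in the Appendix), then split the nonlocal term into a local part, controlled by the algebraic inequality of \cite{DKP16}, and a tail part. One small slip: when exactly one of $a,b$ exceeds $k$, say $a>k\ge b$, nonnegativity of the left-hand side alone does not give the algebraic inequality, but the inequality follows at once from $(a-b)^{p-1}(a-k)\zeta_1^p\ge (a-k)^p\zeta_1^p$.
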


\noindent
We also recall two useful technical results involving recursive relations. The first is \cite[Lemma 5.1]{DBGV-mono}:

\begin{lemma}\label{fc}
{\rm (Fast convergence)} Let $(Y_n)$ be a sequence of non-negative real numbers, $b,C>1$, $\eta>0$ s.t.\
\begin{enumroman}
\item\label{fc1} $Y_{n+1}\le Cb^nY_n^{1+\eta}$ for all $n\in\N$;
\item\label{fc2} $Y_0\le C^{-\frac{1}{\eta}}b^{-\frac{1}{\eta^2}}$.
\end{enumroman}
Then, $Y_n\to 0$ as $n\to\infty$.
\end{lemma}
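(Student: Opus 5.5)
We argue by strong induction on $n$, proving the quantitative decay
\[
Y_n \le C^{-\frac{1}{\eta}}\, b^{-\frac{1}{\eta^2}}\, b^{-\frac{n}{\eta}} \qquad\text{for all } n\in\N .
\]
Since $b>1$ and $\eta>0$, the right-hand side tends to $0$ geometrically, so $Y_n\to 0$ follows at once. Non-negativity of the $Y_n$ is used only so that raising to the power $1+\eta$ preserves inequalities.

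The base case $n=0$ is exactly hypothesis \ref{fc2}. For the inductive step, assume the bound holds for $Y_n$. By hypothesis \ref{fc1} and monotonicity of $y\mapsto y^{1+\eta}$ on $[0,\infty)$,
\[
Y_{n+1} \le C\, b^n\, Y_n^{1+\eta} \le C\, b^n\, C^{-\frac{1+\eta}{\eta}}\, b^{-\frac{1+\eta}{\eta^2}}\, b^{-\frac{n(1+\eta)}{\eta}} .
\]
We now collect exponents. The power of $C$ is $1-\frac{1+\eta}{\eta}=-\frac{1}{\eta}$. The power of $b$ is
\[
n-\frac{1+\eta}{\eta^2}-\frac{n(1+\eta)}{\eta} = -\frac{n}{\eta}-\frac{1}{\eta^2}-\frac{1}{\eta} = -\frac{1}{\eta^2}-\frac{n+1}{\eta}.
\]
Hence $Y_{n+1}\le C^{-\frac{1}{\eta}}\,b^{-\frac{1}{\eta^2}}\,b^{-\frac{n+1}{\eta}}$, which closes the induction.

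There is no genuine obstacle here. The only delicate point is choosing the right ansatz $Y_n\le Y_0\,b^{-n/\eta}$, with $Y_0$ replaced by its upper bound from \ref{fc2}. With this choice the factor $b^n$ in \ref{fc1} is exactly absorbed by the extra factor $b^{-n}$ coming from $(b^{-n/\eta})^{\eta}$. The extra $b^{-1/\eta^2}$ in \ref{fc2} supplies the additional $b^{-1/\eta}$ needed to pass from $n$ to $n+1$.
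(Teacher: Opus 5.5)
Your proof is correct. The exponent bookkeeping checks out, and the induction $Y_n\le C^{-1/\eta}b^{-1/\eta^2}b^{-n/\eta}$ (an ordinary, not strong, induction) is the standard argument behind \cite[Lemma 5.1]{DBGV-mono}, which the paper quotes without proof; the usual version differs only cosmetically, carrying $Y_0$ itself to get $Y_n\le b^{-n/\eta}Y_0$ and using the smallness of $Y_0$ in the inductive step as $CY_0^{\eta}\le b^{-1/\eta}$, whereas you use it once at the base case.
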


\noindent
A somewhat dual property is the following \cite[Lemma 5.2]{DBGV-mono}:

\begin{lemma}\label{int}
{\rm (Interpolation)} Let $(Y_n)$ be a bounded sequence of non-negative real numbers, $b>1$, $C>0$, $\eta\in(0,1)$ s.t.\ $Y_n \le Cb^nY_{n+1}^{1-\eta}$ for all $n\in\N$. Then,
\[Y_0 \le \big(2Cb^\frac{1-\eta}{\eta}\big)^\frac{1}{\eta}.\]
\end{lemma}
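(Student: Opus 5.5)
The plan is to iterate the recursive inequality downward from $Y_0$, keep track of the exponents in closed form, and then use boundedness of the sequence to dispose of the last term in the limit. The factor $2$ in the claimed bound will turn out to be unnecessary.

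\emph{Iteration.} Set $M=\sup_n Y_n<\infty$. Applying the hypothesis $Y_n\le Cb^nY_{n+1}^{1-\eta}$ successively at indices $0,1,\dots,n-1$, and using that $x\mapsto x^{(1-\eta)^i}$ is nondecreasing on $[0,\infty)$, we get by induction on $n$
\[
Y_0\le C^{\sum_{i=0}^{n-1}(1-\eta)^i}\, b^{\sum_{i=0}^{n-1}i(1-\eta)^i}\, Y_n^{(1-\eta)^n}.
\]
For the inductive step, assume the display holds for $n$ and insert $Y_n\le Cb^nY_{n+1}^{1-\eta}$ raised to the power $(1-\eta)^n$. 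This contributes $C^{(1-\eta)^n}$, $b^{n(1-\eta)^n}$ and $Y_{n+1}^{(1-\eta)^{n+1}}$, which is exactly the display at $n+1$.

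\emph{Bounding the sums.} Since $C>0$ may be smaller than $1$, I will not replace the exponent of $C$ by the full series. Instead I write it as $\frac{1}{\eta}\big(1-(1-\eta)^n\big)$, since the sum is a finite geometric series. For $b$, which satisfies $b>1$, it suffices to bound the exponent by the full series, $\sum_{i\ge0}i(1-\eta)^i=\frac{1-\eta}{\eta^2}$. This is the derivative-of-the-geometric-series identity, valid since $0<1-\eta<1$.

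\emph{Passing to the limit.} For the remaining term, $Y_n^{(1-\eta)^n}\le \max\{1,M\}^{(1-\eta)^n}$. The bound $\max\{1,M\}$ rather than $M$ takes care of the case in which some $Y_n$ vanish or $M<1$. Letting $n\to\infty$, we have $(1-\eta)^n\to 0$, so this term tends to $1$ and $C^{\frac1\eta(1-(1-\eta)^n)}\to C^{1/\eta}$. Hence
\[
Y_0\le C^{\frac1\eta}\, b^{\frac{1-\eta}{\eta^2}}=\big(Cb^{\frac{1-\eta}{\eta}}\big)^{\frac1\eta}\le \big(2Cb^{\frac{1-\eta}{\eta}}\big)^{\frac1\eta}.
\]

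The argument has no real obstacle. The only points needing care are that the hypothesis of boundedness is used precisely to neutralize $Y_n^{(1-\eta)^n}$, and that the exponent of $C$ must be handled by its exact finite sum in case $C<1$.
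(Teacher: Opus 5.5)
Your proof is correct. The paper does not prove this lemma itself; it quotes Lemma 5.2 of the DiBenedetto--Gianazza--Vespri monograph, and your argument is the standard downward iteration
\[
Y_0\le C^{\sum_{i<n}(1-\eta)^i}\,b^{\sum_{i<n}i(1-\eta)^i}\,Y_n^{(1-\eta)^n},
\]
followed by letting $n\to\infty$ and using boundedness of $(Y_n)$. You are also right that the factor $2$ is superfluous, and that using the exact finite geometric sum handles the case $C<1$.
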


\noindent
Finally, we recall some elementary inequalities from \cite[Lemma 2.7]{AABP}:

\begin{lemma}\label{alg}
Let $p\in(1,2)$, $a,b\ge 0$. Then:
\begin{enumroman}
\item\label{alg1} for all $q>1$ there exists $\gamma>0$ s.t.\
\[\Big|a^\frac{p+q-2}{p}-b^\frac{p+q-2}{p}\Big|^p \le \gamma|a-b|^{p-2}(a-b)(a^{q-1}-b^{q-1});\]
\item\label{alg2} for all $q\ge 2$ there exists $\gamma>0$ s.t.\
\[(a+b)^{q-2}|a-b|^p \le \gamma \Big|a^\frac{p+q-2}{p}-b^\frac{p+q-2}{p}\Big|^p.\]
\end{enumroman}
\end{lemma}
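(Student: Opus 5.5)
The statement to prove is Lemma \ref{alg}, the elementary inequalities. Both parts are homogeneous: each side scales like $\lambda^{p+q-2}$ under $(a,b)\mapsto(\lambda a,\lambda b)$. Both sides are also symmetric in $a,b$. The plan is therefore to assume without loss of generality $a>b\ge 0$ (the case $a=b$ is trivial), normalize $a=1$, set $b=\theta\in[0,1)$, and reduce each claim to a bound on a one-variable function of $\theta$. Set $m=(p+q-2)/p$. Since $q>1$ we have $p+q-2>p-1>0$, so $m>0$.

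For \ref{alg1} the reduced claim is
\[(1-\theta^m)^p \le \gamma(1-\theta)^{p-1}(1-\theta^{q-1}), \qquad \theta\in[0,1).\]
The ratio $F(\theta)=(1-\theta^m)^p/\big((1-\theta)^{p-1}(1-\theta^{q-1})\big)$ is continuous and positive on $[0,1)$, with $F(0)=1$. It therefore suffices to show that $F$ stays bounded as $\theta\to1^-$. By Taylor expansion, $1-\theta^m\sim m(1-\theta)$ and $1-\theta^{q-1}\sim(q-1)(1-\theta)$, using $q-1>0$. Hence
\[F(\theta)\to \frac{m^p}{q-1}<\infty.\]
Compactness of $[0,1]$, after extending $F$ continuously to $\theta=1$, then gives $\gamma=\max F$, which depends only on $p,q$.

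For \ref{alg2} the reduced claim is
\[(1+\theta)^{q-2}(1-\theta)^p\le\gamma(1-\theta^m)^p.\]
Here $q\ge2$ gives $(1+\theta)^{q-2}\le 2^{q-2}$, so it suffices to show $1-\theta\le \gamma'(1-\theta^m)$. Since $q\ge2$ implies $m\ge1$, we have $\theta^m\le\theta$ on $[0,1]$, so $1-\theta^m\ge 1-\theta$. The claim follows with $\gamma=2^{q-2}$. In fact, for any $m>0$ one could instead use the ratio $(1-\theta)/(1-\theta^m)$, which is continuous and tends to $1/m$ as $\theta\to1$.

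No step is a real obstacle; the only care needed is the boundary behaviour at $\theta\to1$ in \ref{alg1}. There we check that the vanishing orders match: the numerator vanishes to order $p$, and the denominator to order $(p-1)+1=p$. This matching is exactly what forces the exponent $m=(p+q-2)/p$. When $a=b=0$ all terms vanish, and the sign convention $|a-b|^{p-2}(a-b)=0$ at $a=b$ handles the degenerate case.
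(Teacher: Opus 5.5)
Your proof is correct. The paper gives no proof of its own to compare with: it quotes the lemma from \cite[Lemma 2.7]{AABP}, so your argument supplies a self-contained justification of a cited fact.

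The reduction is sound. Both sides of each inequality are symmetric in $a,b$ and homogeneous of degree $p+q-2$, and $m=(p+q-2)/p>0$ because $q>1$. In part (i), the ratio $F$ is continuous and positive on $[0,1)$, with $F(0)=1$ and $F(\theta)\to m^p/(q-1)$ as $\theta\to1^-$. This limit is finite because numerator and denominator both vanish to order exactly $p$. Hence $\gamma=\sup_{[0,1)}F$ works and depends only on $p,q$, although it is not explicit.

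In part (ii) your route gives the explicit constant $\gamma=2^{q-2}$, since $m\ge1$ forces $\theta^m\le\theta$. Your closing remark also shows that $q\ge2$ is used there only to bound $(1+\theta)^{q-2}$. For $1<q<2$ that factor is at most $1$, and $(1-\theta)/(1-\theta^m)$ stays bounded, so part (ii) still holds whenever $a+b>0$. That is more than the paper needs, since it only invokes part (ii) with $q=r\ge2$ in Lemma \ref{bl}.
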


\noindent
We will also use the following weighted Young's inequality: for all $q>1$, there exists $\gamma>0$ s.t.\ for all $a,b\ge 0$, $\eps\in(0,1)$
\beq\label{young}
ab \le \eps a^q+\frac{\gamma}{\eps^{q-1}}b^{q'}.
\eeq
We note the following consequence of \eqref{young} (see \cite[eq.\ (2.4)]{L}): there exists $\gamma>0$ s.t.\ for all $a\ge b\ge 0$, $\eps\in(0,1)$
\beq\label{liao}
a^p-b^p \le \eps a^p+\frac{\gamma}{\eps^{p-1}}(a-b)^p.
\eeq

\section{$L^r$-$L^\infty$ estimate}\label{sec3}

\noindent
In this section we display the proof of Theorem \ref{lr}, which is divided into several steps. 
To ease notation we set, for $\sigma \in (0,1)$,  the perturbative term 
\[P_{\sigma} = \Big(\frac{t}{\rho^{ps}}\Big)^\frac{1}{2-p}\max\Big\{1,\,\Big(\frac{t}{\rho^{ps}}\Big)^\frac{1-p}{2-p}{\rm Tail}\Big(u_+,x_0,\sigma \rho,0,t\Big)^{p-1}\Big\}\,.\]
We first establish two preliminary lemmas, one for the supercritical regime $p>p_c$ and the other for the subcritical regime $p\le p_c$. As it will appear clear, in the first case the local boundedness is a consequence of the definition of local weak solution. We state our result for nonnegative solutions, in order to ease the reading, though minor standard modifications clarify that the result is still valid for signed super or sub-solutions, see Remark \ref{RMK-non-negative}.

\begin{lemma}\label{sup}
Let $u$ be a local weak solution of \eqref{eq}, non-negative in $B_{4\rho}(x_0)\times(0,t)\subset\Omega_T$. Let $p_c<p<2$, $r\in[1,2]$ s.t.\ $\lambda_r>0$, $\sigma\in(0,1)$, $t_0\in(0,t]$. Then, there exist $\alpha,\gamma>0$ depending on the data s.t.\
\[\sup_{B_{\sigma\rho}(x_0)\times(t-\sigma t_0,t)}\,u \le \frac{\gamma}{[\sigma(1-\sigma)]^\alpha}\Big[\sup_{B_\rho(x_0)\times(t-t_0,t)}\,u^\frac{ps(2-r)}{\lambda_2}\Big]\Big[\fiint_{B_\rho(x_0)\times(t-t_0,t)}u^r(x,\tau)\,dx\,d\tau\Big]^\frac{ps}{\lambda_2}\Big(\frac{t_0}{\rho^{ps}}\Big)^{-\frac{N}{\lambda_2}}+P_{\sigma}.\]
\end{lemma}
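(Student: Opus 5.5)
We run a De Giorgi iteration on shrinking cylinders with increasing levels, based on the energy estimate of Proposition \ref{ee} and the embedding of Proposition \ref{emb}. Fix $k>0$, to be chosen, and set for $n\in\N$
\[\rho_n=\sigma\rho+\frac{(1-\sigma)\rho}{2^n},\qquad \theta_n=\sigma t_0+\frac{(1-\sigma)t_0}{2^n},\qquad k_n=k\Big(1-\frac{1}{2^{n+1}}\Big),\]
together with intermediate radii $\tilde\rho_n=(\rho_n+\rho_{n+1})/2$ and cylinders $Q_n=B_{\rho_n}(x_0)\times(t-\theta_n,t)$. We apply Proposition \ref{ee} to $w_n=(u-k_n)_+$ between $Q_{n+1}$ and $Q_n$. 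On $\{u>k_{n+1}\}$ we have $w_n\ge k2^{-n-2}$, so that $w_n^p\le w_n^2(k2^{-n-2})^{p-2}$ and $w_n\le w_n^2(k2^{-n-2})^{-1}$. The right-hand side is then bounded by
\[\gamma\frac{b^n}{[\sigma(1-\sigma)]^{c}}\Big[\frac{1}{t_0}+\frac{k^{p-2}}{\rho^{ps}}+\frac{{\rm Tail}(u_+,x_0,\sigma\rho,0,t)^{p-1}}{k\rho^{ps}}\Big]\iint_{Q_n}w_n^2.\]
In the tail term we use $w_n\le u_+$ and $\rho_n\ge\sigma\rho$, which costs a power of $\sigma^{-1}$.

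Next we cut off $w_{n+1}$ by a function supported in $B_{\tilde\rho_n}$ and apply Proposition \ref{emb} with exponent $m=(N+2s)p/N$. The supercritical assumption $p>p_c$ is exactly $m>2$. Using $\iint w^2\le\big(\iint w^m\big)^{2/m}|Q\cap\{w>0\}|^{1-2/m}$ and Chebyshev, $|\{u>k_{n+1}\}\cap Q_n|\le(2^{n+2}/k)^2\iint_{Q_n}w_n^2$, we get for $Y_n=\fiint_{Q_n}w_n^2$ a recursion
\[Y_{n+1}\le \gamma\frac{b^n}{[\sigma(1-\sigma)]^{c}}\,\mathcal{A}(k)\,Y_n^{1+\eta},\]
with $\eta=ps/N\cdot(\ldots)>0$ (formally $\eta$ comes from $2/m$ and the $ps/N$ power). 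Here $\mathcal{A}(k)$ collects the powers of $k$, $t_0$ and $\rho$. We first impose the alternative
\[k\ge P_\sigma\quad\Big(\text{i.e. } k\ge (t_0/\rho^{ps})^{\frac{1}{2-p}}\text{ and } k^{p-1}\ge \big(\tfrac{t}{\rho^{ps}}\big)^{\frac{p-1}{2-p}}\cdot\tfrac{\rho^{ps}}{t}{\rm Tail}^{p-1}\Big).\]
Under it the $k^{p-2}/\rho^{ps}$ and tail contributions are dominated by $1/t_0$; this uses $t_0\le t$, or alternatively one replaces $t$ by $t_0$ in the first branch, since $P_\sigma$ is monotone in that direction. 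We need to double-check this point. The bracket then reduces to $\gamma/t_0$. Lemma \ref{fc} gives $Y_n\to0$, hence $u\le k$ on $Q_\infty$, provided $Y_0\le(\ldots)^{-1/\eta}$. Tracking the homogeneity, this smallness condition reads
\[\fiint_{Q_0}u^2\le \gamma^{-1}[\sigma(1-\sigma)]^{c'}\,k^{\frac{\lambda_2}{ps}}\Big(\frac{t_0}{\rho^{ps}}\Big)^{\frac{N}{ps}},\]
which is the usual intrinsic scaling: $\lambda_2=N(p-2)+2ps$ appears as the net power of $k$ once $k^{2}$ from Chebyshev is balanced against $k^{p-2}$-type terms.

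It therefore suffices to take $k$ equal to $P_\sigma$ plus
\[\gamma[\sigma(1-\sigma)]^{-\alpha}\Big(\fiint u^2\Big)^{ps/\lambda_2}(t_0/\rho^{ps})^{-N/\lambda_2}.\]
Finally, for $r\in[1,2]$ we write $u^2\le(\sup_{B_\rho\times(t-t_0,t)}u)^{2-r}u^r$, which yields precisely the factor $\sup u^{ps(2-r)/\lambda_2}$ in the statement. The local boundedness needed to make that supremum meaningful is obtained from the case $r=2$, where no supremum appears and only $u\in L^2_{\rm loc}$ (from the definition of solution) is used. This is why supercriticality yields boundedness for free.

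The main obstacle is the bookkeeping in the recursion. The mixed homogeneity (the time term $1/t_0$ against the space terms $k^{p-2}\rho^{-ps}$) must be absorbed into the single intrinsic quantity $t_0/\rho^{ps}$ with the correct exponent $\lambda_2$. At the same time the tail term must be forced under $P_\sigma$ with the powers of $\sigma(1-\sigma)$ kept explicit. We would first carry out the computation with $k\ge (t_0/\rho^{ps})^{1/(2-p)}$, so that $k^{p-2}\rho^{-ps}\le t_0^{-1}$, and then check the tail alternative separately.
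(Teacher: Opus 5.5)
Your proposal is correct and follows essentially the same route as the paper. Both run a De Giorgi iteration with Proposition \ref{ee} applied at level $k_{n+1}$ (so to $w_{n+1}$, not $w_n$) and compared to $w_n$ via Chebyshev; both then use the embedding of Proposition \ref{emb} with $m=p(N+2s)/N>2$, the measure factor $|Q_{n+1}\cap\{u>k_{n+1}\}|^{\lambda_2/(p(N+2s))}$, Lemma \ref{fc} with $\eta=2s/(N+2s)$, and finally $u^2\le(\sup u)^{2-r}u^r$.

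The point you flagged for double-checking holds. Since $P_\sigma=\max\{(t/\rho^{ps})^{\frac{1}{2-p}},\,(t/\rho^{ps})\,{\rm Tail}(u_+,x_0,\sigma\rho,0,t)^{p-1}\}$, the condition $k\ge P_\sigma$ directly gives $k^{p-2}\rho^{-ps}\le 1/t\le 1/t_0$ and ${\rm Tail}^{p-1}/(k\rho^{ps})\le 1/t\le 1/t_0$. Your parenthetical unpacking of the second branch is miscomputed, but nothing later depends on it.
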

\begin{proof}
Let $k>0$ be a number to be determined later and perform a decreasing iteration by setting for all $n\in\N$
\[\rho_n = \rho\Big(\sigma+\frac{1-\sigma}{2^n}\Big), \ t_n = t_0\Big(\sigma+\frac{1-\sigma}{2^n}\Big), \ k_n = k\Big(1-\frac{1}{2^n}\Big).\]
In addition we set
\[\hat\rho_n = \frac{3\rho_n+\rho_{n+1}}{4}, \ \tilde\rho_n = \frac{\rho_n+\rho_{n+1}}{2}, \ \check\rho_n = \frac{\rho_n+3\rho_{n+1}}{4},\]
so we have a shrinking geometric configuration and increasing levels.
Indeed, if we define the balls
\[B_n = B_{\rho_n}(x_0), \ \hat B_n = B_{\hat\rho_n}(x_0), \ \tilde B_n = B_{\tilde\rho_n}(x_0), \ \check B_n = B_{\check\rho_n}(x_0),\]
and the cylinders $Q_n=B_n\times(t-t_n,t)$, then \[B_n\supset\hat B_n\supset\tilde B_n\supset\check B_n\supset B_{n+1}, \quad \text{and} \quad  Q_n\supset Q_{n+1},\] with initial cylinder $Q_0=B_\rho(x_0)\times(t-t_0,t)$ and limit cylinder $Q_\infty=B_{\sigma\rho}(x_0)\times(t-\sigma t_0,t)$.
\vskip4pt
\noindent
Next, we set $w_n=(u-k_n)_+$ and define the related energy
\[E_n = \sup_{t-t_{n+1}<\tau<t}\,\int_{\tilde B_n}w_{n+1}^2(x,\tau)\,dx+\int_{t-t_{n+1}}^t\iint_{\tilde B_n\times\tilde B_n}\frac{|w_{n+1}(x,\tau)-w_{n+1}(y,\tau)|^p}{|x-y|^{N+ps}}\,dx\,dy\,d\tau.\]
For any pair of exponents $1\le q_1\le q_2$ and each $n\in\N$ the following Chebychev-type inequality holds true:
\begin{align}\label{sup1}
\iint_{Q_n}w_{n+1}^{q_1}(x,\tau)\,dx\,d\tau &\le \int_{t-t_n}^t\int_{B_n\cap\{u>k_{n+1}\}}\frac{(u(x,\tau)-k_n)^{q_2}}{(k_{n+1}-k_n)^{q_2-q_1}}\,dx\,d\tau \\
\nonumber &\le \frac{\gamma 2^{(q_2-q_1)n}}{k^{q_2-q_1}}\,\iint_{Q_n}w_n^{q_2}(x,\tau)\,dx\,d\tau.
\end{align}
We estimate $E_n$ by applying Proposition \ref{ee} with radii $0<\tilde\rho_n<\rho_n<\rho$, times $0<t_{n+1}<t_n<t$, and level $k_{n+1}$:
\begin{align*}
E_n &\le \frac{\gamma}{t_n-t_{n+1}}\iint_{Q_n}w_{n+1}^2(x,\tau)\,dx\,d\tau+\frac{\gamma\rho_n^{p-ps}}{(\rho_n-\tilde\rho_n)^p}\iint_{Q_n}w_{n+1}^p(x,\tau)\,dx\,d\tau \\
&+ \frac{\gamma\rho_n^N}{(\rho_n-\tilde\rho_n)^{N+ps}}\Big[\iint_{Q_n}w_{n+1}(x,\tau)\,dx\,d\tau\Big]\,{\rm Tail}(w_{n+1},x_0,\rho_n,t-t_n,t)^{p-1} \\
&= I_1+I_2+I_3.
\end{align*}
We estimate separately each term above. For $I_1$ we use the definition of $(t_n)$ and \eqref{sup1} with $q_1=q_2=2$:
\begin{align*}
I_1 &\le \frac{\gamma 2^n}{(1-\sigma)t_0}\iint_{Q_n}w_{n+1}^2(x,\tau)\,dx\,d\tau \\
&\le \frac{\gamma 2^n}{(1-\sigma)t_0}\iint_{Q_n}w_n^2(x,\tau)\,dx\,d\tau.
\end{align*}
For $I_2$ we use the definition of $(\rho_n)$, $(\tilde\rho_n)$, and \eqref{sup1} with $q_1=p$, $q_2=2$:
\begin{align*}
I_2 &\le \frac{\gamma 2^{pn}}{(1-\sigma)^p\rho^{ps}}\iint_{Q_n}w_{n+1}^p(x,\tau)\,dx\,d\tau \\
&\le \frac{\gamma 2^{2n}}{(1-\sigma)^p\rho^{ps}k^{2-p}}\iint_{Q_n}w_n^2(x,\tau)\,dx\,d\tau.
\end{align*}
Finally, for $I_3$ we use \eqref{sup1} with $q_1=1$, $q_2=2$, and we estimate the tail term recalling that $w_{n+1}\le u_+$:
\begin{align*}
I_3 &\le \frac{\gamma 2^{(N+ps)n}}{(1-\sigma)^{N+ps}\rho^{ps}}\Big[\iint_{Q_n}w_{n+1}(x,\tau)\,dx\,d\tau\Big]\Big[\sup_{t-t_{n+1}<\tau<t}\,\rho_n^{ps}\int_{B_n^c}\frac{w_{n+1}^{p-1}(x,\tau)}{|x-x_0|^{N+ps}}\,dx\Big] \\
&\le \frac{\gamma 2^{(N+ps+1)n}}{(1-\sigma)^{N+ps}\rho^{ps}k}\Big[\iint_{Q_n}w_n^2(x,\tau)\,dx\,d\tau\Big]\Big(\frac{\sigma\rho}{\rho_n}\Big)^{-ps}\Big[\sup_{0<\tau<t}\,(\sigma \rho)^{ps}\int_{B_{\sigma \rho}^c(x_0)}\frac{u_+^{p-1}(x,\tau)}{|x-x_0|^{N+ps}}\,dx\Big] \\
&\le \frac{\gamma 2^{(N+ps+1)n}}{(1-\sigma)^{N+ps}(\sigma\rho)^{ps}k}\Big[\iint_{Q_n}w_n^2(x,\tau)\,dx\,d\tau\Big]{\rm Tail}\Big(u_+,x_0,\sigma \rho,0,t\Big)^{p-1}.
\end{align*}
Plugging such estimates into the previous inequality, we find $\alpha,\beta,\gamma>1$ depending on the data s.t.\
\[E_n \le \frac{\gamma 2^{\beta n}}{[\sigma(1-\sigma)]^\alpha}\Big[\frac{1}{t_0}+\frac{1}{\rho^{ps}k^{2-p}}+\frac{1}{\rho^{ps}k}{\rm Tail}\Big(u_+,x_0,\sigma \rho,0,t\Big)^{p-1}\Big]\iint_{Q_n}w_n^2(x,\tau)\,dx\,d\tau.\]
Now we introduce our first condition on $k$:
\beq\label{sup2}
k > P_{\sigma}.
\eeq
So, from the previous inequality and \eqref{sup2} we have for all $n\in\N$
\beq\label{sup3}
E_n \le \frac{\gamma 2^{\beta n}}{[\sigma(1-\sigma)]^\alpha t_0}\iint_{Q_n}w_n^2(x,\tau)\,dx\,d\tau.
\eeq
We aim at an iterative estimate for the sequence
\[Y_n = \fiint_{Q_n}w_n^2(x,\tau)\,dx\,d\tau.\]
Set for all $n\in\N$
\[M_n = \Big|Q_n\cap\{u>k_n\}\Big|^\frac{\lambda_2}{p(N+2s)},\]
which has a positive exponent due to the choice of $p$. To proceed, we fix a cutoff function $\xi_n\in C^\infty_c(\check B_n)$ s.t.\ $\xi_n=1$ in $B_{n+1}$ and satisfies in $\R^N$
\[0 \le \xi_n \le 1, \ |\nabla\xi_n| \le \frac{\gamma 2^n}{(1-\sigma)\rho}.\]
We apply Proposition \ref{emb} to the function $w_{n+1}\xi_n$, with radii $0<\check\rho_n<\tilde\rho_n$ and times $0<t-t_{n+1}<t$,
\begin{align*}
& \int_{t-t_{n+1}}^t\int_{\tilde B_n}|w_{n+1}(x,\tau)\xi_n(x)|^\frac{p(N+2s)}{N}\,dx\,d\tau \le \gamma\Big[\sup_{t-t_{n+1}<\tau<t}\,\fint_{\tilde B_n}|w_{n+1}(x,\tau)\xi_n(x)|^2\,dx\Big]^\frac{ps}{N} \\
&\cdot \Big[\tilde\rho_n^{ps}\int_{t-t_{n+1}}^t\iint_{\tilde B_n\times\tilde B_n}\frac{|w_{n+1}(x,\tau)\xi_n(x)-w_{n+1}(y,\tau)\xi_n(y)|^p}{|x-y|^{N+ps}}\,dx\,dy\,d\tau \\
&+ \Big(\frac{\tilde\rho_n}{\tilde\rho_n-\check\rho_n}\Big)^{N+ps}\int_{t-t_{n+1}}^t\int_{\tilde B_n}|w_{n+1}(x,\tau)\xi_n(x)|^p\,dx\,d\tau\Big].
\end{align*}
By H\"older's inequality and the previous embedding estimate we have
\begin{align}\label{sup4}
\iint_{Q_{n+1}}w_{n+1}^2(x,\tau)\,dx\,d\tau &\le \Big[\iint_{Q_{n+1}}w_{n+1}(x,\tau)^\frac{p(N+2s)}{N}\,dx\,d\tau\Big]^\frac{2N}{p(N+2s)}M_{n+1} \\
\nonumber &\le \Big[\int_{t-t_{n+1}}^t\int_{\tilde B_n}|w_{n+1}(x,\tau)\xi_n(x)|^\frac{p(N+2s)}{N}\,dx\,d\tau\Big]^\frac{2N}{p(N+2s)}M_{n+1} \\
\nonumber &\le \gamma\Big[\sup_{t-t_{n+1}<\tau<t}\,\fint_{\tilde B_n}|w_{n+1}(x,\tau)\xi_n(x)|^2\,dx\Big]^\frac{2s}{N+2s} \\
\nonumber &\cdot \Big[\rho^{ps}\int_{t-t_{n+1}}^t\iint_{\tilde B_n\times\tilde B_n}\frac{|w_{n+1}(x,\tau)\xi_n(x)-w_{n+1}(y,\tau)\xi_n(y)|^p}{|x-y|^{N+ps}}\,dx\,dy\,d\tau \\
\nonumber &+ \frac{2^{(N+ps)n}}{(1-\sigma)^{N+ps}}\int_{t-t_{n+1}}^t\int_{\tilde B_n}|w_{n+1}(x,\tau)\xi_n(x)|^p\,dx\,d\tau\Big]^\frac{2N}{p(N+2s)}M_{n+1}.
\end{align}
To estimate the 'gradient' term above, we split the integrand and use the properties of $\xi_n$:
\begin{align*}
& \int_{t-t_{n+1}}^t\iint_{\tilde B_n\times\tilde B_n}\frac{|w_{n+1}(x,\tau)\xi_n(x)-w_{n+1}(y,\tau)\xi_n(y)|^p}{|x-y|^{N+ps}}\,dx\,dy\,d\tau \\
&\le \gamma\int_{t-t_{n+1}}^t\iint_{\tilde B_n\times\tilde B_n}\frac{|w_{n+1}(x,\tau)-w_{n+1}(y,\tau)|^p}{|x-y|^{N+ps}}\xi_n^p(y)\,dx\,dy\,d\tau \\
&+ \gamma\int_{t-t_{n+1}}^t\iint_{\tilde B_n\times\tilde B_n}w_{n+1}^p(x,\tau)\frac{|\xi_n(x)-\xi_n(y)|^p}{|x-y|^{N+ps}}\,dx\,dy\,d\tau \\
&\le \gamma\int_{t-t_{n+1}}^t\iint_{\tilde B_n\times\tilde B_n}\frac{|w_{n+1}(x,\tau)-w_{n+1}(y,\tau)|^p}{|x-y|^{N+ps}}\,dx\,dy\,d\tau \\
&+ \frac{\gamma 2^{pn}}{(1-\sigma)^p\rho^p}\int_{t-t_{n+1}}^t\iint_{\tilde B_n\times\tilde B_n}\frac{w_{n+1}^p(x,\tau)}{|x-y|^{N+ps-p}}\,dx\,dy\,d\tau \\
&\le \gamma E_n+\frac{\gamma 2^{pn}}{(1-\sigma)^p\rho^p}\Big[\int_{t-t_{n+1}}^t\int_{\tilde B_n}w_{n+1}^p(x,\tau)\,dx\,d\tau\Big]\Big[\sup_{x\in\tilde B_n}\,\int_{\tilde B_n}\frac{dy}{|x-y|^{N+ps-p}}\Big] \\
&\le \gamma E_n+\frac{\gamma 2^{pn}}{(1-\sigma)^p\rho^{ps}}\int_{t-t_{n+1}}^t\int_{\tilde B_n}w_{n+1}^p(x,\tau)\,dx\,d\tau.
\end{align*}
Plug the last inequality into \eqref{sup4}, taking into account the mean value and the definition of $E_n$:
\begin{align*}
\iint_{Q_{n+1}}w_{n+1}^2(x,\tau)\,dx\,d\tau &\le \gamma\rho^{-\frac{2Ns}{N+2s}}\Big[\sup_{t-t_{n+1}<\tau<t}\,\int_{\tilde B_n}w_{n+1}^2(x,\tau)\,dx\Big]^\frac{2s}{N+2s} \\
&\cdot \Big[\rho^{ps}E_n+\frac{2^{\beta n}}{(1-\sigma)^\alpha}\int_{t-t_{n+1}}^t\int_{\tilde B_n}w_{n+1}^p(x,\tau)\,dx\,d\tau\Big]^\frac{2N}{p(N+2s)}M_{n+1} \\
&\le \frac{\gamma 2^{\beta n}}{(1-\sigma)^\alpha}\rho^{-\frac{2Ns}{N+2s}}E_n^\frac{2s}{N+2s}\Big[\rho^{ps}E_n+\int_{t-t_{n+1}}^t\int_{\tilde B_n}w_{n+1}^p(x,\tau)\,dx\,d\tau\Big]^\frac{2N}{p(N+2s)}M_{n+1},
\end{align*}
with different $\alpha,\beta,\gamma>1$ depending on the data. We use \eqref{sup3} on $E_n$, \eqref{sup1} with $q_1=p$ and $q_2=2$, and again \eqref{sup2} to estimate $k$:
\begin{align*}
\iint_{Q_{n+1}}w_{n+1}^2(x,\tau)\,dx\,d\tau &\le \frac{\gamma 2^{\beta n}}{[\sigma(1-\sigma)]^\alpha}\rho^{-\frac{2Ns}{N+2s}}t_0^{-\frac{2s}{N+2s}}\Big[\iint_{Q_n}w_n^2(x,\tau)\,dx\,d\tau\Big]^\frac{2s}{N+2s} \\
&\cdot \Big[\frac{\rho^{ps}}{t_0}\iint_{Q_n}w_n^2(x,\tau)\,dx\,d\tau+\iint_{Q_n}w_{n+1}^p(x,\tau)\,dx\,d\tau\Big]^\frac{2N}{p(N+2s)}M_{n+1} \\
&\le \frac{\gamma 2^{\beta n}}{[\sigma(1-\sigma)]^\alpha}\rho^{-\frac{2Ns}{N+2s}}t_0^{-\frac{2s}{N+2s}}\Big(\frac{\rho^{ps}}{t_0}+\frac{1}{k^{2-p}}\Big)^\frac{2N}{p(N+2s)}\Big[\iint_{Q_n}w_n^2(x,\tau)\,dx\,d\tau\Big]^\frac{2(N+ps)}{p(N+2s)}M_{n+1} \\
&\le \frac{\gamma 2^{\beta n}}{[\sigma(1-\sigma)]^\alpha}t_0^{-\frac{2(N+ps)}{p(N+2s)}}\Big[\iint_{Q_n}w_n^2(x,\tau)\,dx\,d\tau\Big]^\frac{2(N+ps)}{p(N+2s)}M_{n+1}.
\end{align*}
The inequality above is 'almost' the desired iterative estimate, due to the presence of the measure term $M_{n+1}$. The latter can be estimated as follows, via a version of \eqref{sup1} with exponents $q_1=0$, $q_2=2$:
\begin{align*}
\Big|Q_{n+1}\cap\{u>k_{n+1}\}\Big| &\le \int_{t-t_{n+1}}^t\int_{B_n\cap\{u>k_{n+1}\}}\frac{(u(x,\tau)-k_n)^2}{(k_{n+1}-k_n)^2}\,dx\,d\tau \\
&\le \frac{2^{2n+2}}{k^2}\iint_{Q_n}w_n^2(x,\tau)\,dx\,d\tau.
\end{align*}
Combining the last two inequalities, and recalling that $|Q_n|$, $|Q_{n+1}|$ are comparable, we get the iterative estimate
\begin{align*}
\fiint_{Q_{n+1}}w_{n+1}^2(x,\tau)\,dx\,d\tau &\le \frac{1}{|Q_{n+1}|}\frac{\gamma 2^{\beta n}}{[\sigma(1-\sigma)]^\alpha}t_0^{-\frac{2(N+ps)}{p(N+2s)}}k^{-\frac{2\lambda_2}{p(N+2s)}}\Big[\iint_{Q_n}w_n^2(x,\tau)\,dx\,d\tau\Big]^{1+\frac{2s}{N+2s}} \\
&\le \frac{\gamma 2^{\beta n}}{[\sigma(1-\sigma)]^\alpha}\Big(\frac{t_0}{\rho^{ps}}\Big)^{-\frac{2N}{p(N+2s)}}k^{-\frac{2\lambda_2}{p(N+2s)}}\Big[\fiint_{Q_n}w_n^2(x,\tau)\,dx\,d\tau\Big]^{1+\frac{2s}{N+2s}}.
\end{align*}
Our next step consists in applying Lemma \ref{fc} to the sequence $(Y_n)$. Set
\[C = \frac{\gamma}{[\sigma(1-\sigma)]^\alpha}\Big(\frac{t_0}{\rho^{ps}}\Big)^{-\frac{2N}{p(N+2s)}}k^{-\frac{2\lambda_2}{p(N+2s)}}, \ b = 2^\beta, \eta = \frac{2s}{N+2s}.\]
Clearly $C>0$, $b>1$, and $\eta\in(0,1)$ are independent of $n$. Our iterative estimate then rephrases as
\beq\label{sup5}
Y_{n+1} \le Cb^nY_n^{1+\eta}.
\eeq
Non we give a precise definition of $k$ by setting
\[k = \Big[\frac{\gamma 2^\frac{\beta(N+2s)}{2s}}{[\sigma(1-\sigma)]^\alpha}\Big]^\frac{p(N+2s)}{2\lambda_2}\Big[\fiint_{Q_0}u^2(x,\tau)\,dx\,d\tau\Big]^\frac{ps}{\lambda_2}\Big(\frac{t_0}{\rho^{ps}}\Big)^{-\frac{N}{\lambda_2}}+P_{\sigma}.\]
Such choice clearly complies with \eqref{sup2}. In addition, recalling that $u\ge 0$ in $Q_0$, we have
\begin{align}\label{sup6}
Y_0 &= \fiint_{Q_0}u^2(x,\tau)\,dx\,d\tau \\
\nonumber &\le \Big[\frac{\gamma}{[\sigma(1-\sigma)]^\alpha}\Big(\frac{t_0}{\rho^{ps}}\Big)^{-\frac{2N}{p(N+2s)}}k^{-\frac{2\lambda_2}{p(N+2s)}}\Big]^{-\frac{N+2s}{2s}} 2^{-\beta(\frac{N+2s}{2s})^2} \\
\nonumber &= C^{-\frac{1}{\eta}}b^{-\frac{1}{\eta^2}}.
\end{align}
Due to \eqref{sup5} and \eqref{sup6} we may apply Lemma \ref{fc}, yielding $Y_n\to 0$ as $n\to\infty$. Passing to the limit we have
\[\fiint_{Q_\infty}(u(x,\tau)-k)_+^2\,dx\,d\tau = 0,\]
implying $u\le k$ almost everywhere in $Q_\infty$. Thus, recalling our choice of $k$, we have for convenient $\alpha,\gamma>1$ depending on the data
\begin{equation} \label{bounded-solutions}
\sup_{Q_\infty}\,u \le \frac{\gamma}{[\sigma(1-\sigma)]^\alpha}\Big[\fiint_{Q_0}u^2(x,\tau)\,dx\,d\tau\Big]^\frac{ps}{\lambda_2}\Big(\frac{t_0}{\rho^{ps}}\Big)^{-\frac{N}{\lambda_2}}+P_{\sigma}\,. \end{equation}
Now, we use the local boundedness implied by \eqref{bounded-solutions} plus our assumption on $r$, that is equivalent to
\[\frac{N(2-p)}{ps} < r <2,\]
which is an admissible range since $p>p_c$, to  obtain
\begin{equation*}
\sup_{Q_\infty}\,u \le \frac{\gamma}{[\sigma(1-\sigma)]^\alpha}\Big[\sup_{Q_0}\,u^\frac{ps(2-r)}{\lambda_2}\Big]\Big[\fiint_{Q_0}u^r(x,\tau)\,dx\,d\tau\Big]^\frac{ps}{\lambda_2}\Big(\frac{t_0}{\rho^{ps}}\Big)^{-\frac{N}{\lambda_2}}+P_{\sigma}\,.
\end{equation*}
Recalling the definitions of $Q_\infty$ and $Q_0$, respectively, we conclude.
\end{proof}

\begin{remark}\label{RMK-Bundedness} Formula \eqref{bounded-solutions} alone shows that local weak solutions are locally bounded in the range $p_c<p\leq 2$. We observe that if $r>2$, then it is possible to bound the supremum of $u$ with its $L^r$ norm by a simple use of H\"older inequality.
    
\end{remark}
\noindent
We show a similar Lemma for the sub-critical regime, this time assuming $u$ to be locally bounded. 

\begin{lemma}\label{sub}
Let $u$ be a locally bounded local weak solution of \eqref{eq}, non-negative in $B_{4\rho}(x_0)\times(0,t)\subset\Omega_T$. Let $1<p\le p_c$  and $\lambda_r>0$. Let us fix $\sigma\in(0,1)$ and $t_0\in(0,t]$. Then, there exist $\alpha,\gamma>0$ depending on the data and $r$, s.t.\
\begin{align*}
\sup_{B_{\sigma\rho}(x_0)\times(t-\sigma t_0,t)}\,u &\le \frac{\gamma}{[\sigma(1-\sigma)]^\alpha}\Big[\sup_{B_\rho(x_0)\times(t-t_0,t)}u^\frac{Nr-p(N+2s)}{(r-2)(N+ps)}\Big]\Big[\fiint_{B_\rho(x_0)\times(t-t_0,t)}u^r(x,\tau)\,dx\,d\tau\Big]^\frac{ps}{(r-2)(N+ps)} \\
&\cdot \Big(\frac{t_0}{\rho^{ps}}\Big)^{-\frac{N}{(r-2)(N+ps)}}+P_{\sigma}.
\end{align*}
Moreover, we have that $\gamma$, $\alpha \to +\infty$ as $r \to 2$, while both constants remain stable when $r \to +\infty$.
\end{lemma}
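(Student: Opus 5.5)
We run the same De Giorgi scheme as in Lemma \ref{sup}, but we iterate the $L^r$ averages. In the subcritical regime $\lambda_2\le 0$, so the measure factor $M_n$ of Lemma \ref{sup} now carries a nonpositive exponent and cannot be used. Note first that $\lambda_r>0$ and $p\le p_c$ force $r>N(2-p)/(ps)\ge 2$, and also $r>2\ge p(N+2s)/N$; this is where the factor $r-2$ in the exponents comes from. We keep the sequences $\rho_n,t_n,k_n$, the intermediate radii $\hat\rho_n,\tilde\rho_n,\check\rho_n$, the cylinders $Q_n$, the functions $w_n=(u-k_n)_+$, the energy $E_n$ and the cutoffs $\xi_n$ exactly as before. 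The new iterated quantity is
\[Y_n=\fiint_{Q_n}w_n^r(x,\tau)\,dx\,d\tau,\]
and we set $M:=\sup_{Q_0}u<\infty$, which is finite by local boundedness.

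\emph{Energy bound.} Proposition \ref{ee} gives, verbatim as before, the three terms $I_1,I_2,I_3$. We again impose $k>P_\sigma$, as in \eqref{sup2}, which yields \eqref{sup3}:
\[E_n\le \frac{\gamma 2^{\beta n}}{[\sigma(1-\sigma)]^\alpha t_0}\iint_{Q_n}w_{n+1}^2\,dx\,d\tau .\]
We then apply the Chebyshev inequality \eqref{sup1} with $q_1=2$, $q_2=r$, which is admissible since $r>2$:
\[\iint_{Q_n}w_{n+1}^2 \le \gamma\Big(\frac{2^n}{k}\Big)^{r-2}\iint_{Q_n}w_n^r .\]

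\emph{Embedding step.} Since $r\ge p(N+2s)/N$ and $w_{n+1}\le M$ on $Q_0$, we have
\[\iint_{Q_{n+1}}w_{n+1}^r\le M^{\,r-\frac{p(N+2s)}{N}}\iint_{Q_{n+1}}w_{n+1}^{\frac{p(N+2s)}{N}} .\]
The last integral is controlled via Proposition \ref{emb} applied to $w_{n+1}\xi_n$. The fractional gradient of the product is split exactly as in Lemma \ref{sup}, and the lower-order $L^p$ term is absorbed using \eqref{sup1} with $q_1=p$, $q_2=2$ together with $k^{2-p}\ge t_0/\rho^{ps}$. This bounds the integral by
\[\gamma\,2^{\beta n}\rho^{-\frac{Nps}{N}}\rho^{ps}E_n^{1+\frac{ps}{N}},\]
up to the appropriate powers of $\rho$ and $t_0$. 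Inserting the energy bound, and tracking the powers of $\rho$ and $t_0$ via the averages, we expect to reach
\[Y_{n+1}\le \frac{\gamma\,b^n}{[\sigma(1-\sigma)]^\alpha}\,M^{\,r-\frac{p(N+2s)}{N}}\,k^{-\frac{(r-2)(N+ps)}{N}}\Big(\frac{t_0}{\rho^{ps}}\Big)^{-\frac{N+ps}{N}}\cdot\Big(\frac{t_0}{\rho^{ps}}\Big)^{\frac{ps}{N}\cdot(\ldots)}\,Y_n^{1+\frac{ps}{N}} .\]
The precise $t_0/\rho^{ps}$ exponent is to be fixed by the scaling check below.

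\emph{Choice of $k$.} We apply Lemma \ref{fc} with $\eta=ps/N$. The condition $Y_0\le C^{-1/\eta}b^{-1/\eta^2}$ is met by choosing
\[k=\frac{\gamma}{[\sigma(1-\sigma)]^\alpha}\,M^{\frac{Nr-p(N+2s)}{(r-2)(N+ps)}}\Big[\fiint_{Q_0}u^r\Big]^{\frac{ps}{(r-2)(N+ps)}}\Big(\frac{t_0}{\rho^{ps}}\Big)^{-\frac{N}{(r-2)(N+ps)}}+P_\sigma .\]
Then $Y_n\to0$, so $u\le k$ on $Q_\infty$, which is the claim. Since $k$ was raised to the power $(r-2)(N+ps)/N$ and then extracted, the constants $\gamma^{N/((r-2)(N+ps))}$ and $b^{\ldots/(r-2)}$ blow up as $r\to2$. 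As $r\to\infty$ these exponents tend to zero while the Chebyshev constants $2^{(r-2)n}$ are absorbed in the same root, so the constants stay bounded.

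\emph{Main obstacle.} The delicate point is the bookkeeping of the powers of $\rho$ and $t_0$ (equivalently, the dimensional consistency). Here the $L^p$ term of the embedding must be dominated via $k>P_\sigma$ in the regime $r>2$, where Chebyshev is now used with $q_2=r$ rather than $q_2=2$. I would first verify the final exponents by a scaling check on $u\mapsto \lambda u$, $x\mapsto\rho x$, $t\mapsto t_0 t$ before fixing the constants.
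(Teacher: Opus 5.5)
Your architecture matches the paper's proof: energy estimate homogenized by Chebyshev, the embedding of Proposition~\ref{emb} at exponent $p(N+2s)/N\le 2<r$, the factor $(\sup_{Q_0}u)^{r-p(N+2s)/N}$ in place of the measure term $M_{n+1}$ of Lemma~\ref{sup}, Lemma~\ref{fc} with $\eta=ps/N$, and the same $k$. Averaging $Y_n$ instead of using $\iint_{Q_n}w_n^r$ is immaterial. No idea is missing, but two Chebyshev steps are wrong as written and should be done as in the paper.

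First, \eqref{sup3} has $\iint_{Q_n}w_n^2$ on the right, not $\iint_{Q_n}w_{n+1}^2$. The terms $I_2,I_3$ contain $\iint_{Q_n}w_{n+1}^p$ and $\iint_{Q_n}w_{n+1}$, and these cannot be dominated by $\iint_{Q_n}w_{n+1}^2$ at the same level. Conversely, starting from the correct \eqref{sup3}, your $(q_1,q_2)=(2,r)$ step only reaches $w_{n-1}$ on $Q_{n-1}$. The fix is to apply \eqref{sup1} directly to $I_1,I_2,I_3$ with $(q_1,q_2)=(2,r),(p,r),(1,r)$. The coefficients become $\frac{1}{t_0k^{r-2}}$, $\frac{1}{\rho^{ps}k^{r-p}}$ and $\frac{1}{\rho^{ps}k^{r-1}}{\rm Tail}(u_+,x_0,\sigma\rho,0,t)^{p-1}$, each times $2^{\beta n}[\sigma(1-\sigma)]^{-\alpha}$. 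The condition $k>P_\sigma$ gives $k^{2-p}>t/\rho^{ps}\ge t_0/\rho^{ps}$ and $k>\frac{t}{\rho^{ps}}{\rm Tail}(u_+,x_0,\sigma\rho,0,t)^{p-1}$, so all three are bounded by $\frac{1}{t_0k^{r-2}}$; this is \eqref{sub2}.

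Second, in the embedding step the $L^p$ term needs $(q_1,q_2)=(p,r)$, not $(p,2)$. The latter leaves $\iint_{Q_n}w_n^2$, which is controlled neither by $E_n$ nor by $Y_n$. With $(p,r)$ the term is at most $\frac{\gamma 2^{\beta n}\rho^{ps}}{[\sigma(1-\sigma)]^{\alpha}t_0k^{r-2}}\iint_{Q_n}w_n^r$, i.e.\ $\rho^{ps}$ times the energy bound. The embedding then yields $\iint_{Q_{n+1}}w_{n+1}^{p(N+2s)/N}\le \frac{\gamma 2^{\beta n}}{[\sigma(1-\sigma)]^{\alpha}}\big[\frac{1}{t_0k^{r-2}}\iint_{Q_n}w_n^r\big]^{(N+ps)/N}$, not a bound by $E_n^{1+ps/N}$ alone. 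This also settles your open exponent. After dividing by $|Q_{n+1}|\simeq\rho^N t_0$, the total power of $t_0/\rho^{ps}$ in $C$ for the averaged $Y_n$ is exactly $-1$, so your $(\ldots)$ equals $1$. That agrees with your scaling check and with the $k$ you chose.
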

\begin{proof}
First note that the assumption $\lambda_r>0$, along with $p\le p_c$, implies
\[r > \frac{N(2-p)}{ps} \ge 2.\]
Let $k>0$ be a number to be determined later and define sequences of cylinders and levels, functions $w_n$ and energies $E_n$, as in Lemma \ref{sup}. Arguing as in Lemma \ref{sup}, applying Proposition \ref{ee} and \eqref{sup1} with several pairs of exponents (here we use $r>2$), we find
\[E_n \le \frac{\gamma 2^{\beta n}}{[\sigma(1-\sigma)]^\alpha}\Big[\frac{1}{t_0k^{r-2}}+\frac{1}{\rho^{ps}k^{r-p}}+\frac{1}{\rho^{ps}k^{r-1}} {\rm Tail}\Big(u_+,x_0,\sigma \rho,0,t\Big)^{p-1}\Big]\iint_{Q_n}w_n^r(x,\tau)\,dx\,d\tau,\]
with $\alpha,\beta,\gamma>0$ depending on the data and $r$. Again if we assume
\beq\label{sub1}
k >P_{\sigma},
\eeq
then the $n$-th energy is homogeneously controlled by
\beq\label{sub2}
E_n \le \frac{\gamma 2^{\beta n}}{[\sigma(1-\sigma)]^\alpha t_0k^{r-2}}\iint_{Q_n}w_n^r(x,\tau)\,dx\,d\tau.
\eeq
This time we will derive an iterative estimate for the following sequence (with no mean value):
\[Y_n = \iint_{Q_n}w_n^r(x,\tau)\,dx\,d\tau.\]
Define the cutoff function $\xi_n$ as in Lemma \ref{sup}. Then, apply Proposition \ref{emb} to the function $w_{n+1}\xi_n$, the properties of $\xi_n$, \eqref{sub1}, and \eqref{sub2}:
\begin{align*}
 \int_{t-t_{n+1}}^t\int_{\tilde B_n}|w_{n+1}(x,\tau)&\xi_n(x)|^\frac{p(N+2s)}{N}\,dx\,d\tau \le \gamma\Big[\sup_{t-t_{n+1}<\tau<t}\,\fint_{\tilde B_n}|w_{n+1}(x,\tau)\xi_n(x)|^2\,dx\Big]^\frac{ps}{N} \\
&\cdot \Big[\tilde\rho_n^{ps}\int_{t-t_{n+1}}^t\iint_{\tilde B_n\times\tilde B_n}\frac{|w_{n+1}(x,\tau)\xi_n(x)-w_{n+1}(x,\tau)\xi_n(y)|^p}{|x-y|^{N+ps}}\,dx\,dy\,d\tau \\
&+ \Big(\frac{\tilde\rho_n}{\tilde\rho_n-\check\rho_n}\Big)^{N+ps}\int_{t-t_{n+1}}^t\int_{\tilde B_n}|w_{n+1}(x,\tau)\xi_n(x)|^p\,dx\,d\tau\Big] \\
&\le \frac{\gamma}{\tilde\rho_n^{ps}}E_n^\frac{ps}{N}\Big[\tilde\rho_n^{ps}E_n+\Big(\frac{\tilde\rho_n}{\tilde\rho_n-\check\rho_n}\Big)^{N+ps}\iint_{Q_n}w_{n+1}^p(x,\tau)\,dx\,d\tau\Big] \\
&\le \frac{\gamma 2^{psn}}{[\sigma(1-\sigma)]^{ps}\rho^{ps}}E_n^\frac{ps}{N}\Big[\rho^{ps}E_n+\frac{2^{(N+ps+r-p)n}}{[\sigma(1-\sigma)]^{N+ps}k^{r-p}}\iint_{Q_n}w_n^r(x,\tau)\,dx\,d\tau\Big] \\
&\le \frac{\gamma 2^{\beta n}}{[\sigma(1-\sigma)]^\alpha}\Big[\frac{1}{t_0k^{r-2}}\iint_{Q_n}w_n^r(x,\tau)\,dx\,d\tau\Big]^\frac{N+ps}{N},
\end{align*}
again with $\alpha,\beta,\gamma>0$ depending on the data and $r$. Further, noting that $p(N+2s)/N\le 2\le r$, recalling that $u$ is assumed locally bounded, and that $\xi_n=1$ in $B_{n+1}$, we get
\begin{align*}
\iint_{Q_{n+1}}w_{n+1}^r(x,\tau)\,dx\,d\tau &\le \Big[\int_{t-t_{n+1}}^t\int_{\tilde B_n}|w_{n+1}(x,\tau)\xi_n(x)|^\frac{p(N+2s)}{N}\,dx\,d\tau\Big]\Big[\sup_{Q_0}\,u\Big]^\frac{Nr-p(N+2s)}{N} \\
&\le \frac{\gamma 2^{\beta n}}{[\sigma(1-\sigma)]^\alpha}\Big[\frac{1}{t_0k^{r-2}}\iint_{Q_n}w_n^r(x,\tau)\,dx\,d\tau\Big]^\frac{N+ps}{N}\Big[\sup_{Q_0}\,u\Big]^\frac{Nr-p(N+2s)}{N},
\end{align*}
with the supremum of $u$ in $Q_0$ term replacing the measure multiplier of the corresponding estimate of Lemma \ref{sup} (here the assumption that $u$ is locally bounded is essential). We will now apply Lemma \ref{fc} to $(Y_n)$, this time with
\[C = \frac{\gamma}{[\sigma(1-\sigma)]^\alpha}\Big(\frac{1}{t_0k^{r-2}}\Big)^\frac{N+ps}{N}\Big[\sup_{Q_0}\,u\Big]^\frac{Nr-p(N+2s)}{N}, \ b = 2^\beta, \ \eta = \frac{ps}{N}.\]
Note that $C,b>1$ and $\eta\in(0,1)$ are independent of $n$. The previous estimate then rephrases as
\beq\label{sub3}
Y_{n+1} \le Cb^nY_n^{1+\eta}.
\eeq
Define $k$ by setting
\[k = \Big[\frac{\gamma 2^\frac{\beta N}{ps}}{[\sigma(1-\sigma)]^\alpha}\Big]^\frac{N}{(r-2)(N+ps)}\Big[\iint_{Q_0}u^r(x,\tau)\,dx\,d\tau\Big]^\frac{ps}{(r-2)(N+ps)}\Big[\sup_{Q_0}\,u\Big]^\frac{Nr-p(N+2s)}{(r-2)(N+ps)}t_0^{-\frac{1}{r-2}}+P_{\sigma},\]
complying in particular with \eqref{sub1}. Then we have
\begin{align}\label{sub4}
Y_0 &= \iint_{Q_0}u^r(x,\tau)\,dx\,d\tau \\
\nonumber &\le \Big[\frac{\gamma}{[\sigma(1-\sigma)]^\alpha}\Big(\frac{1}{t_0k^{r-2}}\Big)^\frac{N+ps}{N}\sup_{Q_0}\,u^\frac{Nr-p(N+2s)}{N}\Big]^{-\frac{N}{ps}}2^{-\beta(\frac{N}{ps})^2} \\
\nonumber &= C^{-\frac{1}{\eta}}b^{-\frac{1}{\eta^2}}.
\end{align}
Due to \eqref{sub3} and \eqref{sub4} we may apply Lemma \ref{fc} and find that $Y_n\to 0$ as $n\to\infty$. Passing to the limit, we get
\[\iint_{Q_\infty}(u(x,\tau)-k)_+^r\,dx\,d\tau = 0,\]
hence $u\le k$ a.e.\ in $Q_\infty$. Therefore, by the definition of $k$ we have
\begin{align*}
\sup_{Q_\infty}\,u &\le \frac{\gamma}{[\sigma(1-\sigma)]^\alpha}\Big[\sup_{Q_0}\,u^\frac{Nr-p(N+2s)}{(r-2)(N+ps)}\Big]\Big[\iint_{Q_0}u^r(x,\tau)\,dx\,d\tau\Big]^\frac{ps}{(r-2)(N+ps)}t_0^{-\frac{1}{r-2}}+P_{\sigma} \\
&\le \frac{\gamma}{[\sigma(1-\sigma)]^\alpha}\Big[\sup_{Q_0}\,u^\frac{Nr-p(N+2s)}{(r-2)(N+ps)}\Big]\Big[\fiint_{Q_0}u^r(x,\tau)\,dx\,d\tau\Big]^\frac{ps}{(r-2)(N+ps)}\Big(\frac{t_0}{\rho^{ps}}\Big)^{-\frac{N}{(r-2)(N+ps)}}+P_{\sigma},
\end{align*}
for some $\alpha,\gamma>0$ depending on the data and $r$. This concludes the proof.
\end{proof}

\noindent
We can now complete the proof of the main result:
\vskip4pt
\noindent
{\em Proof of Theorem \ref{lr} (conclusion).} The second part of the proof is based on a further iterative argument, with increasing cylinders. Fix $\theta\in(0,1)$ (to be determined later) and set $\rho'_0 = \theta\rho$, $t'_0 = \theta t$. Also, set for all $n\in\N$
\[\rho'_n = \rho\Big[\theta+(1-\theta)\sum_{i=1}^n\frac{1}{2^i}\Big], \ t'_n = t\Big[\theta+(1-\theta)\sum_{i=1}^n\frac{1}{2^i}\Big],\]
so that both $(\rho'_n)$ and $(t'_n)$ are increasing and $\rho'_n\to\rho$, $t'_n\to t$ as $n\to\infty$. Also, for all $n\in\N$ we set $Q'_n=B_{\rho'_n}(x_0)\times(t-t'_n,t)$ so $Q'_n\subset Q'_{n+1}$ and the limit cylinder is $Q'_\infty=B_\rho(x_0)\times(0,t)$. Now recall that $u\ge 0$ in $Q_\infty$ and $u$ is locally bounded, so we may define a bounded sequence of positive numbers by setting for all $n\in\N$
\[S_n = \sup_{Q'_n}\,u,\]
We are going to prove an iterative estimate on the sequence $(S_n)$, by considering two cases:
\begin{itemize}[leftmargin=1cm]
\item[$(a)$] If $p_c<p<2$, then recall that $r<2$. We apply Lemma \ref{sup} with radius $\rho'_{n+1}$, time $t_0=t'_{n+1}$, and
\[\sigma = \frac{\theta+(1-\theta)\sum_{i=1}^n 2^{-i}}{\theta+(1-\theta)\sum_{i=1}^{n+1}2^{-i}} \in (0,1),\]
so that $\sigma\rho'_{n+1}=\rho'_n$, $\sigma t'_{n+1}=t'_n$. We get for some $\gamma,\alpha>0$ depending on the data and $\theta$
\begin{align}\label{lr1}
\sup_{Q'_n}\,u &\le \frac{\gamma}{[\sigma(1-\sigma)]^\alpha}\Big[\sup_{Q'_{n+1}}\,u^\frac{ps(2-r)}{\lambda_2}\Big]\Big[\fiint_{Q'_{n+1}}u^r(x,\tau)\,dx\,d\tau\Big]^\frac{ps}{\lambda_2}\Big(\frac{t'_{n+1}}{(\rho'_{n+1})^{ps}}\Big)^{-\frac{N}{\lambda_2}} \\
\nonumber &+ \Big(\frac{t}{(\rho'_{n+1})^{ps}}\Big)^\frac{1}{2-p}\max\Big\{1,\,\Big(\frac{t}{(\rho'_{n+1})^{ps}}\Big)^\frac{1-p}{2-p}{\rm Tail}\Big(u_+,x_0,\rho'_n,0,t\Big)^{p-1}\Big\} \\
\nonumber &= J_1+J_2
\end{align}
Now we estimate $J_1$ and $J_2$, separately. First note that
\[\sigma^\alpha(1-\sigma)^\alpha \ge \frac{(1-\theta)^\alpha}{2^{\alpha(n+2)}}.\]
Also, we have for some $\gamma>0$ independent of $n$ the inequality
\[
\fiint_{Q'_{n+1}}u^r(x,\tau)\,dx\,d\tau = \frac{1}{|Q'_{n+1}|}\iint_{Q'_{n+1}}u^r(x,\tau)\,dx\,d\tau \leq \gamma\fiint_{Q'_\infty}u^r(x,\tau)\,dx\,d\tau.
\]
Therefore, since $\rho'_{n+1}$ and $t'_{n+1}$ are comparable to $\rho$ and $t$, respectively, we find
\[J_1 \le \frac{\gamma 2^{\alpha n}}{(1-\theta)^\alpha}\Big[\sup_{Q'_{n+1}}\,u^\frac{ps(2-r)}{\lambda_2}\Big]\Big[\fiint_{Q'_\infty}u^r(x,\tau)\,dx\,d\tau\Big]^\frac{ps}{\lambda_2}\Big(\frac{t}{\rho^{ps}}\Big)^{-\frac{N}{\lambda_2}}.\]
To estimate $J_2$ we note that $\rho'_n\ge\theta\rho$, hence the tail can be treated as
\[{\rm Tail}\Big(u_+,x_0,\rho'_n,0,t\Big)^{p-1} \le \theta^{-ps}{\rm Tail}\Big(u_+,x_0,\theta\rho,0,t\Big)^{p-1}.\]
Therefore, again by comparability, we have
\[J_2 \le \gamma \theta^{-ps} \Big(\frac{t}{\rho^{ps}}\Big)^\frac{1}{2-p}\max\Big\{1,\,\Big(\frac{t}{\rho^{ps}}\Big)^\frac{1-p}{2-p}{\rm Tail}\Big(u_+,x_0,\theta \rho,0,t\Big)^{p-1}\Big\} =: \gamma \theta^{-ps} P_{\theta}.\]
Plugging such estimates into \eqref{lr1} yields, for some $\alpha,\gamma>0$ depending on the data and $\theta$,
\[S_n \le \gamma 2^{\alpha n}S_{n+1}^\frac{ps(2-r)}{\lambda_2}\Big[\fiint_{Q'_\infty}u^r(x,\tau)\,dx\,d\tau\Big]^\frac{ps}{\lambda_2}\Big(\frac{t}{\rho^{ps}}\Big)^{-\frac{N}{\lambda_2}}+\gamma P_{\theta}.\]
Recall the bounds on $r$ seen in Lemma \ref{sup}. We can then apply Young's inequality \eqref{young} to the first and second terms, with exponents
\[q = \frac{\lambda_2}{ps(2-r)}, \qquad q' = \frac{\lambda_2}{\lambda_r}\]
and $\eps\in(0,1)$ (to be determined later), to find
\[S_n \le \eps S_{n+1}+\frac{\gamma 2^{\alpha n}}{\eps^\alpha}\Big[\fiint_{Q'_\infty}u^r(x,,\tau)\,dx\,d\tau\Big]^\frac{ps}{\lambda_r}\Big(\frac{t}{\rho^{ps}}\Big)^{-\frac{N}{\lambda_r}}+P_{\theta},\]
with $\alpha,\gamma>0$ depending on the data and $\theta$.
\item[$(b)$] If $1<p\le p_c$, then recall that $r>2$. We apply Lemma \ref{sub} with the same choice of $\rho$, $t_0$, $\sigma$ as in case $(a)$, and with analogous estimates we get
\[S_n \le \gamma 2^{\alpha n}S_{n+1}^\frac{Nr-p(N+2s)}{(r-2)(N+ps)}\Big[\fiint_{Q'_\infty}u^r(x,\tau)\,dx\,d\tau\Big]^\frac{ps}{(r-2)(N+ps)}\Big(\frac{t}{\rho^{ps}}\Big)^{-\frac{N}{(r-2)(N+ps)}}+\gamma P_{\theta}.\]
This time we apply \eqref{young} with exponents
\[q = \frac{(r-2)(N+ps)}{Nr-p(N+2s)}, \ q' = \frac{(r-2)(N+ps)}{\lambda_r},\]
and $\eps\in(0,1)$ (to be determined later), to find again
\[S_n \le \eps S_{n+1}+\frac{\gamma 2^{\alpha n}}{\eps^\alpha}\Big[\fiint_{Q'_\infty}u^r(x,,\tau)\,dx\,d\tau\Big]^\frac{ps}{\lambda_r}\Big(\frac{t}{\rho^{ps}}\Big)^{-\frac{N}{\lambda_r}}+P_{\theta},\]
with $\alpha,\gamma>0$ depending on the data, $r$, and $\theta$.
\end{itemize}
In both cases above, we have obtained the same iterative inequality for the sequence $(S_n)$. Iterating such estimate from $0$ to $n$, we have
\begin{align*}
S_0 &\le \eps S_1+\frac{\gamma}{\eps^\alpha}\Big[\fiint_{Q'_\infty}u^r(x,\tau)\,dx\,d\tau\Big]^\frac{ps}{\lambda_r}\Big(\frac{t}{\rho^{ps}}\Big)^{-\frac{N}{\lambda_r}}+\gamma P_{\theta} \\
&\le \eps^2 S_2+\frac{\gamma}{\eps^\alpha}(1+\eps 2^\alpha)\Big[\fiint_{Q'_\infty}u^r(x,\tau)\,dx\,d\tau\Big]^\frac{ps}{\lambda_r}\Big(\frac{t}{\rho^{ps}}\Big)^{-\frac{N}{\lambda_r}}+\gamma(1+\eps)P_{\theta}\ldots \\
&\le \eps^n S_n+\frac{\gamma}{\eps^\alpha}\sum_{i=0}^{n-1}(\eps 2^\alpha)^i\Big[\fiint_{Q'_\infty}u^r(x,\tau)\,dx\,d\tau\Big]^\frac{ps}{\lambda_r}\Big(\frac{t}{\rho^{ps}}\Big)^{-\frac{N}{\lambda_r}}+\gamma\sum_{i=0}^{n-1}\eps^iP_{\theta}.
\end{align*}
As soon as we take $0<\eps<2^{-\alpha}$, the series above are convergent. Thus, recalling that $(S_n)$ is bounded and passing to the limit as $n\to\infty$, we have for some $\gamma>0$ depending on the data, $r$, and $\theta$
\[\sup_{B_{\theta\rho}\times (t-\theta t,\, t)} u \, \, = S_0 \le \gamma\Big[\fiint_{B_\rho(x_0)\times(0,t)}u^r(x,\tau)\,dx\,d\tau\Big]^\frac{ps}{\lambda_r}\Big(\frac{t}{\rho^{ps}}\Big)^{-\frac{N}{\lambda_r}}+\gamma P_{\theta}.\]
Finally, choosing $\theta=1/2$ and recalling the definition of $P_{\theta}$, we find
\begin{align*}
\sup_{B_{\rho/2}(x_0)\times(t/2 ,\, t)}\,u &\le \gamma\Big[\fiint_{B_\rho(x_0)\times(0,t)}u^r(x,\tau)\,dx\,d\tau\Big]^\frac{ps}{\lambda_r}\Big(\frac{t}{\rho^{ps}}\Big)^{-\frac{N}{\lambda_r}} \\
&+ \gamma\Big(\frac{t}{\rho^{ps}}\Big)^\frac{1}{2-p}\max\Big\{1,\,\Big(\frac{t}{\rho^{ps}}\Big)^\frac{1-p}{2-p}{\rm Tail}\Big(u_+,x_0,\frac{\rho}{2},0,t\Big)^{p-1}\Big\},
\end{align*}
with $\gamma>0$ depending on the data and $r$, which concludes the proof. \qed

\begin{remark}\label{gammar}
We note that, in Theorem \ref{lr}, the functional dependence of the constant $\gamma$ on $r$ degenerates when $\lambda_r$ tends to $0$. However, we observe that in the supercritical regime 
\begin{equation*}
\lambda_r = N(p-2) +rps > 0 \qquad \forall \, r \geq 1\,.
\end{equation*}
Therefore, we have the following situation:
\begin{itemize}
\item{if $1<p < \frac{2N}{N+s}$, the constant $\gamma$ depends on $r$ and becomes unbounded when $\lambda_r$ vanishes;}
\vskip0.1cm \noindent 
\item{if $\frac{2N}{N+s}<p<2$ the constant $\gamma$ does not degenerate with any choice of $r$.}
\end{itemize} The functional independence of $\gamma$ on the integrability exponent $r$ in the supercritical range indeed reflects the fact that the solutions are automatically bounded, and therefore $r$-integrable. Since this property must be assumed in the subcritical regime, the estimate of Theorem \ref{lr} trivializes as soon as $\lambda_r$ vanishes. The critical regime $p= 2N/(N+s)$ is the threshold and needs a different inspection, in the spirit of \cite{CHSS}.
\noindent 
\end{remark}

\begin{remark}\label{RMK-non-negative}
The whole double iterative procedure can be performed, as usual, for any signed sub-solution, thereby showing the $L^r$-$L^{\infty}$ estimate for $u_+$. In case $u$ is a solution, then $-u$ is a sub-solution to a similar equation and a similar bound is available for the negative part of $u$. Summarizing, if $u$ is a locally bounded local weak solution to $\eqref{eq}$ and $\lambda_r>0$ (with $1<r\leq 2$ if $p>p_c$), then the $L^r$-$L^{\infty}$ estimate is valid for $|u|$,
\begin{align*}
 \sup_{B_{\rho/2}(x_0)\times(t/2,t)}\,|u| &\le \gamma\Big[\fiint_{B_\rho(x_0)\times(0,t)}|u|^r(x,\tau)\,dx\,d\tau\Big]^\frac{ps}{\lambda_r}\Big(\frac{t}{\rho^{ps}}\Big)^{-\frac{N}{\lambda_r}} \\
&+ \gamma\Big(\frac{t}{\rho^{ps}}\Big)^\frac{1}{2-p}\max\Big\{1,\,\Big(\frac{t}{\rho^{ps}}\Big)^\frac{1-p}{2-p}{\rm Tail}\Big(u,x_0,\frac{\rho}{2},0,t\Big)^{p-1}\Big\}.
\end{align*}
\end{remark}

\section{$L^1$-$L^1$ and $L^1$-$L^{\infty}$ estimates}\label{sec4}

\noindent
In this section we focus on estimates involving the $L^1$-norm of the solution, proving Theorems \ref{l1} and \ref{linf}. First we assume $u$ to be a (possibly unbounded) solution of \eqref{eq}, s.t.\ $u\ge 0$ in $B_{4\rho}(x_0)\times(0,t)\subset\Omega_T$. Set
\beq\label{pert}
P_\pm = \max\Big\{1,\Big(\frac{t}{\rho^{ps}}\Big)^\frac{1-p}{2-p}{\rm Tail}\Big(u_\pm,x_0,\frac{\rho}{2},0,t\Big)^{p-1}\Big\}.
\eeq
We need a technical lemma, yielding a special energy estimate for $u$. We shall use it at one step only (see \eqref{vol3} below), but it is a crucial one:

\begin{lemma}\label{tst}
Let $0<\sigma<\sigma'<1$, $\gamma'>0$, $\xi\in C^1_c(B_{\sigma'\rho}(x_0))$ s.t.\
\[\xi = 1 \ \text{in $B_{\sigma\rho}(x_0)$,} \ 0 \le \xi \le 1, \ |\nabla\xi| \le \frac{\gamma'}{(\sigma'-\sigma)\rho} \ \text{in $\R^N$.}\]
Then, there exists $\gamma>0$ depending on the data and $\gamma'$, s.t.\
\begin{align*}
& \int_0^t \tau^\frac{1}{p}\iint_{A_\tau}\frac{(u(x,\tau)-u(y,\tau))^p}{|x-y|^{N+ps}}\Big[u(x,\tau)+\Big(\frac{t}{\rho^{ps}}\Big)^\frac{1}{2-p}\Big]^{-\frac{2}{p}}\xi^p(x)\,dx\,dy\,d\tau \\
&\le \gamma\rho^s\max\Big\{\frac{1}{(\sigma'-\sigma)^p},\,\frac{1}{(1-\sigma')^{N+ps}}\Big\}\Big[\sup_{0<\tau<t}\,\int_{B_\rho(x_0)}u(x,\tau)\,dx+\Big(\frac{t}{\rho^{\lambda_1}}\Big)^\frac{1}{2-p}\Big]^\frac{2(p-1)}{p}\Big(\frac{t}{\rho^{\lambda_1}}\Big)^\frac{1}{p} P_-,
\end{align*}
where for all $\tau\in(0,t)$
\[A_\tau = \big\{(x,y)\in B_{\sigma'\rho}(x_0)\times B_{\sigma'\rho}(x_0):\,u(x,\tau)>u(x,\tau),\,\xi(x)>\xi(y)\big\}.\]
\end{lemma}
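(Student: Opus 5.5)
The plan is to prove the estimate by a weighted energy inequality, obtained by testing the equation with a negative power of the shifted solution, combined with a Hölder splitting, in the spirit of the local argument of \cite{DBGV-mono}. Set $\delta=(t/\rho^{ps})^{1/(2-p)}$ and $v=u+\delta$, which is $\ge\delta>0$ in $B_{4\rho}(x_0)\times(0,t)$. On $A_\tau$ we have $u(x,\tau)>u(y,\tau)$, so $v(x,\tau)=\max\{v(x,\tau),v(y,\tau)\}$ and $0<u(x,\tau)-u(y,\tau)\le v(x,\tau)$. We also have $\xi(x)>\xi(y)$, which is what should let us throw away the unfavourable cutoff contributions.

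The first step is the weighted energy estimate. We test \ref{lws1} of Definition \ref{lws} with $\varphi=\tau\,v^{-1/p}\xi^p$. This is legitimate because $v\ge\delta$, after the time mollification of Appendix \ref{app}, since $u_t$ is not available.
\begin{itemize}[leftmargin=1cm]
\item The parabolic term gives $\tfrac{p}{p-1}\tau v^{(p-1)/p}$ evaluated at the endpoints, minus a $\tau$-derivative term. Both are bounded by $\gamma\int_0^t\int_{B_\rho} v^{(p-1)/p}\,dx\,d\tau$.
\item By Hölder in $x$ on $B_\rho$, this is at most $\gamma\,t\,\rho^{N/p}\big[\sup_\tau\int_{B_\rho}u+\delta\rho^N\big]^{(p-1)/p}$.
\item The local part of the diffusion term, by the elementary inequalities of Lemma \ref{alg} (applied to $a^{-1/p}$, i.e.\ $q=1-1/p$ in a formal sense, redone by hand), produces the good term $\int\tau\iint|u(x)-u(y)|^p\max\{v(x),v(y)\}^{-1-1/p}\xi^p$.
\item The local part also produces a cutoff error controlled by $\rho^{-ps}(\sigma'-\sigma)^{-p}\int\tau\int v^{p-1-1/p}$, handled via $v^{p-1-1/p}\le\delta^{p-2}v^{(p-1)/p}$.
\item The nonlocal part (with $y\notin B_{\sigma'\rho}$) is bounded using $u\ge -u_-$ outside $B_{4\rho}$, which gives $(1-\sigma')^{-N-ps}\rho^{-ps}\int\tau\int v^{-1/p}\big[v^{p-1}+{\rm Tail}(u_-)^{p-1}\big]$.
\item Using $v^{-1/p}\le\delta^{-1}v^{(p-1)/p}$, this is absorbed into $P_-$ through the choice of $\delta$.
\end{itemize}

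The second step is the Hölder splitting. On $A_\tau$ we write the target integrand as
\[
\tau^{\frac1p}\,|\Delta u|^p v(x)^{-\frac2p}\xi^p(x)=\Big[\tau|\Delta u|^p v(x)^{-1-\frac1p}\xi^p(x)\Big]^{\frac1p}\Big[|\Delta u|^p v(x)^{-\frac1p}\xi^p(x)\Big]^{\frac{p-1}{p}},
\]
where $\Delta u=u(x,\tau)-u(y,\tau)$; the exponents check out. We then apply Hölder with exponents $p$ and $p/(p-1)$ with respect to $dx\,dy\,d\tau/|x-y|^{N+ps}$ over $\{(x,y,\tau):(x,y)\in A_\tau\}$. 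The first factor is exactly the good term from the first step. The second factor is the same kind of quantity with weight exponent $1/p$ in place of $1+1/p$. It should be estimated by testing once more, now with $\varphi=v^{1-1/p}\xi^p$ (without the time weight), and inserting the time weight afterwards by monotonicity of $\tau\mapsto\tau$. That test gives
\[
\iint|\Delta u|^p v^{-1/p}\xi^p\lesssim\sup_\tau\int_{B_\rho}v^{2-1/p}+\text{(cutoff/tail terms)}.
\]
Multiplying the two factors, the powers of $t$, $\rho$ and $\sup\int u+\delta\rho^N$ should recombine to $\rho^s\big[\sup\int u+(t/\rho^{\lambda_1})^{1/(2-p)}\big]^{2(p-1)/p}(t/\rho^{\lambda_1})^{1/p}P_-$, with $\delta\rho^N=(t/\rho^{\lambda_1})^{1/(2-p)}$. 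I will check this scaling bookkeeping at the end.

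The main obstacle is the second factor. The weight $v^{-1/p}$ is too weak to reduce directly to $L^1$ quantities, since $\sup\int v^{2-1/p}$ is not an $L^1$ norm. If that route fails, I would instead use $0<\Delta u\le v(x)$ on $A_\tau$ to trade powers, via $|\Delta u|^p v^{-1/p}\le|\Delta u|^{p-\epsilon}v^{\epsilon-1/p}$, and rebalance the Hölder exponents so that both factors are weighted energies with exponents in $(0,1]$ (the range where the negative-power test works with constants depending only on the data). Only the parabolic terms $\int v^{\theta}$ with $\theta\le1$ should then appear, and these are controlled by $\sup\int_{B_\rho}u$ plus $\delta\rho^N$. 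The restriction $\xi(x)>\xi(y)$ in $A_\tau$ is used so that the cutoff difference $\xi^p(x)-\xi^p(y)$ has a fixed sign. That sign makes the mixed cutoff term one-sided, so it can be bounded by $|\nabla\xi|^p|x-y|^p$ alone.
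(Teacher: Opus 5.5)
\textbf{The gap is the second H\"older factor.} Your first step is workable. Testing with $-\tau v^{-1/p}\xi^p$ controls $\int_0^t\tau\iint_{A_\tau}|\Delta u|^p v(x)^{-1-1/p}\xi^p(x)\,|x-y|^{-N-ps}$, and its parabolic part only involves $v^{(p-1)/p}$, an exponent below $1$. One caveat: for the nonlocal cutoff term with your exponent $1/p>(2-p)/p$, a single weighted Young inequality does not give a homogeneous bound. You need a case distinction, $v(y)\ge v(x)/2$ versus $v(y)<v(x)/2$, but it closes.

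The problem is the factor $\int_0^t\iint_{A_\tau}|\Delta u|^p v(x)^{-1/p}\xi^p(x)\,|x-y|^{-N-ps}$. The weight $v^{-1/p}$ only arises as the good term of the increasing test $v^{1-1/p}\xi^p$. Its parabolic part is $\sup_\tau\int_{B_\rho}v^{2-1/p}$ with $2-1/p>1$. Nothing in the hypotheses controls this by $S:=\sup_\tau\int_{B_\rho}u+\delta\rho^N$; note that local boundedness is not assumed.

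The scaling check you postponed confirms the route cannot close. Up to $P_-$ and the $\sigma$-factors, your first factor is of order $t\rho^{N/p}S^{(p-1)/p}$. The target is $\rho^sS^{2(p-1)/p}(t/\rho^{\lambda_1})^{1/p}=t^{1/p}\rho^{N(2-p)/p}S^{2(p-1)/p}$. So the second factor would have to be bounded by $|B_\rho|\,(S/|B_\rho|)^{2-1/p}$. That is exactly the Jensen \emph{lower} bound for $\int_{B_\rho}v^{2-1/p}$, so you would need a reverse H\"older inequality, which fails for concentrated data.

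Your fallback goes the wrong way. Using $\Delta u\le v(x)$ to pass to $|\Delta u|^{p-\epsilon}v^{\epsilon-1/p}$ makes the $v$-exponent less negative. Lowering the power of $|\Delta u|$ below $p$ also leaves a quantity that is no longer the good term of any test function.

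\textbf{The missing idea: no splitting is needed.} Since $1<p<2$, the target weight is $v(x)^{-2/p}=v(x)^{-1-\theta}$ with $\theta=(2-p)/p\in(0,1)$. It is therefore itself the good term of a single test with a \emph{decreasing} power. The paper tests (after the time mollification of Lemma \ref{mol}) with $\varphi=-\tau^{1/p}(u+\nu)^{(p-2)/p}\xi^p$, where $\nu=(t/\rho^{ps})^{1/(2-p)}$. Then:
\begin{enumerate}
\item Convexity of $s\mapsto s^{(p-2)/p}$ produces exactly $(\Delta u)^p(u(x)+\nu)^{-2/p}\xi^p(x)$ on the set where $u(x)>u(y)$ and $\xi(x)>\xi(y)$.
\item The parabolic terms involve $(u+\nu)^{2(p-1)/p}$ with $2(p-1)/p<1$, so H\"older reduces them to $L^1$ quantities.
\item The weight $\tau^{1/p}$, through $\int_0^t\tau^{(1-p)/p}\,d\tau=p\,t^{1/p}$, yields precisely $\rho^s(t/\rho^{\lambda_1})^{1/p}S^{2(p-1)/p}$.
\item Inequality \eqref{liao}, applied with $\varepsilon=\kappa\frac{\Delta u}{u(y)+\nu}\big[\frac{u(y)+\nu}{u(x)+\nu}\big]^{2/p}$ and $\kappa$ small, leaves $(u(y)+\nu)^{p-2}(u(x)+\nu)^{2(p-1)/p}\le\nu^{p-2}(u(x)+\nu)^{2(p-1)/p}$. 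This has the same exponent as the parabolic term, so all cutoff and tail errors are homogeneous.
\end{enumerate}
The H\"older splitting you had in mind is what the paper does \emph{afterwards}, in \eqref{vol2}, to use this lemma. There the complementary factor is $\tau^{(1-p)/p}(u+\nu)^{2(p-1)/p}|x-y|^{p-N-ps}\xi^p$, which carries no difference quotient of $u$ and is directly $L^1$-controllable.
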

\begin{proof}
Set for brevity
\[B = B_\rho(x_0), \ \check B = B_{\sigma\rho}(x_0), \ \hat B = B_{\sigma'\rho}(x_0),\]
so that $\check B\subset\hat B\subset B$, and
\[\nu = \Big(\frac{t}{\rho^{ps}}\Big)^\frac{1}{2-p} > 0.\]
Define for all $(x,\tau)\in\R^N\times(0,T)$
\[\varphi(x,\tau) = -\tau^\frac{1}{p}(u(x,\tau)+\nu)^\frac{p-2}{p}\xi^p(x).\]
The function $\varphi$ is well defined since $\xi=0$ in $\hat B^c$, yet it is not an admissible test function in \eqref{eq} as it is not differentiable in $\tau$, in general. Nevertheless, using a convenient mollification procedure (the proof is postponed to Lemma \ref{mol}), we get the following inequality, with $\gamma>0$ depending on the data:
\begin{align}\label{tst1}
0 &\ge -\frac{pt^\frac{1}{p}}{2(p-1)}\int_B (u(x,t)+\nu)^\frac{2(p-1)}{p}\xi^p(x)\,dx+\frac{1}{2(p-1)}\int_0^t \tau^\frac{1-p}{p}\int_B(u(x,\tau)+\nu)^\frac{2(p-1)}{p}\xi^p(x)\,dx\,d\tau \\
\nonumber &+ \gamma\int_0^t\iint_{\R^N\times\R^N}\frac{|u(x,\tau)-u(y,\tau)|^{p-2}(u(x,\tau)-u(y,\tau))}{|x-y|^{N+ps}}\,(\varphi(x,\tau)-\varphi(y,\tau))\,dx\,dy\,d\tau \\
\nonumber &= I_1+I_2.
\end{align}
We first deal with the evolutive term $I_1$. Since the second integral of $I_1$ is positive, we dismiss it. Besides, on the first term we apply H\"older's inequality and $\xi\le 1$:
\begin{align*}
I_1 &\ge -\frac{pt^\frac{1}{p}}{2(p-1)}\int_B(u(x,t)+\nu)^\frac{2(p-1)}{p}\xi^p(x)\,dx \\
&\ge -\gamma t^\frac{1}{p}\Big[\int_B(u(x,t)+\nu)\,dx\Big]^\frac{2(p-1)}{p}|B|^\frac{2-p}{p} \\
&\ge -\gamma\rho^\frac{N(2-p)}{p} t^\frac{1}{p}\Big[\sup_{0<\tau<t}\,\int_B u(x,\tau)\,dx+\nu\rho^N\Big]^\frac{2(p-1)}{p}.
\end{align*}
Recalling the definitions of $\lambda_1$ and $\nu$, we find
\beq\label{tst2}
I_1 \ge -\gamma\rho^s\Big[\sup_{0<\tau<t}\,\int_B u(x,\tau)\,dx+\Big(\frac{t}{\rho^{\lambda_1}}\Big)^\frac{1}{2-p}\Big]^\frac{2(p-1)}{p}\Big(\frac{t}{\rho^{\lambda_1}}\Big)^\frac{1}{p}.
\eeq
Now we turn to the diffusive term $I_2$. For simplicity, we fix $\tau\in(0,t)$ and focus on the space integral only (via a sign change):
\[I_3 = \iint_{\R^N\times\R^N}\frac{|u(x,\tau)-u(y,\tau)|^{p-2}(u(x,\tau)-u(y,\tau))}{|x-y|^{N+ps}}\Big[(u(x,\tau)+\nu)^\frac{p-2}{p}\xi^p(x)-(u(y,\tau)+\nu)^\frac{p-2}{p}\xi^p(y)\Big]\,dx\,dy.\]
By symmetry, we may rephrase such integral by setting
\begin{align*}
I_3 &= 2\iint_{A^+_\tau}\frac{(u(x,\tau)-u(y,\tau))^{p-1}}{|x-y|^{N+ps}}\Big[(u(x,\tau)+\nu)^\frac{p-2}{p}\xi^p(x)-(u(y,\tau)+\nu)^\frac{p-2}{p}\xi^p(y)\Big]\,dx\,dy \\
&+ 2\iint_{A^-_\tau}\frac{(u(x,\tau)-u(y,\tau))^{p-1}}{|x-y|^{N+ps}}\Big[(u(x,\tau)+\nu)^\frac{p-2}{p}\xi^p(x)-(u(y,\tau)+\nu)^\frac{p-2}{p}\xi^p(y)\Big]\,dx\,dy \\
&= I_4+I_5,
\end{align*}
where we have set
\[A^+_\tau = \big\{(x,y)\in\R^N\times\R^N:\,u(x,\tau)>u(y,\tau),\, u(y,\tau)\ge 0\big\},\]
\[A^-_\tau = \big\{(x,y)\in\R^N\times\R^N:\,u(x,\tau)>u(y,\tau),\, u(y,\tau)< 0\big\}.\]
We first consider $I_4$. Note that for all $(x,y)\in A^+_\tau$ s.t.\ $\xi(x)\le\xi(y)$ the integrand is negative, so we may restrict ourselves to the subdomain where $\xi(x)>\xi(y)$ and split the integrand as follows:
\begin{align*}
I_4 &\le \gamma\iint_{A^+_\tau\cap\{\xi(x)>\xi(y)\}}\frac{(u(x,\tau)-u(y,\tau))^{p-1}}{|x-y|^{N+ps}}\Big[(u(x,\tau)+\nu)^\frac{p-2}{p}-(u(y,\tau)+\nu)^\frac{p-2}{p}\Big]\xi^p(x)\,dx\,dy \\
&+ \gamma\iint_{A^+_\tau\cap\{\xi(x)>\xi(y)\}}\frac{(u(x,\tau)-u(y,\tau))^{p-1}}{|x-y|^{N+ps}}(u(y,\tau)+\nu)^\frac{p-2}{p}(\xi^p(x)-\xi^p(y))\,dx\,dy \\
&= I_6+I_7.
\end{align*}
In order to deal with $I_6$, $I_7$, respectively, we need two pointwise inequalities holding for all $(x,y)\in A^+_\tau$ s.t.\ $\xi(x)>\xi(y)$. First, by concavity we have
\[(u(x,\tau)+\nu)^\frac{p-2}{p}-(u(y,\tau)+\nu)^\frac{p-2}{p} \le \frac{p-2}{p}(u(x,\tau)+\nu)^{-\frac{2}{p}}(u(x,\tau)-u(y,\tau)).\]
So, $I_6$ is estimated by a negative quantity as follows:
\beq\label{tst3}
I_6 \le -\gamma\iint_{A^+_\tau\cap\{\xi(x)>\xi(y)\}}\frac{(u(x,\tau)-u(y,\tau))^p}{|x-y|^{N+ps}}(u(x,\tau)+\nu)^{-\frac{2}{p}}\xi^p(x)\,dx\,dy.
\eeq
Besides, for all  $(x,y)\in A^+_\tau\cap\{\xi(x)>\xi(y)\}$ we apply inequality \eqref{liao} with $a=\xi(x)>\xi(y)=b$, $\delta\in(0,1)$ to be determined later, and $\eps\in(0,1)$ defined by
\begin{align*}
\eps &= \delta\frac{u(x,\tau)-u(y,\tau)}{u(y,\tau)+\nu}\Big[\frac{u(y,\tau)+\nu}{u(x,\tau)+\nu}\Big]^\frac{2}{p} \\
&\le \Big[\frac{u(y,\tau)+\nu}{u(x,\tau)+\nu}\Big]^\frac{2-p}{p} < 1,
\end{align*}
so that the following pointwise inequality holds with $\gamma>0$ independent of $\delta$:
\begin{align*}
\xi^p(x)-\xi^p(y) &\le \delta\frac{u(x,\tau)-u(y,\tau)}{u(y,\tau)+\nu}\Big[\frac{u(y,\tau)+\nu}{u(x,\tau)+\nu}\Big]^\frac{2}{p}\xi^p(x) \\
&+ \frac{\gamma}{\delta^{p-1}}\Big[\frac{u(y,\tau)+\nu}{u(x,\tau)-u(y,\tau)}\Big]^{p-1}\Big[\frac{u(x,\tau)+\nu}{u(y,\tau)+\nu}\Big]^\frac{2(p-1)}{p}(\xi(x)-\xi(y))^p.
\end{align*}
Plugging such inequality into the integrand of $I_7$, and recalling that $u(y,\tau)+\nu\ge\nu$, we have
\begin{align}\label{tst4}
I_7 &\le \gamma\delta\iint_{A^+_\tau\cap\{\xi(x)>\xi(y)\}}\frac{(u(x,\tau)-u(y,\tau))^p}{|x-y|^{N+ps}}(u(x,\tau)+\nu)^{-\frac{2}{p}}\xi^p(x)\,dx\,dy \\
\nonumber &+ \frac{\gamma}{\delta^{p-1}\nu^{2-p}}\iint_{A^+_\tau\cap\{\xi(x)>\xi(y)\}}(u(x,\tau)+\nu)^\frac{2(p-1)}{p}\frac{(\xi(x)-\xi(y))^p}{|x-y|^{N+ps}}\,dx\,dy.
\end{align}
Choosing $\delta\in(0,1)$ small enough, we can reabsorb the first term of \eqref{tst4} into \eqref{tst3}, so that for some $\gamma>0$ depending on the data
\begin{align}\label{tst5}
I_4 &\le -\gamma\iint_{A^+_\tau\cap\{\xi(x)>\xi(y)\}}\frac{(u(x,\tau)-u(y,\tau))^p}{|x-y|^{N+ps}}(u(x,\tau)+\nu)^{-\frac{2}{p}}\xi^p(x)\,dx\,dy \\
\nonumber &+ \frac{\gamma}{\nu^{2-p}}\iint_{A^+_\tau\cap\{\xi(x)>\xi(y)\}}(u(x,\tau)+\nu)^\frac{2(p-1)}{p}\frac{(\xi(x)-\xi(y))^p}{|x-y|^{N+ps}}\,dx\,dy \\
\nonumber &= I_8+I_9.
\end{align}
In the following lines, we leave $I_8$ alone and estimate $I_9$. First note that, for all $(x,y)\in A^+_\tau\cap\{\xi(x)>\xi(y)\}$, we have in particular $\xi(x)>0$ and hence $x\in\hat B$. Therefore, we may split the space integral as follows:
\begin{align*}
I_9 &\le \frac{\gamma}{\nu^{2-p}}\iint_{\hat B\times B}(u(x,\tau)+\nu)^\frac{2(p-1)}{p}\frac{(\xi(x)-\xi(y))^p}{|x-y|^{N+ps}}\,dx\,dy \\
&+ \frac{\gamma}{\nu^{2-p}}\iint_{\hat B\times B^c}(u(x,\tau)+\nu)^\frac{2(p-1)}{p}\frac{(\xi(x)-\xi(y))^p}{|x-y|^{N+ps}}\,dx\,dy \\
&= I_{10}+I_{11}.
\end{align*}
On $I_{10}$ we act using the gradient bound on $\xi$ and H\"older's inequality:
\begin{align}\label{tst6}
I_{10} &\le \frac{\gamma}{(\sigma'-\sigma)^p\rho^p\nu^{2-p}}\iint_{\hat B\times B}\frac{(u(x,\tau)+\nu)^\frac{2(p-1)}{p}}{|x-y|^{N+ps-p}}\,dx\,dy \\
\nonumber &\le \frac{\gamma}{(\sigma'-\sigma)^p\rho^p\nu^{2-p}}\Big[\int_{\hat B}(u(x,\tau)+\nu)^\frac{2(p-1)}{p}\,dx\Big]\Big[\sup_{x\in\hat B}\int_B\frac{dy}{|x-y|^{N+ps-p}}\,dy\Big] \\
\nonumber &\le \frac{\gamma}{(\sigma'-\sigma)^p\rho^p\nu^{2-p}}\Big[\int_{B}(u(x,\tau)+\nu)\,dx\Big]^\frac{2(p-1)}{p}|B|^\frac{2-p}{p}\Big[\int_{B_{2\rho}(0)}\frac{dz}{|z|^{N+ps-p}}\Big] \\
\nonumber &\le \frac{\gamma\rho^{s-ps-\frac{\lambda_1}{p}}}{(\sigma'-\sigma)^p\nu^{2-p}}\Big[\int_B u(x,\tau)\,dx+\nu\rho^N\Big]^\frac{2(p-1)}{p}.
\end{align}
To estimate $I_{11}$, we note that for all $x\in\hat B$, $y\in B^c$
\[|x-y| \ge |y-x_0|-|x-x_0| \ge (1-\sigma')|y-x_0|,\]
so by $0\le\xi\le 1$ and H\"older's inequality again we get
\begin{align}\label{tst7}
I_{11} &\le \frac{\gamma}{(1-\sigma')^{N+ps}\nu^{2-p}}\iint_{\hat B\times B^c}\frac{(u(x,\tau)+\nu)^\frac{2(p-1)}{p}}{|y-x_0|^{N+ps}}\,dx\,dy \\
\nonumber &\le \frac{\gamma}{(1-\sigma')^{N+ps}\nu^{2-p}}\Big[\int_B(u(x,\tau)+\nu)^\frac{2(p-1)}{p}\,dx\Big]\Big[\int_{B^c}\frac{dy}{|y-x_0|^{N+ps}}\Big] \\
\nonumber &\le \frac{\gamma}{(1-\sigma')^{N+ps}\rho^{ps}\nu^{2-p}}\Big[\int_B(u(x,\tau)+\nu)\,dx\Big]^\frac{2(p-1)}{p}|B|^\frac{2-p}{p} \\
\nonumber &\le \frac{\gamma\rho^{s-ps-\frac{\lambda_1}{p}}}{(1-\sigma')^{N+ps}\nu^{2-p}}\Big[\int_B u(x,\tau)\,dx+\nu\rho^N\Big]^\frac{2(p-1)}{p}.
\end{align}
By \eqref{tst6} and \eqref{tst7} we have
\[I_9 \le \frac{\gamma\rho^{s-ps-\frac{\lambda_1}{p}}}{\nu^{2-p}}\max\Big\{\frac{1}{(\sigma'-\sigma)^p},\,\frac{1}{(1-\sigma')^{N+ps}}\Big\}\Big[\int_B u(x,\tau)\,dx+\nu\rho^N\Big]^\frac{2(p-1)}{p}.\]
Going back to \eqref{tst5} and recalling the definition of $\nu$ we have
\begin{align}\label{tst8}
I_4 &\le -\gamma\iint_{A^+_\tau\cap\{\xi(x)>\xi(y)\}}\frac{(u(x,\tau)-u(y,\tau))^p}{|x-y|^{N+ps}}\Big[u(x,\tau)+\Big(\frac{t}{\rho^{ps}}\Big)^\frac{1}{2-p}\Big]^{-\frac{2}{p}}\xi^p(x)\,dx\,dy \\
\nonumber &+ \frac{\gamma\rho^{s-\frac{\lambda_1}{p}}}{t}\max\Big\{\frac{1}{(\sigma'-\sigma)^p},\,\frac{1}{(1-\sigma')^{N+ps}}\Big\}\Big[\int_B u(x,\tau)\,dx+\Big(\frac{t}{\rho^{\lambda_1}}\Big)^\frac{1}{2-p}\Big]^\frac{2(p-1)}{p}.
\end{align}
There remains to estimate $I_5$. First we observe that, for all $(x,y)\in A^-_\tau$ we have $u(y,\tau)< 0$ and hence by assumption $y\in B^c$. This in turn implies $\xi(y)=0$, so we may reduce the integrand. Also, by subadditivity we have
\[(u(x,\tau)-u(y,\tau))^{p-1} \le u^{p-1}(x,\tau)+u_-^{p-1}(y,\tau).\]
Therefore, we can estimate $I_5$ as follows, further reducing the integration domain by $\xi=0$ in $\hat B^c$:
\begin{align}\label{tst9}
I_5 &\le \gamma\iint_{A^-_\tau}\frac{u^{p-1}(x,\tau)+u_-^{p-1}(y,\tau)}{|x-y|^{N+ps}}(u(x,\tau)+\nu)^\frac{p-2}{p}\xi^p(x)\,dx\,dy \\
\nonumber &\le \frac{\gamma}{\nu^{2-p}}\iint_{\hat B\times B^c}\frac{(u(x,\tau)+\nu)^\frac{2(p-1)}{p}}{|x-y|^{N+ps}}\,dx\,dy+\frac{\gamma}{\nu}\iint_{\hat B\times B^c}\frac{(u(x,\tau)+\nu)^\frac{2(p-1)}{p}}{|x-y|^{N+ps}}u_-^{p-1}(y,\tau)\,dx\,dy \\
\nonumber &= I_{12}+I_{13}.
\end{align}
Now $I_{12}$ is estimated similarly to \eqref{tst7}:
\beq\label{tst10}
I_{12} \le \frac{\gamma\rho^{s-ps-\frac{\lambda_1}{p}}}{(1-\sigma')^{N+ps}\nu^{2-p}}\Big[\int_B u(x,\tau)\,dx+\nu\rho^N\Big]^\frac{2(p-1)}{p}.
\eeq
The estimate of $I_{13}$ is more delicate, as it involves a tail-type integral of $u_-$ (see \eqref{tail}). As usual, for all $(x,y)\in\hat B\times B^c$ we have
\[|x-y| \ge (1-\sigma')|y-x_0|,\]
hence we can separate the integrals as follows:
\begin{align}\label{tst11}
I_{13} &\le \frac{\gamma}{(1-\sigma')^{N+ps}\nu}\Big[\int_{\hat B}(u(x,\tau)+\nu)^\frac{2(p-1)}{p}\,dx\Big]\Big[\int_{B^c}\frac{u_-^{p-1}(y,\tau)}{|y-x_0|^{N+ps}}\,dy\Big] \\
\nonumber &\le \frac{\gamma}{(1-\sigma')^{N+ps}\rho^{ps}\nu}\Big[\int_B(u(x,\tau)+\nu)\,dx\Big]^\frac{2(p-1)}{p}|B|^\frac{2-p}{p}\Big[\Big(\frac{\rho}{2}\Big)^{ps}\int_{B^c}\frac{u_-^{p-1}(y,\tau)}{|y-x_0|^{N+ps}}\,dy\Big] \\
\nonumber &\le \frac{\gamma\rho^{s-ps-\frac{\lambda_1}{p}}}{(1-\sigma')^{N+ps}\nu}\Big[\int_B u(x,\tau)\,dx+\nu\rho^N\Big]^\frac{2(p-1)}{p}{\rm Tail}\Big(u_-,x_0,\frac{\rho}{2},0,t\Big)^{p-1}.
\end{align}
Plugging \eqref{tst10} and \eqref{tst11} back into \eqref{tst9}, and recalling the definition of $\nu$, we get
\beq\label{tst12}
I_5 \le \frac{\gamma\rho^{s-\frac{\lambda_1}{p}}}{(1-\sigma')^{N+ps}t}\Big[\int_B u(x,\tau)\,dx+\Big(\frac{t}{\rho^{\lambda_1}}\Big)^\frac{1}{2-p}\Big]^\frac{2(p-1)}{p}P_-,
\eeq
where $P_-\ge 1$ is defined by \eqref{pert}. Note that \eqref{tst8} and \eqref{tst12} provide us with homogeneous estimates of $I_4$ and $I_5$, respectively (except for the constants and the perturbation $P_-$)), so we have for all $\tau\in(0,t)$
\begin{align*}
I_3 &= I_4+I_5 \\
&\le -\gamma\iint_{A^+_\tau\cap\{\xi(x)>\xi(y)\}}\frac{(u(x,\tau)-u(y,\tau))^p}{|x-y|^{N+ps}}\Big[u(x,\tau)+\Big(\frac{t}{\rho^{ps}}\Big)^\frac{1}{2-p}\Big]^{-\frac{2}{p}}\xi^p(x)\,dx\,dy \\
&+ \frac{\gamma\rho^{s-\frac{\lambda_1}{p}}}{t}\max\Big\{\frac{1}{(\sigma'-\sigma)^p},\,\frac{1}{(1-\sigma')^{N+ps}}\Big\}\Big[\int_B u(x,\tau)\,dx+\Big(\frac{t}{\rho^{\lambda_1}}\Big)^\frac{1}{2-p}\Big]^\frac{2(p-1)}{p}P_-.
\end{align*}
A time integration yields
\begin{align}\label{tst13}
I_2 &\ge \gamma\int_0^t \tau^\frac{1}{p}\iint_{A^+_\tau\cap\{\xi(x)>\xi(y)\}}\frac{(u(x,\tau)-u(y,\tau))^p}{|x-y|^{N+ps}}\Big[u(x,\tau)+\Big(\frac{t}{\rho^{ps}}\Big)^\frac{1}{2-p}\Big]^{-\frac{2}{p}}\xi^p(x)\,dx\,dy\,d\tau \\
\nonumber &- \gamma\rho^s\max\Big\{\frac{1}{(\sigma'-\sigma)^p},\,\frac{1}{(1-\sigma')^{N+ps}}\Big\}\Big[\sup_{0<\tau<t}\,\int_B u(x,\tau)\,dx+\Big(\frac{t}{\rho^{\lambda_1}}\Big)^\frac{1}{2-p}\Big]^\frac{2(p-1)}{p}\Big(\frac{t}{\rho^{\lambda_1}}\Big)^\frac{1}{p}P_-.
\end{align}
Finally, we combine \eqref{tst1}, \eqref{tst2}, and \eqref{tst13} and reduce the space integration domain in the first term, to find
\begin{align*}
0 &\ge I_1+I_2 \\
&\ge \gamma\int_0^t \tau^\frac{1}{p}\iint_{A_\tau}\frac{(u(x,\tau)-u(y,\tau))^p}{|x-y|^{N+ps}}\Big[u(x,\tau)+\Big(\frac{t}{\rho^{ps}}\Big)^\frac{1}{2-p}\Big]^{-\frac{2}{p}}\xi^p(x)\,dx\,dy\,d\tau \\
&- \gamma\rho^s\max\Big\{\frac{1}{(\sigma'-\sigma)^p},\,\frac{1}{(1-\sigma')^{N+ps}}\Big\}\Big[\sup_{0<\tau<t}\,\int_B u(x,\tau)\,dx+\Big(\frac{t}{\rho^{\lambda_1}}\Big)^\frac{1}{2-p}\Big]^\frac{2(p-1)}{p}\Big(\frac{t}{\rho^{\lambda_1}}\Big)^\frac{1}{p}P_-,
\end{align*}
which yields the conclusion, with a different $\gamma>0$ depending on the data and on $\gamma'$.
\end{proof}

\noindent
We can now prove the $L^1-L^1$ estimate:
\vskip4pt
\noindent
{\em Proof of Theorem \ref{l1} (conclusion).} We perform an increasing iteration by setting for all $n\in\N$
\[\rho_n = \rho\sum_{i=0}^n\frac{1}{2^i}, \ \check\rho_n = \frac{3\rho_n+\rho_{n+1}}{4}, \ \hat\rho_n = \frac{\rho_n+3\rho_{n+1}}{4},\]
so that $\rho_n<\check\rho_n<\hat\rho_n<\rho_{n+1}$, with $\rho_0=\rho$ and limit $\rho_\infty=2\rho$ as $n\to\infty$. Also set
\[B_n = B_{\rho_n}(x_0), \ \check B_n = B_{\check\rho_n}(x_0), \ \hat B_n = B_{\hat\rho_n}(x_0),\]
so $B_n\subset\check B_n\subset\hat B_n\subset B_{n+1}$, and $B_\infty=B_{2\rho}(x_0)$. For all $n\in\N$ let $t_1\in[0,t]$ be s.t.\
\[\int_{B_n}u(x,t_1)\,dx = \sup_{0<\tau<t}\,\int_{B_n}u(x,\tau)\,dx = S_n,\]
noting as well that $(S_n)$ is a bounded sequence in $[0,+\infty)$ as $u\in C(0,T,L^1(B_{2\rho}(x_0))$ (see Definition \ref{lws}). Also, we can find $t_2\in[0,t]$, independent of $n$, s.t.\
\[\int_{B_\infty}u(x,t_2)\,dx = \inf_{0<\tau<t}\,\int_{B_\infty}u(x,\tau)\,dx = J_1.\]
Let us fix $n\in\N$. Without loss of generality, we shall assume henceforth that $0\le t_1\le t_2\le t$.  In addition, we pick $\xi_n\in C^\infty_c(\check B_n)$ s.t.\
\[\xi_n = 1 \ \text{in $B_n$,} \ 0 \le \xi_n \le 1, \ |\nabla\xi_n| \le \frac{\gamma 2^n}{\rho} \ \text{in $\R^N$.}\]
We use $\xi_n^{p+1}\in C^1_c(\check B_n)$ as a (stationary) test function for \eqref{eq} in the cylinder $\check B_n\times(t_1,t_2)$, to get
\begin{align*}
0 &= \int_{\check B_n}u(x,\tau)\xi_n^{p+1}(x)\,dx\Big|_{t_1}^{t_2} \\
&+ \int_{t_1}^{t_2}\iint_{\R^N\times\R^N}|u(x,\tau)-u(y,\tau)|^{p-2}(u(x,\tau)-u(y,\tau))(\xi_n^{p+1}(x)-\xi_n^{p+1}(y))K(x,y,t)\,dx\,dy\,d\tau.
\end{align*}
This in turn, by the properties of $\xi_n$, implies
\begin{align}\label{vol1}
& S_n \le \int_{\check B_n}u(x,t_1)\xi_n^{p+1}(x)\,dx \\
\nonumber &\le \int_{B_\infty}u(x,t_2)\,dx+\gamma\int_{t_1}^{t_2}\iint_{\R^N\times\R^N}\frac{|u(x,\tau)-u(y,\tau)|^{p-2}(u(x,\tau)-u(y,\tau))}{|x-y|^{N+ps}}(\xi_n^{p+1}(x)-\xi_n^{p+1}(y))\,dx\,dy\,d\tau \\
\nonumber &= J_1+J_2.
\end{align}
We focus on the diffusive term $J_2$. First, using symmetry and the properties of $\xi_n$, we split such term into three integrals:
\begin{align*}
J_2 &\le \gamma\int_{t_1}^{t_2}\iint_{\hat B_n\times\hat B_n}\frac{|u(x,\tau)-u(y,\tau)|^{p-2}(u(x,\tau)-u(y,\tau))}{|x-y|^{N+ps}}(\xi_n^{p+1}(x)-\xi_n^{p+1}(y))\,dx\,dy\,d\tau \\
&+ 2\gamma\int_{t_1}^{t_2}\iint_{\hat B_n\times\hat B_n^c}\frac{|u(x,\tau)-u(y,\tau)|^{p-2}(u(x,\tau)-u(y,\tau))}{|x-y|^{N+ps}}\xi_n^{p+1}(x)\,dx\,dy\,d\tau \\
&\le \gamma\int_{t_1}^{t_2}\iint_{\hat B_n\times\hat B_n^c}\frac{|u(x,\tau)-u(y,\tau)|^{p-2}(u(x,\tau)-u(y,\tau))}{|x-y|^{N+ps}}(\xi_n^{p+1}(x)-\xi_n^{p+1}(y))\,dx\,dy\,d\tau \\
&+ \gamma\int_{t_1}^{t_2}\iint_{\hat B_n\times\hat B_n^c}\frac{u^{p-1}(x,\tau)}{|x-y|^{N+ps}}\xi_n^{p+1}(x)\,dx\,dy\,d\tau+\gamma\int_{t_1}^{t_2}\iint_{\hat B_n\times\hat B_n^c}\frac{u_-^{p-1}(y,\tau)}{|x-y|^{N+ps}}\xi_n^{p+1}(x)\,dx\,dy\,d\tau \\
&= J_3+J_4+J_5.
\end{align*}
We are going to separately estimate $J_3$, $J_4$, and $J_5$. First let us deal with $J_3$. Note that, for all $\tau\in(0,t)$ and $(x,y)\in\hat B_n\times\hat B_n$, we have $u(y,\tau)\ge 0$, in addition the integrand in $J_3$ is positive iff the differences $u(x,\tau)-u(y,\tau)$, $\xi_n(x)-\xi_n(y)$ have the same sign. Therefore, exploiting also symmetry and expanding the time integration interval, we have
\[J_3 \le \gamma\int_0^t\iint_{A_{n,\tau}}\frac{(u(x,\tau)-u(y,\tau))^{p-1}}{|x-y|^{N+ps}}(\xi_n^{p+1}(x)-\xi_n^{p+1}(y))\,dx\,dy\,d\tau,\]
where for all $\tau\in(0,t)$ we have set
\[A_{n,\tau} = \big\{(x,y)\in\hat B_n\times\hat B_n:\,u(x,\tau)>u(y,\tau),\,\xi_n(x)>\xi_n(y)\big\}.\]
Next, note that for all $(x,y)\in A_{n,\tau}$
\[0 \le \xi_n^{p+1}(x)-\xi_n^{p+1}(y) \le \frac{\gamma 2^n\xi_n^p(x)}{\rho}|x-y|,\]
with $\gamma>0$ independent of $n$. We then set
\[\nu_n = \Big(\frac{t}{\rho_{n+1}^{ps}}\Big)^\frac{1}{2-p} > 0,\]
so by the previous estimate on the cut-off function and weighted H\"older's inequality we have
\begin{align}\label{vol2}
J_3 &\le \frac{\gamma 2^n}{\rho}\int_0^t\iint_{A_{n,\tau}}\frac{(u(x,\tau)-u(y,\tau))^{p-1}}{|x-y|^{N+ps-1}}\xi_n^p(x)\,dx\,dy\,d\tau \\
\nonumber &\le \frac{\gamma 2^n}{\rho}\int_0^t\iint_{A_{n,\tau}}\Big[(u(x,\tau)-u(y,\tau))^{p-1}(u(x,\tau)+\nu_n)^\frac{2(1-p)}{p^2}\tau^\frac{p-1}{p^2}\Big] \\
\nonumber &\cdot \Big[(u(x,\tau)+\nu_n)^\frac{2(p-1)}{p^2}\tau^\frac{1-p}{p^2}|x-y|\Big]\frac{\xi_n^p(x)}{|x-y|^{N+ps}}\,dx\,dy\,d\tau \\
\nonumber &\le \frac{\gamma 2^n}{\rho}\Big[\int_0^t \tau^\frac{1}{p}\iint_{A_{n,\tau}}\frac{(u(x,\tau)-u(y,\tau))^p}{|x-y|^{N+ps}}(u(x,\tau)+\nu_n)^{-\frac{2}{p}}\xi_n^p(x)\,dx\,dy\,d\tau\Big]^\frac{p-1}{p} \\
\nonumber &\cdot \Big[\int_0^t \tau^\frac{1-p}{p}\iint_{A_{n,\tau}}\frac{(u(x,\tau)+\nu_n)^\frac{2(p-1)}{p}}{|x-y|^{N+ps-p}}\xi_n^p(x)\,dx\,dy\,d\tau\Big]^\frac{1}{p} = \frac{\gamma 2^n}{\rho}J_6^\frac{p-1}{p}J_7^\frac{1}{p}.
\end{align}
To estimate $J_6$, we apply Lemma \ref{tst} with the following choices:
\[\rho = \rho_{n+1}, \ \sigma = \frac{\rho_n}{\rho_{n+1}}, \ \sigma' = \frac{\check\rho_n}{\rho_{n+1}}, \ \xi = \xi_n.\]
Note that $0<\sigma<\sigma'<1$ with differences estimated respectively by
\[\sigma'-\sigma = \frac{\rho_{n+1}-\rho_n}{4\rho_{n+1}} \ge \frac{1}{2^{n+4}}, \quad 1-\sigma' = \frac{3\rho_{n+1}-3\rho_n}{4\rho_{n+1}} \ge \frac{3}{2^{n+4}}.\]
Also we have $\xi_n\in C^1_c(\check B_n)$, $\xi_n=1$ in $B_n$, and in all of $\R^N$
\[0 \le \xi_n \le 1, \ |\nabla\xi_n| \le \frac{\gamma'}{(\sigma'-\sigma)\rho},\]
for some $\gamma'>0$ depending on the data. In addition, $A_{n,\tau}$ coincides with the domain $A_\tau$ of Lemma \ref{tst} for all $\tau\in(0,t)$. Therefore, recalling that $\rho_{n+1}$ and $\rho$ are comparable via numerical constants, we have
\begin{align}\label{vol3}
J_6 &\le \gamma\rho_{n+1}^s\max\big\{2^{pn},\,2^{(N+ps)n}\big\}\Big[\sup_{0<\tau<t}\,\int_{B_{n+1}}u(x,\tau)\,dx+\Big(\frac{t}{\rho_{n+1}^{\lambda_1}}\Big)^\frac{1}{2-p}\Big]^\frac{2(p-1)}{p}\Big(\frac{t}{\rho_{n+1}^{\lambda_1}}\Big)^\frac{1}{p} \\
\nonumber &\cdot \max\Big\{1,\,\Big(\frac{t}{\rho_{n+1}^{ps}}\Big)^\frac{1-p}{2-p}{\rm Tail}\Big(u_-,x_0,\frac{\rho_{n+1}}{2},0,t\Big)^{p-1}\Big\} \\
\nonumber &\le \gamma b^n\rho^s\Big[S_{n+1}+\Big(\frac{t}{\rho^{\lambda_1}}\Big)^\frac{1}{2-p}\Big]^\frac{2(p-1)}{p}\Big(\frac{t}{\rho^{\lambda_1}}\Big)^\frac{1}{p}P_-,
\end{align}
where $P_-$ is defined by \eqref{pert} and $b>1$ depends on the data. For $J_7$ we have the following estimate:
\begin{align}\label{vol4}
J_7 &\le \Big[\int_0^t \tau^\frac{1-p}{p}\,d\tau\Big]\Big[\sup_{0<\tau<t}\,\int_{B_{n+1}}(u(x,\tau)+\nu_n)^\frac{2(p-1)}{p}\,dx\Big]\Big[\sup_{x\in B_{n+1}}\,\int_{B_{n+1}}\frac{dy}{|x-y|^{N+ps-p}}\Big] \\
\nonumber &\le \gamma t^\frac{1}{p}\Big[\sup_{0<\tau<t}\,\int_{B_{n+1}}(u(x,\tau)+\nu_n)\,dx\Big]^\frac{2(p-1)}{p}|B_{n+1}|^\frac{2-p}{p}\Big[\int_{B_{4\rho(0)}}\frac{dz}{|z|^{N+ps-p}}\Big] \\
\nonumber &\le \gamma t^\frac{1}{p}\rho^{\frac{N(2-p)}{p}+p-ps}\Big[\sup_{0<\tau<t}\,\int_{B_{n+1}}u(x,\tau)\,dx+\nu_n\rho_{n+1}^N\Big]^\frac{2(p-1)}{p} \\
\nonumber &\le \gamma\rho^{p+s-ps}\Big[S_{n+1}+\Big(\frac{t}{\rho^{\lambda_1}}\Big)^\frac{1}{2-p}\Big]^\frac{2(p-1)}{p}\Big(\frac{t}{\rho^{\lambda_1}}\Big)^\frac{1}{p}.
\end{align}
Plugging \eqref{vol3} and \eqref{vol4} back into \eqref{vol2}, we find $\gamma,b>1$ depending on the data s.t.\
\[J_3 \le \gamma b^n\Big[S_{n+1}+\Big(\frac{t}{\rho^{\lambda_1}}\Big)^\frac{1}{2-p}\Big]^\frac{2(p-1)}{p}\Big(\frac{t}{\rho^{\lambda_1}}\Big)^\frac{1}{p}P_-^\frac{p-1}{p}\]
Next we apply Young's inequality \eqref{young} with $q=p/(2(p-1))>1$ and $\eps\in(0,1)$ to be determined later:
\beq\label{vol5}
J_3 \le \eps\Big[S_{n+1}+\Big(\frac{t}{\rho^{\lambda_1}}\Big)^\frac{1}{2-p}\Big]+\gamma_\eps b^n\Big(\frac{t}{\rho^{\lambda_1}}\Big)^\frac{1}{2-p}P_-^\frac{p-1}{2-p},
\eeq
with $\gamma_\eps\to\infty$ as $\eps\to 0$ and $b>1$ depending on the data. The integrals $J_4$, $J_5$ are easier to deal with. To estimate $J_4$, we first recall that $\xi_n=0$ in $\check B_n^c$ and $0\le\xi_n\le 1$ in all of $\R^N$, hence
\[J_4 \le \gamma\int_0^t\iint_{\check B_n\times\hat B_n^c}\frac{u^{p-1}(x,\tau)}{|x-y|^{N+ps}}\,dx\,dy\,d\tau.\]
Besides, we note that for all $x\in\check B_n$, $y\in\hat B_n^c$ we have
\[|x-y| \ge |y-x_0|-|x-x_0| \ge \frac{|y-x_0|}{\gamma 2^n}.\]
So, using both H\"older's and Young's inequalities \eqref{young} with $q=1/(p-1)>1$ and $\eps\in(0,1)$ to be determined, we get
\begin{align}\label{vol6}
J_4 &\le \gamma t\Big[\sup_{0<\tau<t}\,\int_{\check B_n}u^{p-1}(x,\tau)\,dx\Big]\Big[\sup_{x\in\check B_n}\,\int_{\hat B_n^c}\frac{dy}{|x-y|^{N+ps}}\Big] \\
\nonumber &\le \gamma 2^{(N+ps)n}t\Big[\sup_{0<\tau<t}\,\int_{B_{n+1}}u^{p-1}(x,\tau)\,dx\Big]\Big[\int_{B_\rho^c(0)}\frac{dz}{|z|^{N+ps}}\Big] \\
\nonumber &\le \frac{\gamma b^nt}{\rho^{ps}}\Big[\sup_{0<\tau<t}\,\int_{B_{n+1}}u(x,\tau)\,dx\Big]^{p-1}|B_{n+1}|^{2-p} \\
\nonumber &\le \frac{\gamma b^nt}{\rho^{\lambda_1}}S_{n+1}^{p-1} \le \eps S_{n+1}+\gamma_\eps b^n\Big(\frac{t}{\rho^{\lambda_1}}\Big)^\frac{1}{2-p},
\end{align}
again with $\gamma_\eps\to\infty$ as $\eps\to 0$ and $b>1$ depending on the data. The estimate of $J_5$ begins as above, but involves a further tail term:
\begin{align}\label{vol7}
J_5 &\le \gamma\int_0^t\iint_{\check B_n\times\hat B_n^c}\frac{u_-^{p-1}(y,\tau)}{|x-y|^{N+ps}}\,dx\,dy\,d\tau \\
\nonumber &\le \gamma 2^{(N+ps)n}t|\check B_n|\Big[\sup_{0<\tau<t}\,\int_{\hat B_n^c}\frac{u_-^{p-1}(y,\tau)}{|y-x_0|^{N+ps}}\,dy\Big] \\
\nonumber &\le \gamma b^n t\rho^{N-ps}{\rm Tail}\Big(u_-,x_0,\frac{\hat\rho_n}{2},0,t\Big)^{p-1} \\
\nonumber &\le \gamma b^n \Big(\frac{t}{\rho^{\lambda_1}}\Big)^\frac{1}{2-p}P_-,
\end{align}
where we used comparability of $\rho$ and $\hat\rho_n$, and as usual $b>1$ only depends on the data. Next we gather all estimates from \eqref{vol5}, \eqref{vol6}, and \eqref{vol7} to find the following estimate for $J_2$, holding for $\eps\in(0,1)$ to be determined, $\gamma_\eps\to\infty$ as $\eps\to 0$, and $b>1$ depending on the data:
\[J_2 \le \eps S_{n+1}+\gamma_\eps b^n\Big(\frac{t}{\rho^{\lambda_1}}\Big)^\frac{1}{2-p}\big(P_-+P_-^\frac{p-1}{2-p}\big),\]
where we have used $P_-\ge 1$, but we could not choose a unique exponent for $P_-$ since $(p-1)/(2-p)$ spans the whole interval $(0,\infty)$ for $p\in(1,2)$. Now \eqref{vol1} yields an iterative bound for the sequence $(S_n)$:
\begin{align}\label{vol8}
S_n &\le J_1+\eps S_{n+1}+\gamma_\eps b^n\Big(\frac{t}{\rho^{\lambda_1}}\Big)^\frac{1}{2-p}\big(P_-+P_-^\frac{p-1}{2-p}\big) \\
\nonumber &\le \eps S_{n+1}+\gamma_\eps b^n\Big[J_1+\Big(\frac{t}{\rho^{\lambda_1}}\Big)^\frac{1}{2-p}\big(P_-+P_-^\frac{p-1}{2-p}\big)\Big].
\end{align}
Iterating on \eqref{vol8} from $0$ to $n$, we find
\begin{align*}
S_0 &\le \eps S_1+\gamma\Big[J_1+\Big(\frac{t}{\rho^{\lambda_1}}\Big)^\frac{1}{2-p}\big(P_-+P_-^\frac{p-1}{2-p}\big)\Big] \\
&\le \eps^2 S_2+\gamma(1+\eps b)\Big[J_1+\Big(\frac{t}{\rho^{\lambda_1}}\Big)^\frac{1}{2-p}\big(P_-+P_-^\frac{p-1}{2-p}\big)\Big] \ \ldots \\
&\le \eps^n S_n+\gamma\sum_{i=0}^{n-1}(\eps b)^i\Big[J_1+\Big(\frac{t}{\rho^{\lambda_1}}\Big)^\frac{1}{2-p}\big(P_-+P_-^\frac{p-1}{2-p}\big)\Big].
\end{align*}
Choose now $\eps\in(0,1/b)$ so that the series on the right hand side converges, and recall that $(S_n)$ is a bounded sequence. So, letting $n\to\infty$, we get
\[S_0 \le \gamma\Big[J_1+\Big(\frac{t}{\rho^{\lambda_1}}\Big)^\frac{1}{2-p}\big(P_-+P_-^\frac{p-1}{2-p}\big)\Big],\]
which rephrases as
\begin{align*}
\sup_{0<\tau<t}\,\int_{B_\rho(x_0)}u(x,\tau)\,dx &\le \gamma\inf_{0<\tau<t}\,\int_{B_{2\rho}(x_0)}u(x,\tau)\,dx+\gamma\Big(\frac{t}{\rho^{\lambda_1}}\Big)^\frac{1}{2-p}\big(P_-+P_-^\frac{p-1}{2-p}\big),
\end{align*}
yielding the conclusion as soon as we recall the definition of $P_-$. \qed
\vskip4pt
\noindent
Combining Theorems \ref{lr} and \ref{l1}, it is not difficult to obtain a $L^1-L^\infty$ estimate, provided the solution $u$ is locally bounded in addition to the previous assumptions:
\vskip4pt
\noindent
{\em Proof of Theorem \ref{linf}.} First, from Theorem \ref{l1} we derive the following inequality involving the mean values of $u$ in different balls:
\begin{align}\label{linf1}
\sup_{0<\tau<t}\,\fint_{B_\rho(x_0)}u(x,\tau)\,dx &\le \gamma\Big[\inf_{0<\tau<t}\,\fint_{B_{2\rho}(x_0)}u(x,\tau)\,dx\Big]+\frac{\gamma}{\rho^N}\Big(\frac{t}{\rho^{\lambda_1}}\Big)^\frac{1}{2-p}\big(P_-+P_-^\frac{p-1}{2-p}\big) \\
\nonumber &\le \gamma\Big[\inf_{0<\tau<t}\,\fint_{B_{2\rho}(x_0)}u(x,\tau)\,dx\Big]+\gamma\Big(\frac{t}{\rho^{ps}}\Big)^\frac{1}{2-p}\big(P_-+P_-^\frac{p-1}{2-p}\big),
\end{align}
where $\gamma$ depends on the data. Besides, since $\lambda_1>0$ and $u$ is locally bounded, we may apply Theorem \ref{lr} with $r=1$:
\beq\label{linf2}
\sup_{B_{\rho/2}(x_0)\times(t/2,t)}\,u \le \gamma\Big[\fiint_{B_\rho(x_0)\times(0,t)}u(x,\tau)\,dx\,d\tau\Big]^\frac{ps}{\lambda_1}\Big(\frac{t}{\rho^{ps}}\Big)^{-\frac{N}{\lambda_1}}+\gamma\Big(\frac{t}{\rho^{ps}}\Big)^\frac{1}{2-p}P_+,
\eeq
with a possibly bigger $\gamma>0$ depending on the data. Concatenating \eqref{linf1} and \eqref{linf2} and changing the constant $\gamma$ conveniently, we find
\begin{align*}
\sup_{B_{\rho/2}(x_0)\times(t/2,t)}\,u &\le \gamma\Big[\sup_{0<\tau<t}\,\fint_{B_\rho(x_0)}u(x,\tau)\,dx\Big]^\frac{ps}{\lambda_1}\Big(\frac{t}{\rho^{ps}}\Big)^{-\frac{N}{\lambda_1}}+\gamma\Big(\frac{t}{\rho^{ps}}\Big)^\frac{1}{2-p}P_+ \\
&\le \gamma\Big[\inf_{0<\tau<t}\,\fint_{B_{2\rho}(x_0)}u(x,\tau)\,dx+\Big(\frac{t}{\rho^{ps}}\Big)^\frac{1}{2-p}\big(P_-+P_-^\frac{p-1}{2-p}\big)\Big]^\frac{ps}{\lambda_1}\Big(\frac{t}{\rho^{ps}}\Big)^{-\frac{N}{\lambda_1}}+\gamma\Big(\frac{t}{\rho^{ps}}\Big)^\frac{1}{2-p}P_+ \\
&\le \gamma\Big[\inf_{0<\tau<t}\,\fint_{B_{2\rho}(x_0)}u(x,\tau)\,dx\Big]^\frac{ps}{\lambda_1}\Big(\frac{t}{\rho^{ps}}\Big)^{-\frac{N}{\lambda_1}} \\
&+ \gamma\Big(\frac{t}{\rho^{ps}}\Big)^{\frac{ps}{(2-p)\lambda_1}-\frac{N}{\lambda_1}}\big(P_-+P_-^\frac{p-1}{2-p}\big)^\frac{ps}{\lambda_1}+\gamma\Big(\frac{t}{\rho^{ps}}\Big)^\frac{1}{2-p}P_+ \\
&\le \gamma\Big[\inf_{0<\tau<t}\,\int_{B_{2\rho}(x_0)}u(x,\tau)\,dx\Big]^\frac{ps}{\lambda_1}t^{-\frac{N}{\lambda_1}}+\gamma\Big(\frac{t}{\rho^{ps}}\Big)^\frac{1}{2-p}\big(P_++P_-^\frac{ps}{\lambda_1}+P_-^\frac{(p-1)ps}{(2-p)\lambda_1}\big),
\end{align*}
which concludes the proof. \qed

\noindent
It is worth pointing out the following special cases of Theorems \ref{l1} and \ref{linf} for {\em globally non-negative} solutions (for which $P_-=1$):

\begin{corollary}\label{pos}
{\rm (Globally non-negative solutions)} Let $u$ be a solution of \eqref{eq}, s.t.\ $u\ge 0$ in $\R^N\times(0,T)$, and $B_{4\rho}(x_0)\times(0,t)\subset\Omega_T$. There exists a constant $\gamma>0$ depending only on the data s.t.
\begin{enumroman}
\item\label{pos1} for all $1<p<2$ and $\lambda_1$ of any sign,
\[\sup_{0<\tau<t}\,\int_{B_\rho(x_0)}u(x,\tau)\,dx \le \gamma\Big[\inf_{0<\tau<t}\,\int_{B_{2\rho}(x_0)}u(x,\tau)\,dx\Big]+\gamma\Big(\frac{t}{\rho^{\lambda_1}}\Big)^\frac{1}{2-p};\]
\item\label{pos2} if $\lambda_1>0$ and $u$ is locally bounded, then 
\begin{align*}
\sup_{B_{\rho/2}(x_0)\times(t/2,t)}\,u &\le \gamma\Big[\inf_{0<\tau<t}\,\int_{B_{2\rho}(x_0)}u(x,\tau)\,dx\Big]^\frac{ps}{\lambda_1}t^{-\frac{N}{\lambda_1}} \\
&+ \gamma\Big(\frac{t}{\rho^{ps}}\Big)^\frac{1}{2-p}\max\Big\{1,\,\Big(\frac{t}{\rho^{ps}}\Big)^\frac{1-p}{2-p}{\rm Tail}\Big(u,x_0,\frac{\rho}{2},0,t\Big)^{p-1}\Big\}.
\end{align*}
\end{enumroman}
\end{corollary}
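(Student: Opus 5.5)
This is a direct specialization of Theorems \ref{l1} and \ref{linf}, and the only work is to evaluate the perturbation terms $P_\pm$ from \eqref{pert} when $u\ge 0$ in all of $\R^N\times(0,T)$. In that case $u_-\equiv 0$ in $\R^N\times(0,T)$. Hence ${\rm Tail}(u_-,x_0,\rho/2,0,t)=0$ and therefore $P_-=\max\{1,0\}=1$. Likewise $u_+=u$, so ${\rm Tail}(u_+,x_0,\rho/2,0,t)={\rm Tail}(u,x_0,\rho/2,0,t)$. The local hypotheses of the two theorems are clearly met: $u\ge 0$ in $B_{4\rho}(x_0)\times(0,t)\subset\Omega_T$ follows from global non-negativity.

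For \ref{pos1}, I would invoke Theorem \ref{l1}, which holds for every $p\in(1,2)$ with no sign condition on $\lambda_1$ and no boundedness assumption. Its second and third terms become $\gamma(t/\rho^{\lambda_1})^{1/(2-p)}\cdot 1$ and $\gamma(t/\rho^{\lambda_1})^{1/(2-p)}\cdot 1^{(p-1)/(2-p)}$. Merging them into a single term with constant $2\gamma$ gives the claim.

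For \ref{pos2}, I would apply Theorem \ref{linf}, using $\lambda_1>0$ and local boundedness. The three terms involving $P_-$ raised to the powers $1$, $ps/\lambda_1$ and $(p-1)ps/((2-p)\lambda_1)$ all reduce to $\gamma(t/\rho^{ps})^{1/(2-p)}$. Since $P_+\ge 1$, they are absorbed into the $P_+$ term, i.e.\ into $\gamma(t/\rho^{ps})^{1/(2-p)}\max\{1,(t/\rho^{ps})^{(1-p)/(2-p)}{\rm Tail}(u,x_0,\rho/2,0,t)^{p-1}\}$ after enlarging $\gamma$.

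There is no genuine obstacle here. The one point to check is that the exponents applied to $P_-$ are positive, so that they give $1$ and not an ill-defined quantity; this holds because $p\in(1,2)$ and $\lambda_1>0$. The constant $\gamma$ still depends only on the data.
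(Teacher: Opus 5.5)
Your proposal is correct and is exactly the paper's argument: the corollary is presented as a direct specialization of Theorems \ref{l1} and \ref{linf}, where $u_-\equiv 0$ gives $P_-=1$ and $u_+=u$, and the $P_-$-terms are absorbed into the $P_+$-term because $P_+\ge 1$. Your check that the exponents on $P_-$ are positive is unnecessary, since $1$ raised to any real power is $1$.
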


\section{Backward $L^r-L^r$ estimate}\label{sec5}

\noindent
In this section we prove Theorem \ref{back}. With this aim in mind, we assume $u$ to be a locally bounded solution of \eqref{eq} s.t.\ $u\ge 0$ in $B_{4\rho}(x_0)\times(0,t)\subset\Omega_T$, and we assume that the right-hand side is finite, i.e
\[
\sup_{0<\tau<t}\,\int_{B_\rho^c(x_0)}u_+^r(x,\tau)\,dx\, + \int_{B_{2\rho}(x_0)}u^r(x,0)\,dx+\Big(\frac{t^r}{\rho^{\lambda_r}}\Big)^\frac{1}{r}\, \, < \infty\,,
\]
otherwise there is nothing to prove. We argue in a dichotomic form, assuming
\beq\label{bal}
\sup_{0<\tau<t}\,\int_{B_\rho(x_0)}u^r(x,\tau)\,dx > \sup_{0<\tau<t}\,\int_{B_\rho^c(x_0)}u_+^r(x,\tau)\,dx.
\eeq
The crucial step is the following technical lemma:

\begin{lemma}\label{bl}
Let \eqref{bal} hold, $\sigma\in(0,1)$. Then, there exists $\gamma>0$ depending on the data and $r$, s.t.\
\begin{align*}
\sup_{0<\tau<t}\,\int_{B_\rho(x_0)}u^r(x,\tau)\,dx &\le \int_{B_{(1+\sigma)\rho}(x_0)}u^r(x,0)\,dx \\
&+ \frac{\gamma}{\sigma^{N+ps}}\Big[\sup_{0<\tau<t}\,\int_{B_{(1+\sigma)\rho}(x_0)}u^r(x,\tau)\,dx\Big]^\frac{p+r-2}{r}\Big(\frac{t^r}{\rho^{\lambda_r}}\Big)^\frac{1}{r}.
\end{align*}
\end{lemma}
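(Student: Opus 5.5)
Test the equation with $\varphi=u^{r-1}\xi^p$, where $\xi\in C^1_c(B_{(1+\sigma/2)\rho}(x_0))$ is a cut-off with $\xi=1$ in $B_\rho(x_0)$, $0\le\xi\le1$, $|\nabla\xi|\le\gamma/(\sigma\rho)$. The parabolic part is then $\frac1r\int u^r\xi^p\,dx$. The test function is not differentiable in time, so we justify it via the time mollification of Appendix \ref{app}, as in Lemma \ref{tst}; local boundedness and $u\ge0$ in $B_{4\rho}$ make $u^{r-1}\xi^p$ admissible. For $r>1$ one can use $(u+\epsilon)^{r-1}$ and let $\epsilon\to0$; for $r=1$ the test function is just $\xi^p$, as in the proof of Theorem \ref{l1}. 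Integrating over $(0,\tau)$ and taking the supremum in $\tau$ gives
\[\sup_{0<\tau<t}\int_{B_\rho}u^r(x,\tau)\,dx\le\int_{B_{(1+\sigma)\rho}}u^r(x,0)\,dx+\gamma\int_0^t|D(\tau)|\,d\tau,\]
where $D$ is the nonlocal term. It remains to bound the diffusive part by
\[\gamma\sigma^{-N-ps}\Big[\sup_\tau\int_{B_{(1+\sigma)\rho}}u^r\Big]^{\frac{p+r-2}{r}}\rho^{-\lambda_r/r}\]
for each $\tau$. Integrating in $\tau$ then produces the factor $t=(t^r)^{1/r}$.

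Split the double integral as in Lemma \ref{tst}, into the part on $\hat B\times\hat B$ with $\hat B=B_{(1+\sigma)\rho}$ and the part on $\hat B\times\hat B^c$.

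\textbf{Local part.} Write
\[\xi^p(x)u^{r-1}(x)-\xi^p(y)u^{r-1}(y)=\xi^p(x)\big(u^{r-1}(x)-u^{r-1}(y)\big)+u^{r-1}(y)\big(\xi^p(x)-\xi^p(y)\big).\]
The first piece gives a good (signed) term. By Lemma \ref{alg}\ref{alg1} it is comparable to
\[\big|u^{\frac{p+r-2}{p}}(x)-u^{\frac{p+r-2}{p}}(y)\big|^p\xi^p.\]
The second piece is controlled by \eqref{liao} and Young's inequality, with a weight chosen to reabsorb into the good term, as in \eqref{tst4}. What is left is
\[\gamma\iint(u(x)+u(y))^{p+r-2}\frac{|\xi(x)-\xi(y)|^p}{|x-y|^{N+ps}}\,dx\,dy\le\frac{\gamma}{\sigma^p\rho^{ps}}\int_{\hat B}u^{p+r-2}\,dx,\]
using Lemma \ref{alg}\ref{alg2} where needed. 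Since $p+r-2<r$, Hölder's inequality gives
\[\int_{\hat B}u^{p+r-2}\le\Big(\int_{\hat B}u^r\Big)^{\frac{p+r-2}{r}}|\hat B|^{\frac{2-p}{r}},\]
and $\rho^{-ps}\rho^{N(2-p)/r}=\rho^{-\lambda_r/r}$, which is exactly the claimed scaling.

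\textbf{Nonlocal part.} The kernel satisfies $|x-y|\ge\gamma\sigma|y-x_0|$, so $\int_{\hat B^c}|x-y|^{-N-ps}dy\le\gamma\sigma^{-N-ps}\rho^{-ps}$. The term $u^{p-1}(x)u^{r-1}(x)\xi^p(x)$ is handled by the same Hölder argument. The tail contribution from $u_+(y)$ on $\hat B^c$ is unfavourable: it has the form
\[\Big(\int u^{r-1}\xi^p\Big)\int_{\hat B^c}\frac{u_+^{p-1}(y)}{|y-x_0|^{N+ps}}\,dy,\]
and negative values of $u(y)$ only help the sign. This is where \eqref{bal} enters, replacing the tail condition. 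Because $|y-x_0|\ge\rho$, Hölder's inequality in $y$ with exponent $r/(p-1)$ (valid when $r\ge p-1$, always true here) bounds
\[\int_{\hat B^c}\frac{u_+^{p-1}}{|y-x_0|^{N+ps}}\le\Big(\int_{B_\rho^c}u_+^r\Big)^{\frac{p-1}{r}}\rho^{-ps-N\frac{p-1}{r}}.\]
By \eqref{bal} this is at most
\[\Big(\sup_\tau\int_{B_\rho}u^r\Big)^{\frac{p-1}{r}}\rho^{-ps-N\frac{p-1}{r}}.\]
Also $\int u^{r-1}\le(\int_{\hat B}u^r)^{\frac{r-1}{r}}\rho^{N/r}$. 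Multiplying, the exponents add to $(p+r-2)/r$ and the powers of $\rho$ to $-ps+N(2-p)/r=-\lambda_r/r$. The resulting constant is $\sigma^{-N-ps}$.

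The main obstacle is the local algebra for general $r\ge1$. For $r<2$, Lemma \ref{alg}\ref{alg2} (stated for $q\ge2$) is unavailable, and the reabsorption of the cut-off error must be done by hand, through a weighted Young's inequality mimicking the proof of Lemma \ref{tst}. For $r=1$ there is no good term at all, and we would estimate directly as in $J_3$ of Theorem \ref{l1}, which still yields the exponent $(p-1)/r$ via Hölder's inequality.
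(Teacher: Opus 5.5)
For $r>1$ (the only range that the paper's proof and Lemma \ref{mob} actually cover), your argument is correct. It uses the same ingredients as the paper's proof:
\begin{itemize}
\item the test function $u^{r-1}\xi^p$, justified through the $u_{\bar h}$-mollification with a $+\nu$ shift;
\item absorption of the cut-off error via \eqref{liao}, with a weight of the form $\delta\,(\text{difference})/(\text{larger value})$;
\item H\"older's inequality producing $\rho^{-ps}|\hat B|^{(2-p)/r}\approx\rho^{-\lambda_r/r}$;
\item \eqref{bal} to trade the exterior $L^r$ mass for the interior one.
\end{itemize}

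The difference is in how the double integral is organized. The paper never splits into $\hat B\times\hat B$ and $\hat B\times\hat B^c$. It uses symmetry to restrict to $\{u(x)>u(y)\}$ and discards $\{\xi(x)\ge\xi(y)\}$, where the integrand is non-negative. On the remaining set $A^+_\tau$, which forces $y\in\tilde B$ and hence $u(x)>u(y)\ge 0$, it performs a single decomposition valid for both near and far $x$. After absorption only $u^{p+r-2}(x)(\xi(y)-\xi(x))^p$ survives. The far contribution therefore appears as $\int_{\hat B^c}u_+^{p+r-2}|x-x_0|^{-N-ps}\,dx$, handled by H\"older with exponents $r/(p+r-2)$ and $r/(2-p)$.

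You instead leave the far region undecomposed and bound its negative part by $u_+^{p-1}(y)\,u^{r-1}(x)\xi^p(x)$. The exterior function then carries only the power $p-1$, i.e.\ the usual tail integral $\int_{\hat B^c}u_+^{p-1}|y-x_0|^{-N-ps}\,dy$. As a result \eqref{bal} is needed only on a factor with exponent $(p-1)/r$, and it could be traded for a tail assumption. The paper's route is shorter, since one algebraic decomposition serves both regions.

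Two cautions.

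\textbf{Pairing in your decomposition.} Your splitting $\xi^p(x)\big(u^{r-1}(x)-u^{r-1}(y)\big)+u^{r-1}(y)\big(\xi^p(x)-\xi^p(y)\big)$ absorbs directly only where the good term carries the larger cut-off and the error carries the larger value of $u$. That happens on $\{u(x)<u(y),\ \xi(x)>\xi(y)\}$. So reduce to $\{u(x)<u(y)\}$ by symmetry first; this is the paper's $A^+_\tau$ with $x$ and $y$ swapped. Otherwise \eqref{liao} produces $\eps\,\xi^p(y)$ with $\xi(y)>\xi(x)$, which your good term cannot absorb without an extra case distinction.

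Once this is done, the obstacle you name for $1<r<2$ disappears. For $a>b\ge 0$ the mean value theorem gives $a^{r-1}-b^{r-1}\ge(r-1)a^{r-2}(a-b)$. This is the paper's ``Lagrange's rule''; Lemma \ref{alg} is used there only for $r\ge 2$.

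\textbf{The case $r=1$.} Drop this claim. The estimate of $J_3$ in the proof of Theorem \ref{l1} goes through Lemma \ref{tst}. With $S$ denoting $\sup_\tau\int u$ and $\nu_1=(t/\rho^{\lambda_1})^{1/(2-p)}$, it gives a bound $[S+\nu_1]^{2(p-1)/p}\nu_1^{(2-p)/p}$, times a power of $P_-$. This is strictly weaker than the required $S^{p-1}\nu_1^{2-p}$ when $S\gg\nu_1$. Moreover, without a good term you cannot bound $\iint|u(x)-u(y)|^{p-1}|\xi^p(x)-\xi^p(y)|\,|x-y|^{-N-ps}$ by $\int u^{p-1}$ at all when $ps\ge 1$, because $|x-y|^{1-N-ps}$ is not locally integrable.
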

\begin{proof}
Fix $t_0\in(0,t)$ and set for brevity
\[B = B_\rho(x_0), \ \tilde B = B_{(1+\sigma/2)\rho}(x_0), \ \hat B = B_{(1+\sigma)\rho}(x_0),\]
so that $B\subset\tilde B\subset\hat B$. Also, let $\xi\in C^\infty_c(\tilde B)$ be a cutoff function s.t.\ $\xi=1$ in $B$, $0\le\xi\le 1$, $|\nabla\xi|\le\gamma/(\sigma\rho)$ in $\R^N$. As in previous cases, the idea is to use as a test function in \eqref{eq}
\[\varphi(x,\tau) = u^{r-1}(x,\tau)\xi^p(x),\]
but this is prevented by several reasons (non-differentiability in time, possible singularity if $r<2$). Nevertheless, by applying a convenient mollification procedure (see Lemma \ref{mob} below), we can find $\gamma>1$ depending on the data and $r$, s.t.\
\begin{align}\label{bl1}
& 0 \ge \frac{1}{r}\int_{\tilde B}u^r(x,\tau)\xi^p(x)\,dx\Big|_0^{t_0} \\
\nonumber &+ \frac{1}{\gamma}\int_0^{t_0}\iint_{\R^N\times\R^N}\frac{|u(x,\tau)-u(y,\tau)|^{p-2}(u(x,\tau)-u(y,\tau))}{|x-y|^{N+ps}}\Big[u^{r-1}(x,\tau)\xi^p(x)-u^{r-1}(y,\tau)\xi^p(y)\Big]\,dx\,dy\,d\tau \\
\nonumber &= I_1+I_2
\end{align}
(note that multiplication by $\xi^p$ selects non-negative values of $u$ in all terms above). We first deal with the evolutive term $I_1$, by exploiting the properties of $\xi$:
\beq\label{bl2}
I_1 \ge \frac{1}{r}\int_B u^r(x,t_0)\,dx-\frac{1}{r}\int_{\tilde B}u^r(x,0)\,dx.
\eeq
We now turn to the diffusive term $I_2$. Set for all $\tau\in(0,t_0)$
\[A_\tau = \big\{(x,y)\in\R^N\times\R^N:\,u(x,\tau)>u(y,\tau)\big\},\]
\[A_\tau^+ = \big\{(x,y)\in A_\tau:\,\xi(x)<\xi(y)\big\}.\]
Note that for all $(x,y)\in A_\tau$ two cases may occur:
\begin{itemize}[leftmargin=1cm]
\item[$(a)$] if $\xi(x)\ge\xi(y)$, then
\[u^{r-1}(x,\tau)\xi^p(x) \ge u^{r-1}(y,\tau)\xi^p(y),\]
in particular the integrand of $I_2$ becomes non-negative;
\item[$(b)$] if $\xi(x)<\xi(y)$ (i.e., $(x,y)\in A_\tau^+$), then we must have $y\in\tilde B$ and
\[u(x,\tau) > u(y,\tau) \ge 0.\]
\end{itemize}
We fix $\tau\in(0,t_0)$ and use symmetry to reduce the space integration domain to $A_\tau$, then we recall $(a)$ above to dismiss positive contributions, and $(b)$ to separate $u^{r-1}$ from the corresponding multiplier $\xi^p$:
\begin{align*}
& \iint_{\R^N\times\R^N}\frac{|u(x,\tau)-u(y,\tau)|^{p-2}(u(x,\tau)-u(y,\tau))}{|x-y|^{N+ps}}\Big[u^{r-1}(x,\tau)\xi^p(x)-u^{r-1}(y,\tau)\xi^p(y)\Big]\,dx\,dy \\
&\ge 2\iint_{A_\tau^+}\frac{(u(x,\tau)-u(y,\tau))^{p-1}}{|x-y|^{N+ps}}\Big[u^{r-1}(x,\tau)\xi^p(x)-u^{r-1}(y,\tau)\xi^p(y)\Big]\,dx\,dy \\
&= 2\iint_{A_\tau^+}\frac{(u(x,\tau)-u(y,\tau))^{p-1}}{|x-y|^{N+ps}}u^{r-1}(x,\tau)(\xi^p(x)-\xi^p(y))\,dx\,dy \\
&+ 2\iint_{A_\tau^+}\frac{(u(x,\tau)-u(y,\tau))^{p-1}}{|x-y|^{N+ps}}(u^{r-1}(x,\tau)-u^{r-1}(y,\tau))\xi^p(y)\,dx\,dy \\
&= I_3+I_4.
\end{align*}
To estimate $I_3$ we use inequality \eqref{liao} with $a=\xi(y)>\xi(x)=b$, $\delta\in(0,1)$ to be determined, and
\[\eps = \delta\,\frac{u(x,\tau)-u(y,\tau)}{u(x,\tau)} \in (0,1),\]
so we get the following pointwise estimate for all $(x,y)\in A_\tau^+$:
\[\xi^p(y)-\xi^p(x) \le \delta\,\frac{u(x,\tau)-u(y,\tau)}{u(x,\tau)}\xi^p(y)+\frac{\gamma}{\delta^{p-1}}\frac{u^{p-1}(x,\tau)}{(u(x,\tau)-u(y,\tau))^{p-1}}(\xi(y)-\xi(x))^p.\]
Reversing the inequality above, and recalling that $u(x,\tau)>0$, we get
\begin{align}\label{bl3}
I_3 &\ge -\delta\iint_{A_\tau^+}\frac{(u(x,\tau)-u(y,\tau))^p}{|x-y|^{N+ps}}u^{r-2}(x,\tau)\xi^p(y)\,dx\,dy \\
\nonumber &- \frac{\gamma}{\delta^{p-1}}\iint_{A_\tau^+}u^{p+r-2}(x,\tau)\frac{(\xi(y)-\xi(x))^p}{|x-y|^{N+ps}}\,dx\,dy.
\end{align}
To estimate $I_4$, we distinguish two cases:
\begin{itemize}[leftmargin=1cm]
\item[$(a)$] if $r\ge 2$, then we apply Lemma \ref{alg}, concatenating \ref{alg1} and \ref{alg2}, with $q=r$, $a=u(x,\tau)>u(y,\tau)=b$, to get the pointwise estimate
\[(u(x,\tau)-u(y,\tau))^{p-1}(u^{r-1}(x,\tau)-u^{r-1}(y,\tau)) \ge \frac{1}{\gamma}(u(x,\tau)-u(y,\tau))^p(u(x,\tau)+u(y,\tau))^{r-2},\]
which in turn produces
\begin{align*}
I_4 &\ge \frac{1}{\gamma}\iint_{A_\tau^+}\frac{(u(x,\tau)-u(y,\tau))^p}{|x-y|^{N+ps}}(u(x,\tau)+u(y,\tau))^{r-2}\xi^p(y)\,dx\,dy \\
&\ge \frac{1}{\gamma}\iint_{A_\tau^+}\frac{(u(x,\tau)-u(y,\tau))^p}{|x-y|^{N+ps}}u^{r-2}(x,\tau)\xi^p(y)\,dx\,dy;
\end{align*}
\item[$(b)$] if $1<r<2$, then we apply Lagrange's rule:
\begin{align*}
I_4 &\ge \frac{1}{\gamma}\iint_{A_\tau^+}\frac{(u(x,\tau)-u(y,\tau))^p}{|x-y|^{N+ps}}\min\big\{u^{r-2}(x,\tau),\,u^{r-2}(y,\tau)\big\}\xi^p(y)\,dx\,dy \\
&= \frac{1}{\gamma}\iint_{A_\tau^+}\frac{(u(x,\tau)-u(y,\tau))^p}{|x-y|^{N+ps}}u^{r-2}(x,\tau)\xi^p(y)\,dx\,dy.
\end{align*}
\end{itemize}
Not that the estimates in $(a)$, $(b)$ above coincide, up to a different constant $\gamma>1$ depending on the data and $r$. Now choose $\delta\in(0,1)$ in \eqref{bl3} so small that the first term can be reabsorbed in the estimate of $I_4$. Subsequently, we dismiss positive contributions to get for all $\tau\in(0,t_0)$
\begin{align*}
& \iint_{\R^N\times\R^N}\frac{|u(x,\tau)-u(y,\tau)|^{p-2}(u(x,\tau)-u(y,\tau))}{|x-y|^{N+ps}}\Big[u^{r-1}(x,\tau)\xi^p(x)-u^{r-1}(y,\tau)\xi^p(y)\Big]\,dx\,dy \\
&\ge \Big(\frac{1}{\gamma}-\delta\Big)\iint_{A_\tau^+}\frac{(u(x,\tau)-u(y,\tau))^p}{|x-y|^{N+ps}}u^{r-2}(x,\tau)\xi^p(y)\,dx\,dy \\
&- \frac{\gamma}{\delta^{p-1}}\iint_{A_\tau^+}u^{p+r-2}(x,\tau)\frac{(\xi(y)-\xi(x))^p}{|x-y|^{N+ps}}\,dx\,dy \\
&\ge -\gamma\iint_{A_\tau^+}u^{p+r-2}(x,\tau)\frac{(\xi(y)-\xi(x))^p}{|x-y|^{N+ps}}\,dx\,dy.
\end{align*}
Next integrate in time and recall that $y\in\tilde B$ for all $(x,y)\in A_\tau^+$:
\begin{align}\label{bl4}
I_2 &\ge -\gamma\int_0^{t_0}\iint_{A_\tau^+}u^{p+r-2}(x,\tau)\frac{(\xi(y)-\xi(x))^p}{|x-y|^{N+ps}}\,dx\,dy\,d\tau \\
\nonumber &\ge -\gamma\int_0^{t_0}\iint_{\hat B\times\tilde B}u^{p+r-2}(x,\tau)\frac{(\xi(y)-\xi(x))^p}{|x-y|^{N+ps}}\,dx\,dy\,d\tau \\
\nonumber &-\gamma\int_0^{t_0}\iint_{\hat B^c\times\tilde B}u_+^{p+r-2}(x,\tau)\frac{(\xi(y)-\xi(x))^p}{|x-y|^{N+ps}}\,dx\,dy\,d\tau \\
\nonumber &= I_5+I_6.
\end{align}
To estimate $I_5$, we apply the properties of $\xi$:
\begin{align*}
I_5 &\ge -\frac{\gamma}{\sigma^p\rho^p}\int_0^{t_0}\iint_{\hat B	\times\tilde B}\frac{u^{p+r-2}(x,\tau)}{|x-y|^{N+ps}}\,dx\,dy\,d\tau \\
&\ge -\frac{\gamma t}{\sigma^p\rho^p}\Big[\sup_{0<\tau<t}\,\int_{\hat B}u^{p+r-2}(x,\tau)\,dx\Big]\Big[\sup_{x\in\hat B}\,\int_{\tilde B}\frac{dy}{|x-y|^{N+ps-p}}\Big] \\
&\ge -\frac{\gamma t}{\sigma^p\rho^p}\Big[\sup_{0<\tau<t}\,\int_{\hat B}u^r(x,\tau)\,dx\Big]^\frac{p+r-2}{r}|\hat B|^\frac{2-p}{r}\Big[\int_{B_{2\rho}(0)}\frac{dz}{|z|^{N+ps-p}}\Big] \\
&\ge -\frac{\gamma}{\sigma^p}\Big[\sup_{0<\tau<t}\,\int_{\hat B}u^r(x,\tau)\,dx\Big]^\frac{p+r-2}{r}\Big(\frac{t^r}{\rho^{\lambda_r}}\Big)^\frac{1}{r}.
\end{align*}
The estimate of $I_6$ is more delicate. First note that for all $(x,y)\in\hat B^c\times\tilde B$
\[|x-y| \ge |x-x_0|-|y-x_0| \ge \frac{\sigma}{2+2\sigma}|x-x_0|,\]
so by boundedness of $\xi$ and weighted H\"older's inequality we have
\begin{align*}
I_6 &\ge -\gamma\Big(\frac{2+2\sigma}{\sigma}\Big)^{N+ps}\int_0^t\iint_{\hat B^c\times\tilde B}\frac{u_+^{p+r-2}(x,\tau)}{|x-x_0|^{N+ps}}\,dx\,dy\,d\tau \\
&\ge -\frac{\gamma t}{\sigma^{N+ps}}|\tilde B|\Big[\sup_{0<\tau<t}\,\int_{\hat B^c}\frac{u_+^{p+r-2}(x,\tau)}{|x-x_0|^{N+ps}}\,dx\Big] \\
&\ge -\frac{\gamma\rho^N t}{\sigma^{N+ps}}\Big[\sup_{0<\tau<t}\,\int_{\hat B^c}u_+^r(x,\tau)\,dx\Big]^\frac{p+r-2}{r}\Big[\int_{\hat B^c}|x-x_0|^{-\frac{(N+ps)r}{2-p}}\,dx\Big]^\frac{2-p}{r} \\
&\ge -\frac{\gamma}{\sigma^{N+ps}}\Big[\sup_{0<\tau<t}\,\int_{B^c}u_+^r(x,\tau)\,dx\Big]^\frac{p+r-2}{r}\Big(\frac{t^r}{\rho^{\lambda_r}}\Big)^\frac{1}{r}.
\end{align*}
Now we recall \eqref{bal} to see that
\[I_6 \ge -\frac{\gamma}{\sigma^{N+ps}}\Big[\sup_{0<\tau<t}\,\int_{B}u^r(x,\tau)\,dx\Big]^\frac{p+r-2}{r}\Big(\frac{t^r}{\rho^{\lambda_r}}\Big)^\frac{1}{r}.\]
Plug these estimates into \eqref{bl4}, recalling that $\sigma^p>\sigma^{N+ps}$ and $B\subset\hat B$, to find the following estimate of the diffusive term:
\beq\label{bl5}
I_2 \ge -\frac{\gamma}{\sigma^{N+ps}}\Big[\sup_{0<\tau<t}\,\int_{\hat B}u^r(x,\tau)\,dx\Big]^\frac{p+r-2}{r}\Big(\frac{t^r}{\rho^{\lambda_r}}\Big)^\frac{1}{r}.
\eeq
Finally, we concatenate \eqref{bl1} with \eqref{bl2} and \eqref{bl5}:
\[\frac{1}{r}\int_B u^r(x,t_0)\,dx \le \frac{1}{r}\int_{\hat B}u^r(x,0)\,dx-\frac{\gamma}{\sigma^{N+ps}}\Big[\sup_{0<\tau<t}\,\int_{\hat B}u^r(x,\tau)\,dx\Big]^\frac{p+r-2}{r}\Big(\frac{t^r}{\rho^{\lambda_r}}\Big)^\frac{1}{r},\]
which yields the conclusion as soon as we multiply by $r$ and take the supremum over $t_0\in(0,t)$.
\end{proof}

\noindent
We can now complete the proof of the main result:
\vskip4pt
\noindent
{\em Proof of Theorem \ref{back} (conclusion).} We assume that \eqref{bal}, otherwise there is nothing to prove. We are going to perform an iteration, setting $\sigma_0=0$, $\rho_0=\rho$, and for all $n\ge 1$
\[\sigma_n = \sum_{i=1}^n\frac{1}{2^i}, \ \rho_n = (1+\sigma_n)\rho,\]
so that $(\sigma_n)$, $(\rho_n)$ are increasing with $\sigma_n\to 1$, $\rho_n\to 2\rho$. Besides, we set $B_n=B_{\rho_n}(x_0)$, hence $B_n\subset B_{n+1}$. We fix $n\ge 0$ and apply Lemma \ref{bl} with $\rho=\rho_n$ and
\[\sigma = \frac{\sigma_{n+1}-\sigma_n}{1+\sigma_n} \in (0,1),\]
implying $(1+\sigma)\rho=\rho_{n+1}$. Note that, by \eqref{bal} and the inclusion $B_0\subset B_n$, we have
\begin{align*}
\sup_{0<\tau<t}\,\int_{B_n}u^r(x,\tau)\,dx &\ge \sup_{0<\tau<t}\,\int_{B_0}u^r(x,\tau)\,dx \\
&\ge \sup_{0<\tau<t}\,\int_{B_0^c}u^r(x,\tau)\,dx \ge \sup_{0<\tau<t}\,\int_{B_n^c}u^r(x,\tau)\,dx.
\end{align*}
Starting from the inequality of Lemma \ref{bl}, and exploiting the definitions above, we get
\begin{align}\label{back1}
\sup_{0<\tau<t}\,\int_{B_n}u^r(x,\tau)\,dx &\le \int_{B_{n+1}}u^r(x,0)\,dx+\gamma\Big(\frac{1+\sigma_n}{\sigma_{n+1}-\sigma_n}\Big)^{N+ps}\Big[\sup_{0<\tau<t}\,\int_{B_{n+1}}u^r(x,\tau)\,dx\Big]^\frac{p+r-2}{r}\Big(\frac{t^r}{\rho_n^{\lambda_r}}\Big)^\frac{1}{r} \\
\nonumber &\le \Big[\int_{B_{n+1}}u^r(x,0)\,dx\Big]^\frac{p+r-2}{r}\Big[\int_{B_{2\rho}(x_0)}u^r(x,0)\,dx\Big]^\frac{2-p}{r} \\
\nonumber &+ \gamma 2^{(N+ps)n}\Big[\sup_{0<\tau<t}\,\int_{B_{n+1}}u^r(x,\tau)\,dx\Big]^\frac{p+r-2}{r}\Big(\frac{t^r}{\rho^{\lambda_r}}\Big)^\frac{1}{r} \\
\nonumber &\le \gamma 2^{(N+ps)n}\Big[\sup_{0<\tau<t}\,\int_{B_{n+1}}u^r(x,\tau)\,dx\Big]^\frac{p+r-2}{r}\Big\{\Big[\int_{B_{2\rho}(x_0)}u^r(x,0)\,dx\Big]^\frac{2-p}{r}+\Big(\frac{t^r}{\rho^{\lambda_r}}\Big)^\frac{1}{r}\Big\},
\end{align}
with $\gamma>0$ independent of $n$. We are going to apply Lemma \ref{int} to the sequence
\[Y_n = \sup_{0<\tau<t}\,\int_{B_n}u^r(x,\tau)\,dx,\]
which is a bounded sequence of non-negative real numbers by assumption $u\in L^r(\R^N\times(0,T))$. We also set
\[b = 2^{N+ps}, \ \eta = \frac{2-p}{r}, \ C= \gamma\Big\{\Big[\int_{B_{2\rho}(x_0)}u^r(x,0)\,dx\Big]^\frac{2-p}{r}+\Big(\frac{t^r}{\rho^{\lambda_r}}\Big)^\frac{1}{r}\Big\},\]
so $b>1$, $0<\eta<1$, $C>0$, and \eqref{back1} implies for all $n\in\N$
\[Y_n \le Cb^nY_{n+1}^{1-\eta}.\]
Therefore, we have
\[Y_0 \le \big(2Cb^\frac{1-\eta}{\eta}\big)^\frac{1}{\eta} \le \gamma\int_{B_{2\rho}(x_0)}u^r(x,0)\,dx+\gamma\Big(\frac{t^r}{\rho^{\lambda_r}}\Big)^\frac{1}{2-p},\]
for some $\gamma>0$ depending on the data and $r$. In view of \eqref{bal}, we conclude. \qed

\section{Extinction time and decay estimates}\label{sec6}

\noindent
In this section we deal with solutions of problem \eqref{cdp}, set in a bounded domain $\Omega$ with a positive initial datum $u_0\in W^{s,p}_0(\Omega)$. First we prove that any such solutions has a finite time of extinction, estimated in terms of the initial datum and possibly the domain's volume:
\vskip4pt
\noindent
{\em Proof of Theorem \ref{fte}.} First we consider case \ref{fte1}, i.e., $1<p<p_c$. Then, an elementary calculation shows that
\[q = \frac{N(2-p)}{ps} > 2.\]
The first step consists in testing problem \eqref{cdp} with $u^{q-1}$, which of course is forbidden by the lack of time-differentiability of $u$. Choosing a convenient test function in Definition \ref{scp} and applying a mollification procedure (see Lemma \ref{mot} \ref{mot1}), we can prove that there exists $\gamma>0$ depending on $N,p,s$ s.t.\ for all $\psi\in W^{1,2}_0(0,T)$, $\psi\ge 0$ in $(0,T)$
\beq\label{fte3}
\int_0^T\Big[-\|u(\cdot,\tau)\|_{L^q(\Omega)}^q\psi'(\tau)+\frac{C_1}{\gamma}\|u(\cdot,\tau)\|_{L^q(\Omega)}^{p+q-2}\psi(\tau)\Big]\,d\tau \le 0,
\eeq
where $C_1$ is as in $(K_2)$. We rephrase \eqref{fte3} as follows. Set for all $t\in(0,T)$
\[U(t) = \|u(\cdot,t)\|_{L^q(\Omega)}^q.\] Since the Sobolev weak time-derivative of $t \rightarrow \|u(\cdot, t)\|_{L^q(\Omega)}^q$ is bounded by \eqref{fte3}, the function $U$ is absolutely continuous on $[0,T]$ (up to the choice of a representative). 
\noindent Hence, from \eqref{fte3} we see that $U$ satisfies the following ordinary differential inequality:
\beq\label{fte4}
\begin{cases}
\displaystyle U'(t)+\frac{C_1}{\gamma}U^\frac{p+q-2}{q}(t) \le 0 & \text{a.e.\ in $(0,T)$} \\
U(0) = \|u_0\|_{L^q(\Omega)}^q.
\end{cases}
\eeq
The mapping
\[t\mapsto U^\frac{2-p}{q}(t) = U^\frac{ps}{N}(t)\]
is as well a.e.\ differentiable. So, integrating on \eqref{fte4}, we have for all $t\in(0,T)$
\begin{align*}
U^\frac{2-p}{q}(t)-U^\frac{2-p}{q}(0) &= \frac{2-p}{q}\int_0^t U^\frac{2-p-q}{q}(\tau)U'(\tau)\,d\tau \\
&\le -\frac{2-p}{q}\frac{C_1}{\gamma}\int_0^1 1\,d\tau = -\frac{C_1t}{\gamma_*},
\end{align*}
with $\gamma_*>0$ depending on $N,p,s$. Equivalently, we have
\[\|u(\cdot,t)\|_{L^q(\Omega)}^{2-p} \le \|u_0\|_{L^q(\Omega)}^{2-p}-\frac{C_1t}{\gamma_*},\]
which implies that $u(\cdot,t)$ vanishes a.e.\ in $\Omega$ as soon as
\[t \ge \frac{\gamma_*}{C_1}\|u_0\|_{L^q(\Omega)}^{2-p}.\]
Recalling that $u$ vanishes in $\Omega^c$ at any time, we obtain \ref{fte1}.
\vskip2pt
\noindent
We now turn to case \ref{fte2}. The proof goes essentially as in the previous case, except we replace the exponent $q$ with $2$. Note, in this connection, that $p\ge p_c$ implies both $\lambda_2\ge 0$ and $p^*_s\ge 2$. By H\"older's inequality, then, we have for all $t\in(0,T)$
\[\|u(\cdot,t)\|_{L^2(\Omega)} \le |\Omega|^\frac{\lambda_2}{2Np}\|u(\cdot,t)\|_{L^{p^*_s}(\Omega)}.\]
Through a convenient testing procedure (see Lemma \ref{mot} \ref{mot2}), we find $\gamma>0$ depending on $N,p,s$ s.t.\ for all $\psi\in W^{1,2}_0(0,T)$, $\psi\ge 0$ in $(0,T)$
\beq\label{fte5}
\int_0^T\Big[-\|u(\cdot,\tau)\|_{L^2(\Omega)}^2\psi'(\tau)+\frac{C_1}{\gamma}|\Omega|^{-\frac{\lambda_2}{2N}}\|u(\cdot,\tau)\|_{L^2(\Omega)}^p\psi(\tau)\Big]\,d\tau \le 0.
\eeq
Reasoning as in the previous case, we set for all $t\in(0,T)$
\[U(t) = \|u(\cdot,t)\|_{L^2(\Omega)}^2,\]
so again $U$ is absolutely continuous in $[0,T]$ a.e.\ differentiable, and satisfies the ordinary differential inequality
\beq\label{fte6}
\begin{cases}
\displaystyle U'(t)+\frac{C_1}{\gamma}|\Omega|^{-\frac{\lambda_2}{2N}}U^\frac{p}{2}(t) \le 0 & \text{a.e.\ in $(0,T)$} \\
U(0) = \|u_0\|_{L^2(\Omega)}^2.
\end{cases}
\eeq
In turn, integrating on \eqref{fte6}, we get for all $t\in(0,T)$
\begin{align*}
U^\frac{2-p}{2}(t)-U^\frac{2-p}{2}(0) &= \frac{2-p}{2}\int_0^t U^{-\frac{p}{2}}(\tau)U'(\tau)\,d\tau \\
&\le -\frac{2-p}{2}\frac{C_1}{\gamma}|\Omega|^{-\frac{\lambda_2}{2N}}\int_0^t 1\,d\tau = -\frac{C_1 t}{\gamma_*}|\Omega|^{-\frac{\lambda_2}{2N}},
\end{align*}
with $\gamma_*>0$ depending on $N,p,s$. The latter inequality rephrases as
\[\|u(\cdot,t)\|_{L^2(\Omega)}^{2-p} \le \|u_0\|_{L^2(\Omega)}^{2-p}-\frac{C_1 t}{\gamma_*}|\Omega|^{-\frac{\lambda_2}{2N}}.\]
Finally, we have $u(\cdot,t)=0$ in $\Omega$ as soon as
\[t \ge \frac{\gamma_*}{C_1}\|u_0\|_{L^2(\Omega)}^{2-p}|\Omega|^\frac{\lambda_2}{2N},\]
which yields the conclusion since $u$ vanishes in $\Omega^c$ at any time. \qed
\vskip4pt
\noindent
Finally, we estimate the rate of extinction of $u$ as time approaches $T_*$.
\vskip4pt
\noindent
{\em Proof of Theorem \ref{dec}.} We may see $u$ as a non-negative, local weak solution of \eqref{eq} in $\Omega_{T_*}$, which in addition satisfies $u(\cdot,t)=0$ in $\R^N$ for all $t\ge T_*$. Up to a time translation, we apply Corollary \ref{pos} \ref{pos1} to $u$ in the time interval $(t,T_*)$. Therefore, there exists $\gamma>0$ depending on the data s.t.\
\begin{align*}
\int_{B_\rho(x_0)}u(x,t)\,dx &\le \sup_{t<\tau<T_*}\,\int_{B_\rho(x_0)}u(x,\tau)\,dx \\
&\le \gamma\Big[\inf_{t<\tau<T_*}\,\int_{B_{2\rho}(x_0)}u(x,\tau)\,dx\Big]+\gamma\Big(\frac{T_*-t}{\rho^{\lambda_1}}\Big)^\frac{1}{2-p} \\
&= \gamma\Big(\frac{T_*-t}{\rho^{\lambda_1}}\Big)^\frac{1}{2-p},
\end{align*}
which proves \ref{dec1}.
\vskip2pt
\noindent
Now assume $p_*<p<2$, in particular $\lambda_1>0$. Set
\[P = \max\Big\{1,\,\Big(\frac{T_*-t}{\rho^{ps}}\Big)^\frac{1-p}{2-p}{\rm Tail}\Big(u,x_0,\frac{\rho}{2},t,T_*\Big)^{p-1}\Big\} > 0.\]
By Corollary \ref{pos} \ref{pos2}, again applied in the time interval $(t,T_*)$, and Theorem \ref{fte}, there exists $\gamma>0$ depending on the data s.t.\
\begin{align*}
\sup_{B_{\rho/2}(x_0)}\,u\Big(\cdot,\frac{t+T_*}{2}\Big) &\le \sup_{B_{\rho/2}(x_0)\times((t+T_*)/2,T_*)}\,u \\
&\le \gamma\Big[\inf_{t<\tau<T_*}\int_{B_{2\rho}(x_0)}u(x,\tau)\,dx\Big]^\frac{ps}{\lambda_1}(T_*-t)^{-\frac{N}{\lambda_1}}+\gamma\Big(\frac{T_*-t}{\rho^{ps}}\Big)^\frac{1}{2-p}P \\
&= \gamma\Big(\frac{T_*-t}{\rho^{ps}}\Big)^\frac{1}{2-p}P,
\end{align*}
which proves \ref{dec2}. Thus, the proof is concluded. \qed

\begin{remark}\label{dec1inf}
Clearly, the estimate of Theorem \ref{dec} \ref{dec2} is sharper than that in \ref{dec1}, up to a rescaling of the cylinder $B_\rho(x_0)\times(t,T_*)$ and under appropriate conditions on the tail (recall that $u$ vanishes in $\Omega^c$). Nevertheless, the fist estimate holds even in the subcritical regime $1<p\le p_*$ and does not require additional assumptions.
\end{remark}

\appendix

\section{Time mollifications}\label{app}

\noindent
At several steps in our proofs, namely in the proofs of Lemma \ref{tst}, Lemma \ref{bl}, and Theorem \ref{fte}, we have introduced test functions that are not admissible according to Definitions \ref{lws} and \ref{scp}, respectively. Such procedure is widely known in parabolic regularity theory, and in our case it is formally justified by the following mollification argument.
\vskip2pt
\noindent
For all $v:\Omega_T\to\R$ and all $h\in(0,1)$, we define two finite convolutions of $v$ with exponential weight functions by setting for all $(x,t)\in\Omega_T$
\[v_h(x,t) = \int_0^t\frac{ v(x,\tau)}{h}e^\frac{\tau-t}{h}\,d\tau, \ v_{\bar h}(x,t) = \int_t^T \frac{v(x,\tau)}{h}e^\frac{t-\tau}{h}\,d\tau.\]
We refer to \cite[Appendix B]{L} and \cite[Lemma 2.2]{KL} for the following properties of $v_h$, $v_{\bar h}$:

\begin{proposition}\label{con}
Let $v\in L^q(\Omega_T)$ for some $q\in[1,+\infty[$. Then, for all $h\in(0,1)$ we have:
\begin{enumroman}
\item\label{con1} $v_h,v_{\bar h}$ are differentiable in $t$ with
\[\frac{\partial v_h}{\partial t}(x,t) = \frac{v(x,t)-v_h(x,t)}{h}, \ \frac{\partial v_{\bar h}(x,t)}{\partial t} = \frac{v_{\bar h}(x,t)-v(x,t)}{h},\]
in particular for all $\varphi\in W^{1,2}(0,T,L^{q'}(\Omega))$ s.t.\ $\varphi(\cdot,0)=\varphi(\cdot,T)=0$ we have
\[\int_0^Tv_h(x,\tau)\varphi_\tau(x,\tau)\,d\tau = -\int_0^T\frac{v(x,\tau)-v_h(x,\tau)}{h}\varphi(x,\tau)\,d\tau,\]
\[\int_0^Tv_{\bar h}(x,\tau)\varphi_\tau(x,\tau)\,d\tau = -\int_0^t\frac{v_{\bar h}(x,\tau)-v(x,\tau)}{h}\varphi(x,\tau)\,d\tau;\]
\item\label{con2} $v_h,v_{\bar h}\in L^q(\Omega_T)$ and $v_h,v_{\bar h}\to v$ in $L^q(\Omega_T)$ as $h\to 0^+$, also if $v\ge 0$ in $\Omega_T$ then $0\le v_h,v_{\bar h}\le v$ in $\Omega_T$;
\item\label{con3} if $v\in L^q(0,T,W^{s,q}(\Omega'))$ for some $\Omega'\Subset\Omega$, then $v_h,v_{\bar h}\in L^q(0,T,W^{s,q}(\Omega'))$ with
\[\int_0^T\iint_{\Omega'\times\Omega'}\frac{|v_h(x,\tau)-v_h(y,\tau)|^q}{|x-y|^{N+qs}}\,dx\,dy\,d\tau \le \|v\|_{L^q(0,T,W^{s,q}(\Omega'))}^q,\]
and an analogous estimate holds for $v_{\bar h}$.
\end{enumroman}
\end{proposition}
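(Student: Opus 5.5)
The statement to prove is Proposition \ref{con}, the standard properties of the exponential time mollifications $v_h$ and $v_{\bar h}$. My plan is to treat $v_h$ in detail; the backward average $v_{\bar h}$ follows by the time reversal $t\mapsto T-t$, which exchanges the two operators.

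For \ref{con1}, I would write $v_h(x,t)=h^{-1}e^{-t/h}\int_0^t v(x,\tau)e^{\tau/h}\,d\tau$. For a.e.\ $x$ the integrand lies in $L^1(0,T)$, so the integral is absolutely continuous in $t$. The product rule together with the Lebesgue differentiation theorem gives $\partial_t v_h=(v-v_h)/h$ a.e. The integration by parts identity then follows from the absolute continuity of $t\mapsto v_h(x,t)\varphi(x,t)$ and the vanishing of $\varphi$ at $t=0,T$. The analogous computation for $v_{\bar h}$ gives $\partial_t v_{\bar h}=(v_{\bar h}-v)/h$.

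For \ref{con2}, I would note that $v_h=v\ast_t k_h$, restricted to $(0,T)$, where $k_h(\sigma)=h^{-1}e^{-\sigma/h}\chi_{(0,\infty)}(\sigma)$ and $v$ is extended by zero. Since $\|k_h\|_{L^1}\le 1$, Young's convolution inequality (or Minkowski's integral inequality) gives $\|v_h\|_{L^q(\Omega_T)}\le\|v\|_{L^q(\Omega_T)}$. Because $k_h$ is an approximate identity, $v_h\to v$ in $L^q$. To prove this, I would first establish it for $v$ continuous with compact support in $\Omega\times(0,T)$, and then pass to general $v$ by density combined with the uniform bound. For the order property, if $v\ge0$ then clearly $v_h\ge0$. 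The upper bound $v_h\le v$ is \emph{false} pointwise in general; the correct statement is the one used in practice, namely $v_h\le\sup v$, i.e.\ $\|v_h\|_\infty\le\|v\|_\infty$. I would prove that version, since $\int_0^t h^{-1}e^{(\tau-t)/h}\,d\tau\le1$.

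For \ref{con3}, linearity gives $v_h(x,\tau)-v_h(y,\tau)=(v(x,\cdot)-v(y,\cdot))_h(\tau)$. I would apply the $L^q$ contraction from \ref{con2} in time to the function $(x,y,\tau)\mapsto (v(x,\tau)-v(y,\tau))|x-y|^{-(N+qs)/q}$ on $\Omega'\times\Omega'\times(0,T)$. Integrating over $(x,y)$ by Fubini then yields the stated seminorm bound.

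The only delicate point is the density argument in \ref{con2}, together with correctly reading the pointwise comparison claim there. Everything else is routine.
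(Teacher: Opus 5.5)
Your proof is correct. It is the standard argument found in the references the paper cites in place of a proof (Kinnunen--Lindqvist, Lemma 2.2, and Liao, Appendix B): differentiate the Volterra-type integral, get the $L^q$-contraction from the unit-mass exponential kernel, use density for $v_h\to v$, use time reversal for $v_{\bar h}$, and for item \ref{con3} apply linearity plus the contraction to $(v(x,\tau)-v(y,\tau))|x-y|^{-(N+qs)/q}$.

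You are also right that the pointwise bound $v_h\le v$ in item \ref{con2} is false. For $v(x,t)=\chi_K(x)\chi_{(0,T/2)}(t)$ with $K\Subset\Omega$, one gets $v_h=e^{-t/h}(e^{T/(2h)}-1)>0=v$ for $x\in K$, $t>T/2$, and symmetrically for $v_{\bar h}$. The corrected bound you prove, $0\le v_h,v_{\bar h}\le\operatorname{ess\,sup}v$, is exactly what the paper actually uses in Lemmas \ref{mol}, \ref{mob} and \ref{mot}.
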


\noindent
The following lemma provides a rigorous proof of inequality \eqref{tst1} in Lemma \ref{tst}:

\begin{lemma}\label{mol}
Let $u$, $\check B\subset\hat B\subset B$, $t$, $\nu$, $\xi$ be as in Lemma \ref{tst}. Then, there exists $\gamma>0$ depending on the data s.t.\
\begin{align*}
0 &\ge -\frac{pt^\frac{1}{p}}{2(p-1)}\int_B (u(x,t)+\nu)^\frac{2(p-1)}{p}\xi^p(x)\,dx+\frac{1}{2(p-1)}\int_0^t \tau^\frac{1-p}{p}\int_B(u(x,\tau)+\nu)^\frac{2(p-1)}{p}\xi^p(x)\,dx\,d\tau \\
\nonumber &- \gamma\int_0^t \tau^\frac{1}{p}\iint_{\R^N\times\R^N}\frac{|u(x,\tau)-u(y,\tau)|^{p-2}(u(x,\tau)-u(y,\tau))}{|x-y|^{N+ps}}\,\Big[(u(x,\tau)+\nu)^\frac{p-2}{p}\xi^p(x)-(u(y,\tau)+\nu)^\frac{p-2}{p}\xi^p(y)\Big]\,dx\,dy\,d\tau.
\end{align*}
\end{lemma}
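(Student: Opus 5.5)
We justify the formal testing in the weak formulation by a Steklov/exponential time-mollification in the spirit of \cite{KL} and \cite[Appendix B]{L}, and then pass to the limit $h\to0^+$.

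The first step is to choose the test function. The natural candidate is $\varphi=-\tau^{1/p}\big((u+\nu)^{\frac{p-2}{p}}\big)_{\bar h}\xi^p$, or more conveniently $\varphi=-\tau^{1/p}(u_h+\nu)^{\frac{p-2}{p}}\xi^p$ (suitably cut off near $t$ by a Lipschitz function of time $\zeta_\epsilon$ that equals $1$ on $(0,t-\epsilon)$ and vanishes after $t$). This function lies in $W^{1,2}(0,t;L^2(\hat B))$ by Proposition \ref{con}\ref{con1}. Since $\nu>0$, the map $w\mapsto(w+\nu)^{\frac{p-2}{p}}$ is Lipschitz and bounded on $w\ge0$ (recall $u\ge0$ on $\operatorname{supp}\xi$), so $\varphi\in L^p(0,t;W^{s,p}_0(\hat B))$.

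The second step is the time term. In the weak formulation we rewrite $\int u\,\varphi_\tau$ by replacing $u$ with $u_h$, up to the standard error, using the identity $\partial_\tau u_h=(u-u_h)/h$. The key observation is monotonicity: since $g(w)=(w+\nu)^{\frac{p-2}{p}}$ is decreasing and $G(w)=\int_0^w g$, we have
\[(u-u_h)\big(g(u_h)-g(u)\big)\le0 .\]
This shows that the mollified time term is bounded from below by the expression $-\int\tau^{1/p}\partial_\tau G(u_h)\,\xi^p$ with the appropriate sign, where $G(w)=\frac{p}{2(p-1)}(w+\nu)^{\frac{2(p-1)}{p}}$ up to an additive constant. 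Integrating by parts in $\tau$ then produces the boundary term at $t$, namely $-\frac{p t^{1/p}}{2(p-1)}\int(u_h+\nu)^{\frac{2(p-1)}{p}}\xi^p$. The boundary term at $0$ vanishes because of the factor $\tau^{1/p}$. The integration by parts also produces the term $\frac{1}{2(p-1)}\int\tau^{\frac{1-p}{p}}\int(u_h+\nu)^{\frac{2(p-1)}{p}}\xi^p$, which is nonnegative and integrable since $\frac{1-p}{p}>-1$.

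The third step is the diffusion term. Here we pass $h\to0$ using Proposition \ref{con}\ref{con2}--\ref{con3}: $u_h\to u$ in $L^p(W^{s,p})$, and the composition with $g$ converges by the Lipschitz bound and dominated convergence. The nonlocal part outside $\hat B$ is controlled by the tail condition \ref{lws2} together with the boundedness of $g$. The bounds $(K_2)$ allow us to replace $K$ by $|x-y|^{-N-ps}$ at the cost of a constant $\gamma$. Finally, we let $\epsilon\to0$ in the time cutoff, using $u\in C(0,T;L^2_{\rm loc})$ to recover the values at time $t$.

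We expect the main obstacle to be this convergence of the diffusion term with the weight $\tau^{1/p}$ and the merely bounded kernel. The point is that no information on $u_t$ is available, so all compactness has to come from the $L^p(W^{s,p})$ convergence of $u_h$. The sign argument in the time term is the crux that replaces the missing time derivative.
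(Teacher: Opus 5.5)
Your overall scheme (mollify in time, test, then let $h\to0$ and $\epsilon\to0$) matches the paper's. However, the step you yourself call the crux fails, because you mollify in the wrong time direction. Take $g(w)=(w+\nu)^{\frac{p-2}{p}}$, $\varphi=-\tau^{1/p}\zeta_\epsilon\,g(u_h)\,\xi^p$ and $\partial_\tau u_h=(u-u_h)/h$, and split
\[
-\int_0^t\int_{\hat B}u\,\varphi_\tau\,dx\,d\tau=-\int_0^t\int_{\hat B}u_h\,\varphi_\tau\,dx\,d\tau-\int_0^t\int_{\hat B}(u-u_h)\,\varphi_\tau\,dx\,d\tau .
\]
The first integral, after integrating by parts in $\tau$, gives in the limit exactly the two time terms of the lemma. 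The second integral, apart from a piece that vanishes as $h\to0$, equals
\[
\int_0^t\int_{\hat B}\tau^{\frac1p}\zeta_\epsilon\,\xi^p\,g'(u_h)\,\frac{(u-u_h)^2}{h}\,dx\,d\tau\le 0 ,
\]
since $g'<0$. Without information on $u_t$ there is no reason for this term to vanish. So you only get an \emph{upper} bound on the time term $H_1$, whereas from $0=H_1+H_2$ you need a \emph{lower} bound. Consistently with this, the inequality you quote has the wrong sign: for decreasing $g$ one has $(u-u_h)(g(u_h)-g(u))\ge0$.

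The paper avoids this by building the test function on the backward average, $-(u_{\bar h}+\nu)^{\frac{p-2}{p}}\xi^p\tau^{1/p}\psi_\eps$, where $\partial_\tau u_{\bar h}=(u_{\bar h}-u)/h$. The same term then becomes
\[
\frac{2-p}{p}\int_0^t\int_{\hat B}\frac{(u_{\bar h}-u)^2}{h}\,(u_{\bar h}+\nu)^{-\frac2p}\,\tau^{\frac1p}\psi_\eps\,\xi^p\,dx\,d\tau\ge 0
\]
and can simply be discarded. The general rule: to bound the time term from below in the unmollified weak formulation, a test function increasing in $u$ (here $-g(u)$) must be composed with the backward average. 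It also pays to average only over $(\tau,t)$, so that only times where $u\ge0$ in $\hat B$ enter and $(u_{\bar h}+\nu)^{\frac{p-2}{p}}$ is well defined.

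Your monotonicity inequality, with the corrected sign, is the right tool only in a different route. There you mollify the equation itself with the forward average and test with $-\tau^{1/p}g(u)\xi^p$, which needs no time derivative. That route, however, requires a mollified weak formulation of the nonlocal equation, which your proposal does not set up.

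Your treatment of the diffusion term is acceptable. The paper instead uses weak convergence, from a uniform $L^p$ bound on the test-function increments, rather than strong convergence of the compositions.

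A minor point you share with the paper: the integrand $|u(x,\tau)-u(y,\tau)|^{p-2}(u(x,\tau)-u(y,\tau))(\varphi(x,\tau)-\varphi(y,\tau))$ has no sign, so $(K_2)$ alone does not justify replacing $K$ by $\gamma|x-y|^{-N-ps}$. It is cleaner to keep $K$ and invoke $(K_2)$ only after splitting into signed pieces, as in the proof of Lemma \ref{tst}.
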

\begin{proof}
Fix $\eps\in(0,t/2)$ and set for all $\tau\in[0,T]$
\[\psi_\eps(\tau) = \begin{cases}
\tau/\eps & \text{if $0\le\tau<\eps$} \\
1 & \text{if $\eps\le\tau<t-\eps$} \\
(t-\tau)/\eps & \text{if $t-\eps\le\tau<t$} \\
0 & \text{if $t\le\tau\le T$,}
\end{cases}\]
so $\psi_\eps:[0,T]\to[0,1]$ is a Lipschitz mapping. Also, fix $h\in(0,1)$ and set for all $(x,\tau)\in\R^N\times(0,T)$
\[\varphi^\eps_h(x,\tau) = -(u_{\bar h}(x,\tau)+\nu)^\frac{p-2}{p}\xi^p(x)\tau^\frac{1}{p}\psi_\eps(\tau).\]
Note that the negative exponent in the first term does not produce any singularity as $u(\cdot,\tau)\ge 0$ in $\hat B$. By Proposition \ref{con}, $\varphi^\eps_h\in W^{1,2}_{\rm loc}(0,T,L^2(\hat B))\cap L^p_{\rm loc}(0,T,W^{s,p}_0(\hat B))$ is a suitable test function for \eqref{eq}, in $\hat B\times(0,t)$ (see Definition \ref{lws}), so we have
\begin{align}\label{mol1}
0 &= -\int_0^t\int_{\hat B} u(x,\tau)\frac{\partial\varphi^\eps_h}{\partial\tau}(x,\tau)\,dx\,d\tau \\
\nonumber &+ \int_0^t\iint_{\R^N\times\R^N}|u(x,\tau)-u(y,\tau)|^{p-2}(u(x,\tau)-u(y,\tau))(\varphi^\eps_h(x,\tau)-\varphi^\eps_h(y,\tau))K(x,y,\tau)\,dx\,dy\,d\tau \\
\nonumber &= H_1+H_2,
\end{align}
where we have also used that $\varphi^\eps_h(\cdot,0)=\varphi^\eps_h(\cdot,t)=0$ in $\R^N$ and $\xi=0$ in $\hat B^c$. Our aim is now to estimate both $H_1$, $H_2$ from below, and then pass to the limit as $\eps,h\to 0$. We first focus on the evolutive term $H_1$:
\begin{align*}
H_1 &= -\int_0^T\int_{\hat B} u_{\bar h}(x,\tau)\frac{\partial\varphi^\eps_h}{\partial\tau}(x,\tau)\,dx\,d\tau+\int_0^T\int_{\hat B}(u_{\bar h}(x,\tau)-u(x,\tau))\frac{\partial\varphi^\eps_h}{\partial\tau}(x,\tau)\,dx\,d\tau \\
&= H_3+H_4.
\end{align*}
On $H_3$ we first apply Proposition \ref{con} \ref{con1}, then the chain rule of differentiation and the definition of $\psi_\eps:$
\begin{align*}
H_3 &= \int_0^T\int_{\hat B}\frac{\partial u_{\bar h}}{\partial\tau}(x,\tau)\varphi_h^\eps(x,\tau)\,dx\,d\tau \\
&= -\int_0^T\int_{\hat B}\frac{\partial}{\partial\tau}(u_{\bar h}(x,\tau)+\nu)(u_{\bar h}(x,\tau)+\nu)^\frac{p-2}{p}\xi^p(x)\tau^\frac{1}{p}\psi_\eps(\tau)\,dx\,d\tau \\
&= -\frac{p}{2(p-1)}\int_0^T\int_{\hat B}\frac{\partial}{\partial\tau}(u_{\bar h}(x,\tau)+\nu)^\frac{2(p-1)}{p}\xi^p(x)\tau^\frac{1}{p}\psi_\eps(\tau)\,dx\,d\tau \\
&= \frac{p}{2(p-1)}\int_0^T\int_{\hat B}(u_{\bar h}(x,\tau)+\nu)^\frac{2(p-1)}{p}\xi^p(x)\frac{\partial}{\partial\tau}(\tau^\frac{1}{p}\psi_\eps(\tau))\,dx\,d\tau \\
&= \frac{1}{2(p-1)}\int_0^T\int_{\hat B}(u_{\bar h}(x,\tau)+\nu)^\frac{2(p-1)}{p}\xi^p(x)\tau^\frac{1-p}{p}\psi_\eps(\tau)\,dx\,d\tau \\
&+ \frac{p}{2(p-1)}\int_0^T\int_{\hat B}(u_{\bar h}(x,\tau)+\nu)^\frac{2(p-1)}{p}\xi^p(x)\tau^\frac{1}{p}\psi'_\eps(\tau)\,dx\,d\tau \\
&= \frac{1}{2(p-1)}\int_0^t\int_{\hat B}(u_{\bar h}(x,\tau)+\nu)^\frac{2(p-1)}{p}\xi^p(x)\tau^\frac{1-p}{p}\psi_\eps(\tau)\,dx\,d\tau \\
&+ \frac{p}{2(p-1)}\Big[\frac{1}{\eps}\int_0^\eps\tau^\frac{1}{p}\int_{\hat B}(u_{\bar h}(x,\tau)+\nu)^\frac{2(p-1)}{p}\xi^p(x)\,dx\,d\tau-\frac{1}{\eps}\int_{t-\eps}^t\tau^\frac{1}{p}\int_{\hat B}(u_{\bar h}(x,\tau)+\nu)^\frac{2(p-1)}{p}\xi^p(x)\,dx\,d\tau\Big].
\end{align*}
First let $h\to 0$ and use Proposition \ref{con} \ref{con2} with $q=1$ (note that $2(p-1)/p<1$):
\begin{align*}
\lim_{h\to 0}\,H_3 &= \frac{1}{2(p-1)}\int_0^t\int_{\hat B}(u(x,\tau)+\nu)^\frac{2(p-1)}{p}\xi^p(x)\tau^\frac{1-p}{p}\psi_\eps(\tau)\,dx\,d\tau \\
&+ \frac{p}{2(p-1)}\Big[\frac{1}{\eps}\int_0^\eps\tau^\frac{1}{p}\int_{\hat B}(u(x,\tau)+\nu)^\frac{2(p-1)}{p}\xi^p(x)\,dx\,d\tau-\frac{1}{\eps}\int_{t-\eps}^t\tau^\frac{1}{p}\int_{\hat B}(u(x,\tau)+\nu)^\frac{2(p-1)}{p}\xi^p(x)\,dx\,d\tau\Big].
\end{align*}
Further, let $\eps\to 0$ and apply the fundamental theorem of calculus:
\begin{align*}
\lim_{\eps,h\to 0}\,H_3 &= \frac{1}{2(p-1)}\int_0^t\int_{\hat B}(u(x,\tau)+\nu)^\frac{2(p-1)}{p}\xi^p(x)\tau^\frac{1-p}{p}\,dx\,d\tau \\
&- \frac{pt^\frac{1}{p}}{2(p-1)}\int_{\hat B}(u(x,t)+\nu)^\frac{2(p-1)}{p}\xi^p(x)\,dx.
\end{align*}
Next, on $H_4$ we apply Proposition \ref{con} \ref{con1} and positivity of $u_{\bar h}$:
\begin{align*}
H_4 &= \frac{2-p}{p}\int_0^T\int_{\hat B}(u_{\bar h}(x,\tau)-u(x,\tau))(u_{\bar h}(x,\tau)+\nu)^{-\frac{2}{p}}\frac{\partial u_{\bar h}}{\partial\tau}(x,\tau)\tau^\frac{1}{p}\psi_\eps(\tau)\xi^p(x)\,dx\,d\tau \\
&- \int_0^T\int_{\hat B}(u_{\bar h}(x,\tau)-u(x,\tau))(u_{\bar h}(x,\tau)+\nu)^\frac{p-2}{p}\frac{\partial}{\partial\tau}(\tau^\frac{1}{p}\psi_\eps(\tau))\xi^p(x)\,dx\,d\tau \\
&\ge \frac{2-p}{p}\int_0^T\int_{\hat B}\frac{(u_{\bar h}(x,\tau)-u(x,\tau))^2}{h}(u_{\bar h}(x,\tau)+\nu)^{-\frac{2}{p}}\tau^\frac{1}{p}\psi_\eps(\tau)\xi^p(x)\,dx\,d\tau \\
&- \int_0^T\int_{\hat B}|u_{\bar h}(x,\tau)-u(x,\tau)|\nu^\frac{p-2}{p}\Big|\frac{\partial}{\partial\tau}(\tau^\frac{1}{p}\psi_\eps(\tau))\Big|\xi^p(x)\,dx\,d\tau.
\end{align*}
The first term above is non-negative, while the second tends to $0$ as $h\to 0$, uniformly for all $\eps\in(0,t/2)$, by Proposition \ref{con} \ref{con2} (with $q=1$), so we have
\[\liminf_{\eps,h\to 0}\, H_4 \ge 0.\]
By the previous estimates, and recalling that $\xi=0$ in $\hat B^c$, we have
\begin{align}\label{mol2}
\liminf_{\eps,h\to 0}\,H_1 &\ge \frac{1}{2(p-1)}\int_0^t\tau^\frac{1-p}{p}\int_B (u(x,\tau)+\nu)^\frac{2(p-1)}{p}\xi^p(x)\,dx\,d\tau \\
\nonumber &- \frac{pt^\frac{1}{p}}{2(p-1)}\int_B (u(x,t)+\nu)^\frac{2(p-1)}{p}\xi^p(x)\,dx.
\end{align}
We now turn to the diffusive term $H_2$. For all $(x,y,\tau)\in\R^N\times\R^N\times(0,T)$ set
\[G(x,y,\tau) = |u(x,\tau)-u(y,\tau)|^{p-2}(u(x,\tau)-u(y,\tau)),\]
\[F_h(x,y,\tau) =\tau^\frac{1}{p}\Big[(u_{\bar h}(x,\tau)+\nu)^\frac{p-2}{p}\xi^p(x)-(u_{\bar h}(y,\tau)+\nu)^\frac{p-2}{p}\xi^p(y)\Big],\]
\[F(x,y,\tau) =\tau^\frac{1}{p}\Big[(u(x,\tau)+\nu)^\frac{p-2}{p}\xi^p(x)-(u(y,\tau)+\nu)^\frac{p-2}{p}\xi^p(y)\Big].\]
First note that
\beq\label{mol3}
\int_0^t\iint_{B\times B}\frac{|G(x,y,\tau)|^\frac{p}{p-1}}{|x-y|^{N+ps}}\,dx\,dy\,d\tau \le \|u\|_{L^p(0,t,W^{s,p}(B))}^p.
\eeq
By Proposition \ref{con} \ref{con2} we have for a.e.\ $(x,y,\tau)\in\R^N\times\R^N\times(0,T)$
\[\lim_{h\to 0}\,F_h(x,y,\tau) = F(x,y,\tau).\]
We first note that, letting $\eps\to 0$ and using the properties of the kernel $K$, we have
\begin{align*}
\lim_{\eps\to 0}\,H_2 &= -\int_0^t\iint_{\R^N\times\R^N}G(x,y,\tau)F_h(x,y,\tau)K(x,y,\tau)\,dx\,dy\,d\tau \\
&\ge -\gamma\int_0^t\iint_{\R^N\times\R^N}\frac{G(x,y,\tau)F_h(x,y,\tau)}{|x-y|^{N+ps}}\,dx\,dy\,d\tau.
\end{align*}
We now claim that
\beq\label{mol4}
\lim_{h\to 0}\,\int_0^t\iint_{\R^N\times\R^N}\frac{G(x,y,\tau)F_h(x,y,\tau)}{|x-y|^{N+ps}}\,dx\,dy\,d\tau = \int_0^t\iint_{\R^N\times\R^N}\frac{G(x,y,\tau)F(x,y,\tau)}{|x-y|^{N+ps}}\,dx\,dy\,d\tau.
\eeq
Exploiting symmetry and recalling that $\xi=0$ in $\hat B^c$, we can split the difference as follows:
\begin{align*}
& \int_0^t\iint_{\R^N\times\R^N}\frac{G(x,y,\tau)}{|x-y|^{N+ps}}\big[F_h(x,y,\tau)-F(x,y,\tau)\big]\,dx\,dy\,d\tau \\
&= \int_0^t\iint_{B\times B}\frac{G(x,y,\tau)}{|x-y|^{N+ps}}\big[F_h(x,y,\tau)-F(x,y,\tau)\big]\,dx\,dy\,d\tau \\
&+ 2\int_0^t \tau^\frac{1}{p}\iint_{\hat B\times B^c}\frac{G(x,y,\tau)}{|x-y|^{N+ps}}\big[(u_{\bar h}(x,\tau)+\nu)^\frac{p-2}{p}-(u(x,\tau)+\nu)^\frac{p-2}{p}\big]\xi^p(x)\,dx\,dy\,d\tau \\
&= H_5+H_6.
\end{align*}
We show separately that both $H_5$, $H_6$ tend to $0$ as $h\to 0$, up to a subsequence. First we deal with $H_5$. By subadditivity and $0\le\xi\le 1$ in $\R^N$ we have
\begin{align*}
\int_0^t\iint_{B\times B}\frac{|F_h(x,y,\tau)|^p}{|x-y|^{N+ps}}\,dx\,dy\,d\tau &\le \gamma\int_0^t\tau\iint_{B\times B}\frac{\big|(u_{\bar h}(x,\tau)+\nu)^\frac{p-2}{p}-(u_{\bar h}(y,\tau)+\nu)^\frac{p-2}{p}\big|^p}{|x-y|^{N+ps}}\,dx\,dy\,d\tau \\
&+ \gamma\int_0^t\tau\iint_{B\times B}(u_{\bar h}(y,\tau)+\nu)^{p-2}\frac{|\xi^p(x)-\xi^p(y)|^p}{|x-y|^{N+ps}}\,dx\,dy\,d\tau = H_7+H_8.
\end{align*}
To estimate $H_7$ we apply Lagrange's rule, inequality $\tau\le t$, and positivity of $u_{\bar h}$, then Proposition \ref{con} \ref{con3}:
\begin{align*}
H_7 &\le \gamma t\int_0^t\iint_{B\times B}\max\big\{(u_{\bar h}(x,\tau)+\nu)^{-\frac{2}{p}},\,(u_{\bar h}(x,\tau)+\nu)^{-\frac{2}{p}}\big\}^p\frac{|u_{\bar h}(x,\tau)-u_{\bar h}(x,\tau)|^p}{|x-y|^{N+ps}}\,dx\,dy\,d\tau \\
&\le \frac{\gamma t}{\nu^2}\int_0^t\iint_{B\times B}\frac{|u_{\bar h}(x,\tau)-u_{\bar h}(x,\tau)|^p}{|x-y|^{N+ps}}\,dx\,dy\,d\tau \\
&\le \frac{\gamma t}{\nu^2}\|u\|_{L^p(0,t,W^{s,p}(B))}^p.
\end{align*}
For $H_8$ we argue similarly, using Lagrange's rule on the increment of $\xi^p$ and Proposition \ref{con} \ref{con2} (taking this time $q=p$):
\begin{align*}
H_8 &\le \gamma t\int_0^t\iint_{B\times B}(u_{\bar h}(y,\tau)+\nu)^{p-2}\max\big\{\xi^{p-1}(x),\,\xi^{p-1}(y)\big\}^p\frac{|\xi(x)-\xi(y)|^p}{|x-y|^{N+ps}}\,dx\,dy\,d\tau \\
&\le \frac{\gamma t}{\nu^2\rho^p}\Big[\int_0^t\int_B(u_{\bar h}(y,\tau)+\nu)^p\,dy\,d\tau\Big]\Big[\sup_{y\in B}\,\int_B\frac{dx}{|x-y|^{N+ps-p}}\Big] \\
&\le \frac{\gamma t}{\nu^2\rho^p}\Big[\int_0^t\int_B u_{\bar h}^p(y,\tau)\,dy\,d\tau+\nu^p\rho^Nt\Big]\Big[\int_{B_{2\rho}(0)}\frac{dz}{|z|^{N+ps-p}}\Big] \\
&\le \frac{\gamma t}{\nu^2\rho^{ps}}\Big[\|u\|_{L^p(B\times(0,t))}^p+\nu^p \rho^N t\Big].
\end{align*}
Thus, we have found a constant $\gamma>0$ depending on $u$, $\nu$, $\rho$, and $t$, but not on $h$, s.t.\ for all $h\in(0,1)$
\[\int_0^t\iint_{B\times B}\frac{|F_h(x,y,\tau)|^p}{|x-y|^{N+ps}}\,dx\,dy\,d\tau \le \gamma.\]
A standard argument based on reflexivity, and exploiting \eqref{mol3}, shows that up to passing to a sequence of $h$'s converging to $0$, we have
\beq\label{mol5}
\lim_{h\to 0}\, H_5 = 0.
\eeq
Next we focus on $H_6$. First note that for all $x\in\hat B$, $y\in B^c$
\[|x-y| \ge |y-x_0|-|x-x_0| \ge (1-\sigma')|y-x_0|.\]
Recalling that $0\le\xi\le 1$, we estimate $H_6$ as follows:
\begin{align*}
|H_6| &\le \frac{\gamma t^\frac{1}{p}}{(1-\sigma')^{N+ps}}\int_0^t\iint_{\hat B\times B^c}\frac{u^{p-1}(x,\tau)}{|y-x_0|^{N+ps}}\Big|(u_{\bar h}(x,\tau)+\nu)^\frac{p-2}{p}-(u(x,\tau)+\nu)^\frac{p-2}{p}\Big|\,dx\,dy\,d\tau \\
&+ \frac{\gamma t^\frac{1}{p}}{(1-\sigma')^{N+ps}}\int_0^t\iint_{\hat B\times B^c}\frac{|u(y,\tau)|^{p-1}}{|y-x_0|^{N+ps}}\Big|(u_{\bar h}(x,\tau)+\nu)^\frac{p-2}{p}-(u(x,\tau)+\nu)^\frac{p-2}{p}\Big|\,dx\,dy\,d\tau \\
&= \frac{\gamma t^\frac{1}{p}}{(1-\sigma')^{N+ps}}(H_9+H_{10}).
\end{align*}
We deal with $H_9$ by H\"older's inequality and Lagrange's rule:
\begin{align*}
H_9 &\le \Big[\int_0^t\int_{\hat B}u^p(x,\tau)\,dx\,d\tau\Big]^\frac{p-1}{p}\Big[\int_0^t\int_{\hat B}\Big|(u_{\bar h}(x,\tau)+\nu)^\frac{p-2}{p}-(u(x,\tau)+\nu)^\frac{p-2}{p}\Big|^p\,dx\,d\tau\Big]^\frac{1}{p}\int_{B^c}\frac{dy}{|y-x_0|^{N+ps}} \\
&\le \frac{\gamma}{\rho^{ps}}\|u\|_{L^p(\hat B\times(0,t))}^{p-1}\Big[\int_0^t\int_{\hat B}\max\big\{(u_{\bar h}(x,\tau)+\nu)^{-\frac{2}{p}},\,(u(x,\tau)+\nu)^{-\frac{2}{p}}\big\}^p|u_{\bar h}(x,\tau)-u(x,\tau)|^p\,dx\,d\tau\Big]^\frac{1}{p} \\
&\le \frac{\gamma}{\nu^\frac{2}{p}\rho^{ps}}\|u\|_{L^p(\hat B\times(0,t))}^{p-1}\|u_{\bar h}-u\|_{L^1(\hat B\times(0,t))}.
\end{align*}
The first multiplier above is bounded, while the second tends to $0$ as $h\to 0$ by Proposition \ref{con} \ref{con2} (with $q=1$). So we have
\[\lim_{h\to 0}\,H_9 = 0.\]
For $H_{10}$ we use a different approach, separating the space integration variables. For all $\tau\in(0,t)$ we have
\begin{align*}
& \iint_{\hat B\times B^c}\frac{|u(y,\tau)|^{p-1}}{|y-x_0|^{N+ps}}\Big|(u_{\bar h}(x,\tau)+\nu)^\frac{p-2}{p}-(u(x,\tau)+\nu)^\frac{p-2}{p}\Big|\,dx\,dy \\
&\le \gamma\Big[\int_{\hat B}\max\big\{(u_{\bar h}(x,\tau)+\nu)^{-\frac{2}{p}},\,(u(x,\tau)+\nu)^{-\frac{2}{p}}\big\}|u_{\bar h}(x,\tau)-u(x,\tau)|\,dx\Big]\int_{B^c}\frac{|u(y,\tau)|^{p-1}}{|y-x_0|^{N+ps}}\,dy \\
&\le \frac{\gamma}{\nu^\frac{2}{p}\rho^{ps}}\Big[\int_{\hat B}|u_{\bar h}(x,\tau)-u(x,\tau)|\,dx\Big]\Big[\rho^{ps}\int_{B^c}\frac{|u(y,\tau)|^{p-1}}{|y-x_0|^{N+ps}}\,dy\Big].
\end{align*}
Now integrate in time and recall \eqref{tail}:
\[H_{10} \le \frac{\gamma}{\nu^\frac{2}{p}\rho^{ps}}\|u_{\bar h}-u\|_{L^1(\hat B\times(0,t))}{\rm Tail}(u,x_0,\rho,0,t)^{p-1}.\]
As soon as $h\to 0$, the first term tends to $0$ by Proposition \ref{con} \ref{con2} ($q=1$) and the second is bounded, hence
\[\lim_{h\to 0}\,H_{10} = 0.\]
Therefore, we have
\beq\label{mol6}
\lim_{h\to 0}\,H_6 = 0.
\eeq
Now \eqref{mol5} and \eqref{mol6} imply \eqref{mol4}. This, in turn, leads to
\beq\label{mol7}
\limsup_{\eps,h\to 0}\,H_2 \ge - \gamma\int_0^t\iint_{\R^N\times\R^N}\frac{G(x,y,\tau)F(x,y,\tau)}{|x-y|^{N+ps}}\,dx\,dy\,d\tau.
\eeq
Finally, we use both \eqref{mol2} and \eqref{mol7} and pass to the limit in \eqref{mol1} to conclude.
\end{proof}

\noindent
Next we give a proof of inequality \eqref{bl1} in Lemma \ref{bl}:

\begin{lemma}\label{mob}
Let $u$, $r$, $t_0$, $\sigma$, $\xi$, $B$, $\tilde B$, $\hat B$ be as in Lemma \ref{bl}. Then, there exists $\gamma>1$ depending on the data and $r$, s.t.\
\begin{align*}
0 &\ge \frac{1}{r}\int_{\tilde B}u^r(x,\tau)\,dx\Big|_0^{t_0} \\
&+ \frac{1}{\gamma}\int_0^{t_0}\iint_{\R^N\times\R^N}\frac{|u(x,\tau)-u(y,\tau)|^{p-2}(u(x,\tau)-u(y,\tau))}{|x-y|^{N+ps}}\Big[u^{r-1}(x,\tau)\xi^p(x)-u^{r-1}(y,\tau)\xi^p(y)\Big]\,dx\,dy\,d\tau.
\end{align*}
\end{lemma}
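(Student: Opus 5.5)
We will follow the scheme of Lemma \ref{mol}, adapted to the test function $u^{r-1}\xi^p$. Fix $\eps\in(0,t_0/2)$ and let $\psi_\eps$ be the piecewise linear cutoff in time from the proof of Lemma \ref{mol}, with $t$ replaced by $t_0$. To handle a possible singularity of $u^{r-2}$ when $1<r<2$, and to obtain a Lipschitz composition, we introduce a small level $\delta>0$. We then test Definition \ref{lws} in $\hat B\times(0,t_0)$ with
\[
\varphi^{\eps}_{h,\delta}(x,\tau)=\big[(u_{h}(x,\tau)+\delta)^{r-1}-\delta^{r-1}\big]\xi^p(x)\psi_\eps(\tau).
\]
Here the forward mollification $u_h$ is the natural choice, because the time derivative will be paired with $u$ to produce $\partial_\tau$ of an $r$-power. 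Since $u$ is locally bounded and non-negative in $\hat B\times(0,t)$, the function $s\mapsto (s+\delta)^{r-1}$ is Lipschitz on the range of $u_h$. Hence, by Proposition \ref{con}, $\varphi^{\eps}_{h,\delta}$ belongs to $W^{1,2}(0,t_0,L^2(\hat B))\cap L^p(0,t_0,W^{s,p}_0(\hat B))$ and vanishes at $\tau=0$ and $\tau=t_0$.

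For the evolutive term we write $u=u_h+(u-u_h)$. The $u_h$ part is an exact derivative:
\[
-\int u_h\,\partial_\tau\varphi=\int \partial_\tau u_h\,\varphi=\int \partial_\tau G_\delta(u_h)\,\xi^p\psi_\eps ,
\]
where $G_\delta$ is a primitive of $(s+\delta)^{r-1}-\delta^{r-1}$. Integrating by parts in $\tau$ moves the derivative onto $\psi_\eps$, giving averages over $(0,\eps)$ and $(t_0-\eps,t_0)$. The remainder is handled with $\partial_\tau u_h=(u-u_h)/h$ from Proposition \ref{con} \ref{con1}. Its main piece is
\[
\int (u-u_h)\,\partial_\tau\big[(u_h+\delta)^{r-1}\big]\xi^p\psi_\eps=\frac{r-1}{h}\int (u-u_h)^2(u_h+\delta)^{r-2}\xi^p\psi_\eps\ge0,
\]
and the term carrying $\psi_\eps'$ tends to $0$ as $h\to0$ by Proposition \ref{con} \ref{con2}. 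The sign of this main piece is favourable, which is why we choose $u_h$ rather than $u_{\bar h}$. Letting $h\to0$ and then $\eps\to0$, and using $u\in C(0,T,L^2_{\rm loc})$ together with local boundedness, we obtain
\[
\int_{\tilde B} G_\delta(u)\xi^p\,dx\Big|_0^{t_0}.
\]
Finally, as $\delta\to0$ we have $G_\delta(u)\to u^r/r$ by monotone or dominated convergence.

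For the diffusive term we must pass to the limit in
\[
\iint G(x,y,\tau)\big[\Phi_{h,\delta}(x,\tau)-\Phi_{h,\delta}(y,\tau)\big]K\,dx\,dy\,d\tau,
\qquad \Phi_{h,\delta}=\big[(u_h+\delta)^{r-1}-\delta^{r-1}\big]\xi^p .
\]
We split the domain into $B\times B$ and $\hat B\times B^c$, as for $H_5$ and $H_6$ in Lemma \ref{mol}.

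On $B\times B$ we use a uniform $W^{s,p}$ bound on $\Phi_{h,\delta}$. It follows from the Lipschitz bound $(r-1)(M+\delta)^{r-2}$ when $r\ge2$, with $M=\sup u$, and from $(r-1)\delta^{r-2}$ when $r<2$, combined with Proposition \ref{con} \ref{con3}. Weak compactness together with the $L^{p'}$ integrability \eqref{mol3} of $G/|x-y|^{(N+ps)/p'}$ then gives convergence as $h\to0$.

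On $\hat B\times B^c$ we control the tail factor by Definition \ref{lws} \ref{lws2}, and use $|\Phi_{h,\delta}-\Phi_\delta|\le L_\delta|u_h-u|$ with $u_h\to u$ in $L^1$.

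The main obstacle is the subsequent limit $\delta\to0$ in the diffusive term when $r<2$, since the Lipschitz constant $\delta^{r-2}$ blows up. Here we would exploit the structure already used in Lemma \ref{bl}: split the integrand into a part of definite sign (where $u(x)>u(y)$ and $\xi(x)\ge\xi(y)$) and a part controlled by $u^{p+r-2}|\xi(x)-\xi(y)|^p$ plus tail terms, both uniformly in $\delta$. Fatou's lemma on the non-negative part and dominated convergence on the rest then give the inequality. The direction ``$\ge$'' and the factor $1/\gamma$ from $(K_2)$ arise exactly because Fatou only yields an inequality and because $K$ is replaced by $|x-y|^{-N-ps}$ using $(K_2)$.
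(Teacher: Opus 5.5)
There is a sign error in your evolutive term, and it reverses the inequality. Since $\partial_\tau u_h=(u-u_h)/h$, the remainder in $H_1=-\int_0^{t_0}\!\int u\,\partial_\tau\varphi$ is $-\int_0^{t_0}\!\int(u-u_h)\,\partial_\tau\varphi$. Its main piece therefore enters $H_1$ as
\[
-\frac{r-1}{h}\int_0^{t_0}\!\int_{\tilde B}(u-u_h)^2(u_h+\delta)^{r-2}\xi^p\psi_\eps\,dx\,d\tau\le 0 .
\]
You computed $\int(u-u_h)\,\partial_\tau[(u_h+\delta)^{r-1}]\xi^p\psi_\eps$ and lost the minus sign in front of it.

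To reach $0\ge\frac1r\int u^r\xi^p\,dx\big|_0^{t_0}+\dots$ from $0=H_1+H_2$, you need a \emph{lower} bound on $H_1$. A non-positive term cannot be discarded for that purpose; dropping it only gives the reverse inequality. Showing instead that the term tends to $0$ would require $h^{-1}\|u-u_h\|_{L^2}^2\to0$. That is time regularity of $u$ which these weak solutions do not have, and avoiding it is the whole point of the appendix.

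This is exactly why the paper uses $u_{\bar h}$. With $\partial_\tau u_{\bar h}=(u_{\bar h}-u)/h$ and the splitting $-\iint u\,\partial_\tau\varphi=\iint\partial_\tau u_{\bar h}\,\varphi+\iint(u_{\bar h}-u)\,\partial_\tau\varphi$, the main piece becomes $+\frac{r-1}{h}\iint(u_{\bar h}-u)^2(u_{\bar h}+\nu)^{r-2}\xi^p\psi_\eps\ge0$, and it can legitimately be dropped.

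If you preferred $u_h$ to stay inside the region where $u$ is non-negative and bounded, use the backward mollifier on $(0,t_0)$, namely $\frac1h\int_\tau^{t_0}e^{(\tau-s)/h}u(x,s)\,ds$. It only sees $u$ in $\hat B\times(0,t_0)$ and has the same derivative formula. With this swap the rest of your evolutive argument goes through: the exact derivative of $G_\delta$, the averages over $(0,\eps)$ and $(t_0-\eps,t_0)$, and the limits in $h$, $\eps$ and $\delta$.

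There are also smaller issues in the diffusive term.
\begin{enumerate}
\item Your splitting into $B\times B$ and $\hat B\times B^c$ is copied from Lemma \ref{mol}, but it does not fit here. In this lemma $\xi=1$ on $B$ and ${\rm supp}\,\xi\subset\tilde B$, so there is no gap between the support of $\xi$ and $B^c$, and the kernel is not bounded away from the diagonal on $\hat B\times B^c$. Split instead into $\hat B\times\hat B$ and $\tilde B\times\hat B^c$, as the paper does.
\item Your treatment of $\delta\to0$ for $1<r<2$ is more careful than the paper's. You use Fatou on the part with a definite sign and domination on the part carrying $\xi^p(x)-\xi^p(y)$, which is integrable by local boundedness and H\"older against the $W^{s,p}$ seminorm. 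The paper simply lets $\nu\to0$, although its constant $K_\nu=\nu^{r-2}$ blows up.
\item $(K_2)$ lets you replace $K$ by $C_1|x-y|^{-N-ps}$ only where the integrand is non-negative, and by $C_2|x-y|^{-N-ps}$ where it is non-positive. So a single factor $1/\gamma$ in front of a sign-changing integral is not justified by $(K_2)$ alone. What you actually obtain is a split lower bound, with the good part weighted by $1/\gamma$ and the bad part by $\gamma$. That is all Lemma \ref{bl} uses, and the paper is equally terse on this point.
\end{enumerate}
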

\begin{proof}
The argument closely follows that of Lemma \ref{mol}, so we will omit some steps. Fix $\eps\in(0,t_0/2)$ and define the Lipschitz mapping $\psi_\eps:[0,T]\to\R$ as in Lemma \ref{mol}, with $t$ replaced by $t_0$. Also fix $h\in(0,1)$, $\nu>0$, and set for all $(x,\tau)\in\R^N\times(0,T)$
\[\varphi_h^\eps(x,\tau) = (u_{\bar h}(x,\tau)+\nu)^{r-1}\xi^p(x)\psi_\eps(\tau),\]
where the parameter $\nu$ is included in order to avoid singularities in the case $1<r<2$. By Proposition \ref{con}, the function $\varphi_h^\eps\in W^{1,2}_0(0,t_0,L^2(\tilde B))\cap L^p(0,t_0,W^{s,p}_0(\tilde B))$ can be chosen as a test for \eqref{eq} in $\tilde B\times(0,t_0)$ (see Definition \ref{lws}), producing
\begin{align}\label{mob1}
0 &= -\int_0^{t_0}\int_{\tilde B}u(x,\tau)\frac{\partial\varphi_h^\eps}{\partial\tau}(x,\tau)\,dx\,d\tau \\
\nonumber &+ \int_0^{t_0}\iint_{\R^N\times\R^N}|u(x,\tau)-u(y,\tau)|^{p-2}(u(x,\tau)-u(y,\tau))(\varphi_h^\eps(x,\tau)-\varphi_h^\eps(y,\tau))K(x,y,\tau)\,dx\,dy\,d\tau \\
\nonumber &= H_1+H_2.
\end{align}
First we split $H_1$ as follows:
\begin{align*}
H_1 &= -\int_0^{t_0}\int_{\tilde B}u_{\bar h}(x,\tau)\frac{\partial\varphi_h^\eps}{\partial\tau}(x,\tau)\,dx\,d\tau+\int_0^{t_0}\int_{\tilde B}(u_{\bar h}(x,\tau)-u(x,\tau))\frac{\partial\varphi_h^\eps}{\partial\tau}(x,\tau)\,dx\,d\tau \\
&= H_3+H_4.
\end{align*}
Repeated integration by parts and the definition of $\psi_\eps$ allows for the following rephrasing of $H_3$:
\[H_3 = \frac{1}{r\eps}\int_{t_0-\eps}^{t_0}\int_{\tilde B}(u_{\bar h}(x,\tau)+\nu)^r\xi^p(x)\,dx\,d\tau-\frac{1}{r\eps}\int_0^\eps\int_{\tilde B}(u_{\bar h}(x,\tau)+\nu)^r\xi^p(x)\,dx\,d\tau.\]
Letting $\eps,h,\nu\to 0$ gives
\[\lim_{\eps,h,\nu\to 0}\,H_3 = \frac{1}{r}\int_{\tilde B}u^r(x,t_0)\xi^p(x)\,dx-\frac{1}{r}\int_{\tilde B}u^r(x,0)\xi^p(x)\,dx.\]
To estimate $H_4$, we apply Proposition \ref{con} \ref{con1}:
\begin{align*}
H_4 &= \int_0^{t_0}\int_{\tilde B}(u_{\bar h}(x,\tau)-u(x,\tau))\frac{\partial}{\partial\tau}(u_{\bar h}(x,\tau)+\nu)^{r-1}\xi^p(x)\psi_\eps(\tau)\,dx\,d\tau \\
&+ \int_0^{t_0}\int_{\tilde B}(u_{\bar h}(x,\tau)-u(x,\tau))(u_{\bar h}(x,\tau)+\nu)^{r-1}\xi^p(x)\psi'_\eps(\tau)\,dx\,d\tau \\
&= (r-1)\int_0^{t_0}\int_{\tilde B}\frac{(u_{\bar h}(x,\tau)-u(x,\tau))^2}{h}(u_{\bar h}(x,\tau)+\nu)^{r-2}\xi^p(x)\psi_\eps(\tau)\,dx\,d\tau \\
&+ \frac{1}{\eps}\int_0^\eps\int_{\tilde B}(u_{\bar h}(x,\tau)-u(x,\tau))(u_{\bar h}(x,\tau)+\nu)^{r-1}\xi^p(x)\,dx\,d\tau \\
&- \frac{1}{\eps}\int_{t_0-\eps}^{t_0}\int_{\tilde B}(u_{\bar h}(x,\tau)-u(x,\tau))(u_{\bar h}(x,\tau)+\nu)^{r-1}\xi^p(x)\,dx\,d\tau.
\end{align*}
The first integral above is non-negative, while the others tend to $0$ as $h\to 0$, due to convergence of $u_{\bar h}$ to $u$ in $L^1(\tilde B\times(0,t_0))$ (Proposition \ref{con} \ref{con2} with $q=1$). In conclusion we have
\[\liminf_{\eps,h,\nu\to 0}\,H_4 \ge 0,\]
which along with the previous relation gives
\beq\label{mob2}
\limsup_{\eps,h,\nu\to 0}\,H_1 \ge \frac{1}{r}\int_{\tilde B}u^r(x,\tau)\xi^p(x)\,dx\Big|_0^{t_0}.
\eeq
Now for the diffusive term $H_2$, with a similar argument as in Lemma \ref{mol}. First we define $G$ as in Lemma \ref{mol}, then we set for all $(x,y,\tau)\in\R^N\times\R^N\times(0,T)$
\[F_h(x,y,\tau) = (u_{\bar h}(x,\tau)+\nu)^{r-1}\xi^p(x)-(u_{\bar h}(x,\tau)+\nu)^{r-1}\xi^p(y),\]
\[F(x,y,\tau) = (u(x,\tau)+\nu)^{r-1}\xi^p(x)-(u(x,\tau)+\nu)^{r-1}\xi^p(y).\]
Clearly
\[\int_0^{t_0}\iint_{\hat B\times\hat B}\frac{|G(x,y,\tau)|^\frac{p}{p-1}}{|x-y|^{N+ps}}\,dx\,dy\,d\tau \le \|u\|_{L^p(0,t_0,W^{s,p}(\hat B))}^p,\]
while by Proposition \ref{con} we have a.e.\ in $\R^N\times\R^N\times(0,T)$
\[\lim_{h\to 0}\,F_h(x,y,\tau) = F(x,y,\tau).\]
Also, by $(K_2)$, we have for some $\gamma>1$ depending on the data
\[\liminf_{\eps\to 0}H_2 \ge \frac{1}{\gamma}\int_0^{t_0}\iint_{\R^N\times\R^N}\frac{G(x,y,\tau)F_h(x,y,\tau)}{|x-y|^{N+ps}}\,dx\,dy\,d\tau.\]
By symmetry and the properties of $\xi$ we have
\begin{align*}
& \int_0^{t_0}\iint_{\R^N\times\R^N}\frac{G(x,y,\tau)}{|x-y|^{N+ps}}\big[F_h(x,y,\tau)-F(x,y,\tau)\big]\,dx\,dy\,d\tau \\
&= \int_0^{t_0}\iint_{\hat B\times\hat B}\frac{G(x,y,\tau)}{|x-y|^{N+ps}}\big[F_h(x,y,\tau)-F(x,y,\tau)\big]\,dx\,dy\,d\tau \\
&+ 2\int_0^{t_0}\iint_{\tilde B\times\hat B^c}\frac{G(x,y,\tau)}{|x-y|^{N+ps}}\big[(u_{\bar h}(x,\tau)+\nu)^{r-1}-(u(x,\tau)+\nu)^{r-1}\big]\xi^p(x)\,dx\,dy\,d\tau \\
&= H_5+H_6.
\end{align*}
We first consider $H_5$, noting that by boundedness of $\xi$
\begin{align*}
\int_0^{t_0}\iint_{\hat B\times\hat B}\frac{|F_h(x,y,\tau)|^p}{|x-y|^{N+ps}}\,dx\,dy\,d\tau &\le \gamma\int_0^{t_0}\iint_{\hat B\times\hat B}\frac{|(u_{\bar h}(x,\tau)+\nu)^{r-1}-(u_{\bar h}(y,\tau)+\nu)^{r-1}|^p}{|x-y|^{N+ps}}\,dx\,dy\,d\tau \\
&+ \gamma\int_0^{t_0}\iint_{\hat B\times\hat B}(u_{\bar h}(y,\tau)+\nu)^{p(r-1)}\frac{|\xi^p(x)-\xi^p(y)|^p}{|x-y|^{N+ps}}\,dx\,dy\,d\tau \\
&= H_7+H_8.
\end{align*}
Recalling that $u$ is locally bounded in the present framework, we set
\beq\label{knu}
K_\nu = \begin{cases}
\nu^{r-2} & \text{if $1<r<2$} \\
\big(\|u\|_{L^\infty(\hat B\times(0,t_0))}+\nu\big)^{r-2} & \text{if $r\ge 2$.}
\end{cases}
\eeq
To estimate $H_7$, we apply Lagrange's rule and Proposition \ref{con} \ref{con3}:
\begin{align*}
H_7 &\le \gamma\int_0^{t_0}\iint_{\hat B\times\hat B}\max\big\{(u_{\bar h}(x,\tau)+\nu)^{r-2},\,(u_{\bar h}(y,\tau)+\nu)^{r-2}\big\}^p\frac{|u_{\bar h}(x,\tau)-u_{\bar h}(y,\tau)|^p}{|x-y|^{N+ps}}\,dx\,dy\,d\tau \\
&\le \gamma K_\nu^p\int_0^{t_0}\iint_{\hat B\times\hat B}\frac{|u_{\bar h}(x,\tau)-u_{\bar h}(y,\tau)|^p}{|x-y|^{N+ps}}\,dx\,dy\,d\tau \\
&\le \gamma K_\nu^p\|u\|_{L^p(0,t_0,W^{s,p}(\hat B))}^p.
\end{align*}
For $H_8$, we act mainly on $\xi$:
\begin{align*}
H_8 &\le \gamma\int_0^{t_0}\iint_{\hat B\times\hat B}(u_{\bar h}(y,\tau)+\nu)^{p(r-1)}\max\big\{\xi^{p-1}(x),\,\xi^{p-1}(y)\big\}^p\frac{|\xi(x)-\xi(y)|^p}{|x-y|^{N+ps}}\,dx\,dy\,d\tau \\
&\le \frac{\gamma}{\sigma^p\rho^p}\int_0^{t_0}\iint_{\hat B\times\hat B}\frac{(u_{\bar h}(y,\tau)+\nu)^{p(r-1)}}{|x-y|^{N+ps-p}}\,dx\,dy\,d\tau \\
&\le \frac{\gamma}{\sigma^p\rho^p}\Big[\int_0^{t_0}\int_{\hat B}(u_{\bar h}(y,\tau)+\nu)^{p(r-1)}\,dy\,d\tau\Big]\Big[\sup_{y\in\hat B}\,\int_{\hat B}\frac{dx}{|x-y|^{N+ps-p}}\Big] \\
&\le \frac{\gamma t_0\rho^{N-ps}}{\sigma^p}\big[\|u\|_{L^\infty(\hat B\times(0,t_0))}+\nu\big]^{p(r-1)}.
\end{align*}
Combining the previous bounds, we can find $\gamma>0$ depending on $\nu$, but not on $h$, s.t.\
\[\int_0^{t_0}\iint_{\hat B\times\hat B}\frac{|F_h(x,y,\tau)|^p}{|x-y|^{N+ps}}\,dx\,dy\,d\tau \le \gamma.\]
By reflexivity, passing to a sequence $h_n\to 0$, we get
\beq\label{mob3}
\limsup_{h\to 0}\,H_5 \ge 0.
\eeq
To estimate $H_6$, we first note that for all $(x,y)\in\tilde B\times\hat B^c$
\[|x-y| \ge \frac{\sigma}{2(1+\sigma)}|y-x_0|.\]
Using Lagrange's rule, subadditivity and the constant $K_\nu$ defined in \eqref{knu}, we have:
\begin{align*}
|H_6| &\le \gamma\int_0^{t_0}\iint_{\tilde B\times\hat B^c}\frac{|G(x,y,\tau)|}{|x-y|^{N+ps}}\Big|(u_{\bar h}(x,\tau)+\nu)^{r-1}-(u(x,\tau)+\nu)^{r-1}\Big|\,dx\,dy\,d\tau \\
&\le \gamma K_\nu\Big[\int_0^{t_0}\int_{\tilde B}|u_{\bar h}(x,\tau)-u(x,\tau)|\,dx\,d\tau\Big]\Big[\sup_{(x,\tau)\in\tilde B\times(0,t_0)}\,\int_{\hat B^c}\frac{|u(x,\tau)-u(y,\tau)|^{p-1}}{|x-y|^{N+ps}}\,dy\Big] \\
&\le \frac{\gamma K_\nu}{\sigma^{N+ps}}\|u_{\bar h}-u\|_{L^1(\tilde B\times(0,t_0))}\Big[\|u\|_{L^\infty(\tilde B\times(0,t_0))}^{p-1}\int_{\hat B^c}\frac{dy}{|y-x_0|^{N+ps}}+\sup_{0<\tau<t_0}\,\int_{\hat B^c}\frac{|u(y,\tau)|^{p-1}}{|y-x_0|^{N+ps}}\,dy\Big] \\
&\le \frac{\gamma K_\nu}{\sigma^{N+ps}\rho^{ps}}\|u_{\bar h}-u\|_{L^1(\tilde B\times(0,t_0))}\Big[\|u\|_{L^\infty(\tilde B\times(0,t_0))}^{p-1}+{\rm Tail}(u,x_0,(1+\sigma)\rho,0,t_0)^{p-1}\Big]
\end{align*}
(where again we have used boundedness of $u$). By Proposition \ref{con} \ref{con2} (with $q=1$) the latter tends to $0$ as $h\to 0$, hence
\beq\label{mob4}
\lim_{h\to 0}\,H_6 = 0.
\eeq
By \eqref{mob3} and \eqref{mob4} we obtain
\[\limsup_{h\to 0}\int_0^{t_0}\iint_{\R^N\times\R^N}\frac{G(x,y,\tau)}{|x-y|^{N+ps}}\big[F_h(x,y,\tau)-F(x,y,\tau)\big]\,dx\,dy\,d\tau \ge 0,\]
which in turn implies
\begin{align}\label{mob5}
& \limsup_{\eps,h,\nu\to 0}\,H_2 \\
\nonumber &\ge \frac{1}{\gamma}\int_0^{t_0}\iint_{\R^N\times\R^N}\frac{|u(x,\tau)-u(y,\tau)|^{p-2}(u(x,\tau)-u(y,\tau))}{|x-y|^{N+ps}}\Big[u^{r-1}(x,\tau)\xi^p(x)-u^{r-1}(y,\tau)\xi^p(y)\Big]\,dx\,dy\,d\tau.
\end{align}
Passing to the limit in \eqref{mob1} and using \eqref{mob2} and \eqref{mob5}, we get
\begin{align*}
0 &\ge \frac{1}{r}\int_{\tilde B}u^r(x,\tau)\xi^p(x)\,dx\Big|_0^{t_0} \\
&+ \frac{1}{\gamma}\int_0^{t_0}\iint_{\R^N\times\R^N}\frac{|u(x,\tau)-u(y,\tau)|^{p-2}(u(x,\tau)-u(y,\tau))}{|x-y|^{N+ps}}\Big[u^{r-1}(x,\tau)\xi^p(x)-u^{r-1}(y,\tau)\xi^p(y)\Big]\,dx\,dy\,d\tau,
\end{align*}
which concludes the proof.
\end{proof}

\noindent
We finally include the formal derivation of inequalities \eqref{fte3} and \eqref{fte5} in the proof of Theorem \ref{fte}:

\begin{lemma}\label{mot}
Let $\Omega$, $u_0$, $u$, and $q$ be as in Theorem \ref{fte}:
\begin{enumroman}
\item\label{mot1} if $1<p<p_c$, then there exists $\gamma>0$ depending on $N,p,s$ s.t.\ for all $\psi\in W^{1,2}_0(0,T)$, $\psi\ge 0$ in $(0,T)$
\[\int_0^T\Big[-\|u(\cdot,\tau)\|_{L^q(\Omega)}^q\psi'(\tau)+\frac{C_1}{\gamma}\|u(\cdot,\tau)\|_{L^q(\Omega)}^{p+q-2}\psi(\tau)\Big]\,d\tau \le 0;\]
\item\label{mot2} if $p_c\le p<2$, then there exists $\gamma>0$ depending on $N,p,s$ s.t.\ for all $\psi\in W^{1,2}_0(0,T)$, $\psi\ge 0$ in $(0,T)$
\[\int_0^T\Big[-\|u(\cdot,\tau)\|_{L^2(\Omega)}^2\psi'(\tau)+\frac{C_1}{\gamma}|\Omega|^{-\frac{\lambda_2}{2N}}\|u(\cdot,\tau)\|_{L^2(\Omega)}^p\psi(\tau)\Big]\,d\tau \le 0.\]
\end{enumroman}
\end{lemma}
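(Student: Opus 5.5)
The plan is to follow the scheme of Lemmas \ref{mol} and \ref{mob}: test Definition \ref{scp} with a Steklov-type regularization of $u^{q-1}\psi$ (resp.\ $u\psi$) and pass to the limit. We first treat \ref{mot1}, where $q>2$. Fix $\psi\in W^{1,2}_0(0,T)$ with $\psi\ge0$, $h\in(0,1)$, and take
\[\varphi_h(x,\tau)=(u_{\bar h}(x,\tau))^{q-1}\psi(\tau).\]
Since $u$ is bounded and $u(\cdot,\tau)\in W^{s,p}_0(\Omega)$, the function $\varphi_h$ vanishes outside $\Omega$, and by Proposition \ref{con} it lies in $W^{1,2}(0,T,L^2(\Omega))\cap L^p(0,T,W^{s,p}_0(\Omega))$. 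Boundedness is what makes $v\mapsto v^{q-1}$ Lipschitz on the range of $u$. Because $\psi(0)=\psi(T)=0$, the boundary terms disappear.

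For the evolutive term, write $u=u_{\bar h}+(u-u_{\bar h})$. The principal piece equals $-\int\!\int u_{\bar h}\,\partial_\tau\varphi_h$. Using $\partial_\tau u_{\bar h}=(u_{\bar h}-u)/h$ and integrating by parts, it becomes $-\frac1q\int_0^T\|u_{\bar h}\|_q^q\psi'\,d\tau$. The remainder has the sign structure used for $H_4$ in Lemma \ref{mol}: it equals $(q-1)\int\!\int (u_{\bar h}-u)^2h^{-1}u_{\bar h}^{q-2}\psi$ plus a term containing $\psi'$ that tends to zero in $L^1$. The first of these is nonnegative, so it has the right sign to be dropped. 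Letting $h\to0$ gives the lower bound $-\frac1q\int_0^T\|u\|_q^q\psi'$.

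The diffusion term is handled as in Lemma \ref{mob}, except that everything now takes place in $\Omega$ with zero exterior values, so no tail appears. We bound the $W^{s,p}$ seminorm of $F_h=u_{\bar h}^{q-1}(x)-u_{\bar h}^{q-1}(y)$ uniformly in $h$, using $\|u\|_\infty$, and then use weak convergence to pass to the limit as $h\to0$. This yields
\[\int_0^T\psi\iint |u(x)-u(y)|^{p-2}(u(x)-u(y))(u^{q-1}(x)-u^{q-1}(y))K\,dx\,dy\,d\tau.\]
Next we apply $(K_2)$ together with Lemma \ref{alg} \ref{alg1}, which bounds this from below by
\[\frac{C_1}{\gamma}\int_0^T\psi\,[u^{(p+q-2)/p}]_{W^{s,p}(\R^N)}^p.\]
Then the Sobolev inequality \eqref{sob} applied to $v=u^{(p+q-2)/p}\in W^{s,p}_0(\Omega)$ gives $[v]^p\ge\gamma^{-1}\|v\|_{p^*_s}^p$. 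The choice $q=N(2-p)/(ps)$ is designed so that $\frac{(p+q-2)}{p}\,p^*_s=q$. With this choice, $\|v\|_{p^*_s}^p=\|u\|_q^{p+q-2}$, which is the exponent in the statement. Multiplying through by $q$ proves \ref{mot1}.

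For \ref{mot2} we use the test function $u_{\bar h}\psi$, which needs no boundedness, and the same argument gives $-\frac12\int\|u\|_2^2\psi'+C_1\gamma^{-1}\int[u]^p\psi\le0$. By \eqref{sob} and Hölder's inequality, $[u]^p\ge\gamma^{-1}\|u\|_{p^*_s}^p\ge\gamma^{-1}|\Omega|^{-\lambda_2/(2N)}\|u\|_2^p$. The Hölder step uses $p^*_s\ge2$, which holds precisely because $p\ge p_c$. To confirm the power of $|\Omega|$: $\frac1{2}-\frac1{p^*_s}=\frac{\lambda_2}{2Np}$, and raising to the power $p$ gives $|\Omega|^{-\lambda_2/(2N)}$.

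The main obstacle is the limit in the evolutive term. Getting the favourable sign of the remainder requires the backward mollifier $u_{\bar h}$, with the nonlinearity applied to it, exactly as in Lemma \ref{mol}. We also need $\|u_{\bar h}(\cdot,\tau)\|_q^q\to\|u(\cdot,\tau)\|_q^q$ in $L^1(0,T)$, which follows from $L^q$ convergence and boundedness via Proposition \ref{con} \ref{con2}.
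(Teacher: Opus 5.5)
Your proposal is correct and follows the paper's proof essentially step by step. In (i) you use the same test function $u_{\bar h}^{q-1}\psi$ and the same splitting of the evolutive term, whose remainder $(q-1)h^{-1}(u_{\bar h}-u)^2u_{\bar h}^{q-2}\psi$ is nonnegative; you then bound $F_h$ in $W^{s,p}$ uniformly in $h$, pass to the limit by weak compactness, and finish with Lemma \ref{alg} \ref{alg1} and Sobolev's inequality via $\frac{p+q-2}{p}p^*_s=q$. In (ii) you set $q=2$ and add the H\"older step, exactly as the paper does.

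One detail of yours is actually cleaner than the paper. You keep $K$ through the weak limit and apply $(K_2)$ only to the limit integrand, which is nonnegative. The paper instead applies $(K_2)$ to $G F_h$ before the limit, and that product need not be nonnegative.
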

\begin{proof}
First recall that $u$ is locally essentially bounded in $\Omega_T$. We consider \ref{mot1}. We already know that $q>2$. Fix $h\in(0,1)$ and set for all $(x,\tau)\in\R^N\times(0,T)$
\[\varphi_h(x,\tau) = u_{\bar h}^{q-1}(x,\tau)\psi(\tau),\]
so that $\varphi_h\in W^{1,2}_0(0,T,L^2(\Omega))\cap L^p(0,T,W^{s,p}_0(\Omega))$ can be used as a test in Definition \ref{scp} \ref{scp1}, giving
\begin{align}\label{mot3}
0 &= \int_\Omega u(x,\tau)\varphi_h(x,\tau)\,dx\Big|_0^T-\int_0^T\int_\Omega u(x,\tau)\frac{\partial\varphi_h}{\partial\tau}(x,\tau)\,dx\,d\tau \\
\nonumber &+ \int_0^T\iint_{\R^N\times\R^N}|u(x,\tau)-u(y,\tau)|^{p-2}(u(x,\tau)-u(y,\tau))(\varphi_h(x,\tau)-\varphi_h(y,\tau))K(x,y,\tau)\,dx\,dy\,d\tau \\
\nonumber &= H_1+H_2.
\end{align}
We focus on the evolutive term $H_1$. Noting that $\varphi_h(\cdot,0)=\varphi_h(\cdot,T)=0$, we have
\begin{align*}
H_1 &= -\int_0^T\int_\Omega u_{\bar h}(x,\tau)\frac{\partial\varphi_h}{\partial\tau}(x,\tau)\,dx\,d\tau+\int_0^T\int_\Omega (u_{\bar h}(x,\tau)-u(x,\tau))\frac{\partial\varphi_h}{\partial\tau}(x,\tau)\,dx\,d\tau \\
&= H_3+H_4.
\end{align*}
Using Proposition \ref{con} \ref{con1} and the integration by parts formula (twice), we have
\begin{align*}
H_3 &= \int_0^T\int_\Omega\frac{\partial u_{\bar h}}{\partial\tau}(x,\tau)u_{\bar h}^{q-1}(x,\tau)\psi(\tau)\,dx\,d\tau \\
&= \frac{1}{q}\int_0^T\int_\Omega\frac{\partial u_{\bar h}^q}{\partial\tau}(x,\tau)\psi(\tau)\,dx\,d\tau \\
&= -\frac{1}{q}\int_0^T\int_\Omega u_{\bar h}^q(x,\tau)\psi'(\tau)\,dx\,d\tau.
\end{align*}
Therefore, by Proposition \ref{con} \ref{con2} we have
\[\lim_{h\to 0}\,H_3 = -\frac{1}{q}\int_0^T\int_\Omega u^q(x,\tau)\psi'(\tau)\,dx\,d\tau.\]
Besides, recalling that $q>2$ and applying Proposition \ref{con} \ref{con1}, we get
\begin{align*}
H_4 &= (q-1)\int_0^T\int_\Omega(u_{\bar h}(x,\tau)-u(x,\tau))u_{\bar h}^{q-2}(x,\tau)\frac{\partial u_{\bar h}}{\partial\tau}(x,\tau)\psi(\tau)\,dx\,d\tau \\
&+ \int_0^T\int_\Omega(u_{\bar h}(x,\tau)-u(x,\tau))u_{\bar h}^{q-1}(x,\tau)\psi'(\tau)\,dx\,d\tau \\
&= (q-1)\int_0^T\int_\Omega\frac{(u_{\bar h}(x,\tau)-u(x,\tau))^2}{h}u_{\bar h}^{q-2}(x,\tau)\psi(\tau)\,dx\,d\tau \\
&+ \int_0^T\int_\Omega u_{\bar h}^q(x,\tau)\psi'(\tau)\,dx\,d\tau-\int_0^T\int_\Omega u_{\bar h}^{q-1}(x,\tau)u(x,\tau)\psi'(\tau)\,dx\,d\tau.
\end{align*}
The first integral is non-negative, while the second and third integrals converge to the same limit as $h\to 0$ by Proposition \ref{con} \ref{con2}, so we have
\[\liminf_{h\to 0}\,H_4 \ge 0.\]
Combining the estimates above, we get
\beq\label{mot4}
\liminf_{h\to 0}\,H_1 \ge -\frac{1}{q}\int_0^T\|u(\cdot,\tau)\|_{L^q(\Omega)}^q\psi'(\tau)\,d\tau.
\eeq
We deal with the diffusive term $H_2$ as in Lemma \ref{mol} (this case is in fact simpler). Define $G$ as in Lemma \ref{mol}, and set for all $(x,y,t)\in\R^N\times\R^N\times(0,T)$
\[F_h(x,y,t) = (u_{\bar h}^{q-1}(x,t)-u_{\bar h}^{q-1}(y,t))\psi(t),\]
\[F(x,y,t) = (u^{q-1}(x,t)-u^{q-1}(y,t))\psi(t).\]
As usual we have $F_h(x,y,t)\to F(x,y,t)$ a.e.\ in $\R^N\times\R^N\times(0,T)$, as $h\to 0$. Also, by positivity and $(K_2)$, we have
\[H_2 \ge C_1\int_0^T\iint_{\R^N\times\R^N}\frac{G(x,y,\tau)F_h(x,y,\tau)}{|x-y|^{N+ps}}\,dx\,dy\,d\tau.\]
The fractional Poincar\'e inequality implies
\[\int_0^T\iint_{\R^N\times\R^N}\frac{|G(x,y,t)|^\frac{p}{p-1}}{|x-y|^{N+ps}}\,dx\,dy\,d\tau \le \|u\|_{L^p(0,T,W^{s,p}_0(\Omega))}^p.\]
We use boundedness of $\psi$, $q>2$, and Proposition \ref{con} \ref{con2} \ref{con3} to get
\begin{align*}
\int_0^T\iint_{\R^N\times\R^N}\frac{|F_h(x,y,\tau)|^p}{|x-y|^{N+ps}}\,dx\,dy\,d\tau & \le \gamma\int_0^T\iint_{\R^N\times\R^N}\frac{|u_{\bar h}^{q-1}(x,\tau)-u_{\bar h}^{q-1}(y,\tau)|^p}{|x-y|^{N+ps}}\,dx\,dy\,d\tau \\
&\le \gamma\int_0^T\iint_{\R^N\times\R^N}\max\big\{u_{\bar h}^{q-2}(x,\tau),\,u_{\bar h}^{q-2}(y,\tau)\big\}^p\frac{|u_{\bar h}(x,\tau)-u_{\bar h}(y,\tau)|^p}{|x-y|^{N+ps}}\,dx\,dy\,d\tau \\
&\le \gamma\|u\|_{L^\infty(\Omega_T)}^{p(q-2)}\|u\|_{L^p(0,T,W^{s,p}_0(\Omega))}^p.
\end{align*}
By reflexivity, along a sequence $h_n\to 0$ we have
\[\lim_n\,\int_0^T\iint_{\R^N\times\R^N}\frac{G(x,y,\tau)F_{h_n}(x,y,\tau)}{|x-y|^{N+ps}}\,dx\,dy\,d\tau = \int_0^T\iint_{\R^N\times\R^N}\frac{G(x,y,\tau)F(x,y,\tau)}{|x-y|^{N+ps}}\,dx\,dy\,d\tau.\]
Passing to the limit in $H_2$, using Lemma \ref{alg} \ref{alg1}, and Sobolev's inequality \eqref{sob}, we get
\begin{align}\label{mot5}
\limsup_{h\to 0}\,H_2 &\ge C_1\int_0^T\iint_{\R^N\times\R^N}\frac{|u(x,\tau)-u(y,\tau)|^{p-2}(u(x,\tau)-u(y,\tau))}{|x-y|^{N+ps}}(u^{q-1}(x,\tau)-u^{q-1}(y,\tau))\psi(\tau)\,dx\,dy\,d\tau \\
\nonumber &\ge \frac{C_1}{\gamma}\int_0^T\iint_{\R^N\times\R^N}\Big|u^\frac{p+q-2}{p}(x,\tau)-u^\frac{p+q-2}{p}(y,\tau)\Big|^p\,\frac{dx\,dy}{|x-y|^{N+ps}}\psi(\tau)d\tau \\
\nonumber &\ge \frac{C_1}{\gamma}\int_0^T\Big[\int_\Omega u^{\frac{p+q-2}{p}p^*_s}(x,\tau)\,dx\Big]^\frac{p}{p^*_s}\psi(\tau)\,d\tau \\
\nonumber &= \frac{C_1}{\gamma}\int_0^T\|u(\cdot,\tau)\|_{L^q(\Omega)}^{p+q-2}\psi(\tau)\,d\tau,
\end{align}
where we have used the relation
\[\bigg( \frac{p+q-2}{p}\bigg) p^*_s = q.\]
Passing to the limit as $h\to 0$ in \eqref{mot3} and using \eqref{mot4} and \eqref{mot5}, we arrive at
\[0 \ge -\frac{1}{q}\int_0^T\|u(\cdot,\tau)\|_{L^q(\Omega)}^q\psi'(\tau)\,dx\,d\tau+\frac{C_1}{\gamma}\int_0^T\|u(\cdot,\tau)\|_{L^q(\Omega)}^{p+q-2}\psi(\tau)\,d\tau,\]
which yields \ref{mot1} with a new constant $\gamma>0$ depending on $N,p,s$. The argument for \ref{mot2} is entirely analogous, just replacing $q$ with $2$ and taking care of the measure-type multiplier.
\end{proof}

\vskip4pt
\noindent
{\bf Acknowledgement.} The authors are members of GNAMPA (Gruppo Nazionale per l'Analisi Matematica, la Probabilit\`a e le loro Applicazioni) of INdAM (Istituto Nazionale di Alta Matematica 'Francesco Severi'), and are supported by the research project {\em Regolarit\`a ed esistenza per operatori anisotropi} (GNAMPA, CUP E5324001950001). In addition, F.C.\ and A.I.\ are supported by the research project {\em Partial Differential Equations and their role in understanding natural phenomena} (Fondazione di Sardegna 2023 CUP F23C25000080007), while S.C.\ acknowledges the foundings of PNR (MUR) 2021-2027 and the Department of Mathematics of the University of Bologna. Finally we acknowledge the helpful suggestions and constructive critism of the referees, that has helped to improve the quality of the present work.

\end{document}